\title{Universal vector and matrix optimal transport}
\author{Boris Khesin\thanks{
    Department of Mathematics,
    University of Toronto, Canada;
    e-mail: \texttt{khesin@math.toronto.edu}}
     and Klas Modin\thanks{
     Department of Mathematical Sciences, Chalmers University of Technology and University of Gothenburg, Sweden;
    e-mail:  \texttt{klas.modin@chalmers.se}}
    }
\date{}
\theoremstyle{theorem}
\newtheorem{theorem}{Theorem}[section]
\newtheorem{lemma}[theorem]{Lemma}
\newtheorem{proposition}[theorem]{Proposition}
\theoremstyle{definition}
\newtheorem{definition}[theorem]{Definition}
\newtheorem{remark}[theorem]{Remark}
\newtheorem{example}[theorem]{Example}
\newtheorem{problems}[theorem]{Problems}
\numberwithin{equation}{section}
\newcommand{\llangle}{\langle\hspace{-0.5ex}\langle}
\newcommand{\rrangle}{\rangle\hspace{-0.5ex}\rangle}
\begin{document}

\maketitle

\begin{abstract}
In this paper we propose  a gauge-theoretic approach to the problems of optimal mass transport for vector and matrix densities. This resolves both the issues of positivity and action transitivity constraints.
Bures-type metrics on the corresponding semi-direct product groups of diffeomorphisms and
gauge transformations are related to Wasserstein-type metrics on vector half-densities and matrix densities 
via Riemannian submersions. 
We also describe their relation to Poisson geometry and demonstrate how the  momentum map allows one 
to prove the Riemannian submersion properties. The obtained  geodesic equations turn out to be vector versions of the Burgers equations.
\end{abstract}

\tableofcontents

\section{Introduction}

The geometric setting of optimal transport relies on the differential geometry of diffeomorphism groups. 
In particular, the problem of moving one mass to another by a diffeomorphism while minimizing a certain cost can be understood as a construction of geodesics for an appropriate metric in the space of normalized densities; see, for example, Benamou and Brenier~\cite{BeBr2000} or Villani~\cite{Vi2009} and references therein. Many applications include similar problems where one needs to measure the proximity between different shapes; see, for example, Michor and Mumford~\cite{MiMu2007} or Younes~\cite{Yo2010}. 
However, the diffeomorphism action by itself does not allow one to change the total mass of the density.
One arrives at the problem of constructing a natural extension of the action which would allow one to connect in the most economical way densities of different total masses, thus leading to the theory of \emph{unbalanced optimal transport}~\cite{Chizat2015}. 
Furthermore, other applications, for instance to the theory of spin liquids, require distances between densities that are not scalar- but vector- or matrix-valued. The latter problems are the main subject of this work.

There are several obstructions even to properly posing the problem of the mass transport for vector/matrix densities. Namely, a unique diffeomorphism might not allow one to move simultaneously all coordinate densities in a required way. Moreover, another major difficulty is the positivity constraint for each coordinate density. This constraint is a serious obstacle for many previously suggested scenarios for admissible paths between densities.

In this paper we resolve these difficulties by proposing the {\it gauge-theoretic approach} to the problem. Furthermore, we regard the vector densities as {\it half-densities}, with the ${\rm SO}(k)$-gauge transformations acting on them along with diffeomorphisms, which resolves both the positivity and action transitivity issues. The corresponding group acting on  densities is enlarged from diffeomorphisms to 
a semidirect product group. Such groups differ from one problem to another, and they are described below
in the various cases of balanced and unbalanced vector and matrix densities. 

Next, we propose a tight connection to Poisson geometry and demonstrate how the {\it momentum map} helps compute the horizontal distributions and prove the Riemannian submersion properties.
Finally, it turned out that natural Bures-type metrics on the acting groups and vector/matrix densities
lead to a familiar form of the geodesic equations. The geodesic equations turn out to be various ramifications of the Burgers equations of   fully compressible fluids.

\subsection*{Overview of the main results}

Here is a quick overview of the main results, first for the vector half-densities and then for the matrix densities.

\paragraph*{Vector-valued optimal transport} 
To present a sample of the results we start with a manifold $M$ of any dimension $n$ and consider the space of normalized vector-valued  half-densities
$$
\mathrm{VHProb} =\{ \mathbf{v} \in C^\infty(M,\mathbb{R}^k ) \otimes \operatorname{HProb}(M)~|~ \int_M \lvert \mathbf{v} \rvert^2 = 1 ~{\rm and ~}
\mathbf{v}\not=0 ~{\rm  on ~} M\}
$$ 
where $k$ and $n$ are unrelated.
These half-densities can be represented as $\mathbf{v}={v}\otimes \sqrt\varrho\in \mathrm{VHProb}$  with $\int_M\varrho=1$ and $\varrho > 0$.  The group acting on    half-densities in $\mathrm{VHProb}$ is the semidirect product 
$\widetilde G=  \operatorname{Diff}(M)\ltimes C^\infty(M,\operatorname{SO}(k))\ni (\varphi, A)$, where
 the diffeomorphism $\varphi$ acts on  half-densities $\mathbf{v}$ by the pullback with its inverse and with the square root of  its Jacobian  appearing as a factor, while the ${\rm SO}(k)$-current $A$ on $M$ is applied point-wise to the obtained vector-valued half-density. 
 This action turns out to be transitive and preserves positivity of the densities, see Section~\ref{sect:group-for-half-densities}. 

One can introduce natural (weak) Riemannnian metrics on both the group $\widetilde G$ and the density space $\mathrm{VHProb}$, and the natural projection becomes a Riemannian submersion (Theorem~\ref{thm:vectorRiemSub}). 
In particular,   the vector Wasserstein distance $\mathit{WV}(\mathbf{v}_0,\mathbf{v}_1)$ between densities $\mathbf{v}_0,\mathbf{v}_1 \in \mathrm{VHProb}$ is
    \[
        \mathit{WV}^2(\mathbf{v}_0,\mathbf{v}_1) = \inf_{u,a,\mathbf{v}}\int_0^1 \int_M \left( \lvert u\rvert^2 +  \operatorname{tr}(aa^\top) \right) \lvert\mathbf{v}\rvert^2 \, dt\,,
    \]
    over time-dependent vector fields $u(t)\in \mathfrak{X}(M)$, vector half-densities $\mathbf{v}(t) \in \mathrm{VHProb}$, and matrix valued functions $a(t) \in C^\infty(M, \mathfrak{so}(k))$ related by the constraints
    \[
        \dot{\mathbf{v}} = -L_u \mathbf{v} + a \mathbf{v} ,\quad \mathbf{v}(0) = \mathbf{v}_0, \quad \mathbf{v}(1) = \mathbf{v}_1,
    \]  
see Definition \ref{def:vectorWass}. Essentially, the distance between any two vector densities
increases whenever more mass has been moved and more mass was necessary to ``rotate'' in the gauge group to relate their vector values.

The corresponding geodesic equations are given by the following:

\begin{theorem}[{\bf = Theorem \ref{prop:governing_abstract}$'$}]
The reduced geodesic equations on the corresponding (co)tangent bundle
$T^{(*)}\widetilde G= T^{(*)}(\operatorname{Diff}(M)\ltimes C^\infty(M,\operatorname{SO}(k)))$, corresponding to an  optimal transport of vector densities,
are
    given by
    \begin{equation*}
        \dot{u} + \nabla_{{u}}{u} = 0, \quad \dot a + \nabla_u a = 0, \quad \dot{\mathbf{v}} + L_{{u}} \mathbf{v} =  a \mathbf{v} 
    \end{equation*}
where $\mathbf{v} = (\varphi,A)\cdot \mathbf{v}_0 = {v}\otimes\sqrt\varrho$.
In particular, the scalar density $\varrho$ defined by $\varrho = \lvert\mathbf{v}\rvert^2$
is transported by $u$: 
$$\dot \varrho +L_u \varrho=0.$$
\end{theorem}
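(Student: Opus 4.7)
The strategy is Euler--Poincar\'e reduction on the semidirect product $\widetilde G$ with the vector half-density $\mathbf{v}$ playing the role of an advected parameter. The reduced Lagrangian on $\mathfrak{g} = \mathfrak{X}(M)\ltimes C^\infty(M,\mathfrak{so}(k))$ is the kinetic energy underlying the Bures-type metric,
\[
\ell\bigl((u,a);\mathbf{v}\bigr) \;=\; \tfrac12\int_M\bigl(|u|^2 + \operatorname{tr}(aa^\top)\bigr)|\mathbf{v}|^2,
\]
supplemented by the kinematic constraint $\dot{\mathbf{v}} + L_u\mathbf{v} = a\mathbf{v}$, which merely records that $\mathbf{v}(t) = (\varphi(t),A(t))\cdot\mathbf{v}_0$ is dragged by the infinitesimal $\widetilde G$-action.

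I would first dispose of the continuity equation for $\varrho = |\mathbf{v}|^2$ directly, since it feeds into everything else. Pairing $\mathbf{v}$ with the constraint yields
\[
\dot\varrho \;=\; 2\langle\mathbf{v},\dot{\mathbf{v}}\rangle \;=\; -L_u\varrho + 2\langle\mathbf{v},a\mathbf{v}\rangle,
\]
and the last term vanishes pointwise because $a$ is antisymmetric, giving $\dot\varrho + L_u\varrho = 0$. As a by-product, this shows that $\ell$ depends on $\mathbf{v}$ only through the scalar density $\varrho$, which is the genuine advected parameter of the reduction.

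Next I would apply Hamilton's principle $\delta\int_0^1\ell\,dt = 0$ with the constrained variations prescribed by the semidirect Lie bracket, $\delta u = \dot\eta - [u,\eta]$ and $\delta a = \dot b + [\eta,a] - [u,b] + [a,b]_{\mathfrak{so}}$ for free $(\eta,b)$ vanishing at the endpoints, together with $\delta\mathbf{v}$ obtained by linearizing the advection constraint. The conjugate momenta read $\delta\ell/\delta u = \varrho\,u^\flat$ and $\delta\ell/\delta a = \varrho\, a$, and the derivative in the advected direction is $\delta\ell/\delta\varrho = \tfrac12(|u|^2 + \operatorname{tr}(aa^\top))$. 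Plugging these into the standard semidirect Euler--Poincar\'e equation with advected parameter produces two momentum equations of continuity type; dividing each by $\varrho$ and substituting $\dot\varrho + L_u\varrho = 0$ collapses them to the stated convective form.

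The main obstacle is the bookkeeping in the semidirect formula. In the $u$-equation the term $\nabla(\tfrac12|u|^2)$ generated by the coadjoint action $\operatorname{ad}^*_u(\varrho u^\flat)$ must cancel against the corresponding diamond contribution $\varrho\nabla(\delta\ell/\delta\varrho)$; exactly the same cancellation then eliminates the $\tfrac12\operatorname{tr}(aa^\top)$ piece, leaving $\dot u + \nabla_u u = 0$ (inviscid Burgers rather than incompressible Euler, since $\operatorname{Diff}(M)$ is not volume-preserving and hence no pressure multiplier appears). For the $a$-equation, the pointwise $[a,\cdot]_{\mathfrak{so}}$ contribution drops out by antisymmetry of the $\mathfrak{so}(k)$ bracket, and the remaining cross-term from the coadjoint action on $\varrho a$ simplifies, once more via the continuity equation, to the pure transport $\dot a + \nabla_u a = 0$.
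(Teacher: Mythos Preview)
Your argument is correct and complete in outline. The derivation of $\dot\varrho + L_u\varrho = 0$ is identical to the paper's, and your identification of the cancellations (the $\tfrac12 d|u|^2$ and $\tfrac12 d\operatorname{tr}(aa^\top)$ pieces from $\operatorname{ad}^*$ against the advected-parameter contribution, and the vanishing of the pointwise $\operatorname{ad}^*_a a$ term by ad-invariance of the Frobenius pairing on $\mathfrak{so}(k)$) is exactly right.

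The route, however, is genuinely different from the paper's. You work on the Lagrangian side via Euler--Poincar\'e reduction with an advected parameter and the diamond term; the paper works on the Hamiltonian side, invoking the abstract reduced equation $\dot m = \operatorname{ad}^*_{\delta H/\delta m} m - \Phi(\mathbf{v},\delta H/\delta\mathbf{v})$ from Appendix~A together with the explicit momentum map of Lemma~\ref{lem:momentum_maps} and the $\operatorname{ad}^*$ formula of Lemma~\ref{lem:ad_star}. The two are Legendre-dual, and the computations match term for term once one recognizes that your diamond contribution $\varrho\diamond(\delta\ell/\delta\varrho)$ is precisely the paper's $\Phi(\mathbf{v},\delta H/\delta\mathbf{v})$ expressed through the scalar density. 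Your observation that the Lagrangian depends on $\mathbf{v}$ only through $\varrho = |\mathbf{v}|^2$, so that $\varrho$ rather than $\mathbf{v}$ is the effective advected variable, is a slight streamlining: the paper instead computes $\Phi(\mathbf{v},\delta H/\delta\mathbf{v})$ in full and then watches everything collapse. On the other hand, the paper's choice to route through the momentum map is not gratuitous --- the same $\Phi$ is what yields the horizontal distribution in Proposition~\ref{prop:horizontal}, so that lemma does double duty. One small wording point: the $[a,\cdot]_{\mathfrak{so}}$ term drops not by ``antisymmetry of the bracket'' per se but by ad-invariance of the trace pairing, i.e.\ $\langle a,[a,b]\rangle = 0$.
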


These geodesic equations become natural  extensions of the classical Burgers equation for the velocity $u$ and  transported scalar or gauge field $a$. However, due to complexity of the horizontal distribution in the group $\operatorname{Diff}(M)\ltimes C^\infty(M,\operatorname{SO}(k))$ intertwining different components,  one cannot split out completely the velocity part and describe solutions to these seemingly simple equations solely via  gradient fields as in the classical scalar optimal transport.

Analogous equations describe the unbalanced vector transport, but with the ${\rm Conf}(k)$-gauge group  
of linear conformal transformations replacing $\operatorname{SO}(k)$, see Section \ref{sec:vector-unbalan}. Since we are interested in the local questions and the corresponding geodesic equations, everywhere in the paper we assume that vector half-densities on $M$ under consideration are homotopy equivalent and that the corresponding purely topological obstructions for their global gauge equivalence vanish, as we discuss below in Remark \ref{rem:obstruction1}.

  \medskip

\paragraph*{Matrix-valued optimal transport}
Similar metrics can be defined in the matrix density case. Namely, we consider the 
space of normalized matrix densities on $M$ (of any dimension $n$)
$$\mathrm{MProb}:=\{\mathbf S \in \Omega^n(M, {\rm Sym}_+(k))~|~ \, \int_M \operatorname{tr}(\mathbf S)=1
\}$$ 
of 
unit total mass with values in positive-definite quadratic forms of unit trace in ${\mathbb R}^k$. Here the 
${\rm Sym}_+(k)$ stands for the cone of positive-definite quadratic forms of $k$  variables.
The scalar density associated with $\mathbf S$ is  $\operatorname{tr}(\mathbf S)$, and the total mass is normalized: $\int_M \operatorname{tr}(\mathbf S)=1$.

The semidirect product group acting on this space is
$$
\widehat{PG}={\rm Diff}(M)\ltimes C^\infty(M, {\rm PGL}(k, \mathbb R)),
$$ 
with possibly different $k$ and $n$.  The group action is the composition of the diffeomorphism action and a pointwise ``rescaling'' 
as follows: 
For  an element
$$(\varphi, A)\in {\rm Diff}(M)\ltimes C^\infty(M, {\rm PGL}(k, \mathbb R))$$ the action on a matrix density $\mathbf S$ is $(\varphi, A)\cdot \mathbf S = \pi\circ A (\varphi_*\mathbf S) A^\top $,
where $\varphi_*$ acts on each matrix element thought of as a top-form, $A$ acts by pointwise change of basis in the quadratic form, and the pointwise rescaling $\pi$ takes positive-definite quadratic forms to those of unit trace.
Similarly to the above, the  action of the group $\widehat{PG}$  on matrix densities $\mathrm{MProb}$ of unit total masses   is transitive. 
\smallskip

As above, let now $M$ be a Riemannian manifold with metric $(\cdot\,, \cdot)$.
We fix an initial matrix density $\mathbf S_0 \in \mathrm{MProb}$.
A matrix analogue of the Bures-type  $L^2$-metric on the group leads to the following definition of the  matrix Wasserstein distance $\mathit{WM}(\mathbf S_0,\mathbf S_1)$ between two matrix densities $\mathbf S_0,\mathbf S_1 \in \mathrm{MProb}$:
    \[
        \mathit{WM}^2(\mathbf S_0, \mathbf S_1) = \inf_{u,a,\mathbf S}\int_0^1 \int_M \left( \lvert u\rvert^2 +  \operatorname{tr}(aa^\top) \right) \operatorname{tr}(\mathbf S) \, dt\,,
    \]
    where the infimum is taken over time-dependent vector fields $u(t)\in \mathfrak{X}(M)$, matrix densities $\mathbf S(t) \in \mathrm{MProb}$, and matrix valued functions $a(t) \in C^\infty(M, \mathfrak{gl}(k))$ related by the constraints
    \[
        \dot{\mathbf S} = -L_u\mathbf S + a\mathbf S + \mathbf S a^\top ,\quad \mathbf S(0) = \mathbf S_0, \quad \mathbf S(1) = \mathbf S_1, \quad \operatorname{tr}(a\mathbf S)=0
    \]
Here again this metric naturally extends the classical Wasserstein metric by penalizing both whenever some mass is to be moved over $M$  and whenever two matrix valued masses need an additional  transformation in the fibers to identify them.
An appropriate metric on the group $\widehat{PG}$ projects to the  above one via Riemannian submersion. 
By  introducing factorization $\mathbf S = S\otimes\varrho$ where $\operatorname{tr}S = 1$ one can write the corresponding reduced geodesic equations on (the cotangent space of) the group $\widehat{PG}$ as follows:

\begin{theorem}[{\bf = Theorem \ref{thm:governing_equations_MDens_norm}$'$}]
    The Hamiltonian equations of motion, corresponding to the reduced geodesic equations on $\widehat{PG}$ for an  optimal transport of matrix densities, are
    \begin{align*}
        & \dot{u} + \nabla_{u}{u} = 0, \qquad ~\qquad [\dot a] + \nabla_u [a] = 0, \\ 
        & \dot{S} + \nabla_u S = a S + Sa^\top , \quad \dot\rho + \operatorname{div}(\rho u) = 0\,,
    \end{align*}
    with the constraint $\operatorname{tr}(aS)\equiv 0$ for selecting $a$ from the coset $[a]\in C^\infty(M,\mathfrak{pgl}(k))$.
\end{theorem}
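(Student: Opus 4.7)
The plan is to derive the system as an Euler--Poincaré (Euler--Arnold) reduction on the semidirect product Lie algebra of $\widehat{PG}$, using $\mathbf S$ as an advected parameter and then passing to the factorization $\mathbf S = S\otimes\varrho$. Concretely, I would identify the Lie algebra as pairs $(u,[a])\in\mathfrak X(M)\ltimes C^\infty(M,\mathfrak{pgl}(k))$, with the (density-dependent) Bures-type inner product
\[
  \llangle (u_1,a_1),(u_2,a_2)\rrangle_{\mathbf S} = \int_M \bigl((u_1,u_2) + \operatorname{tr}(a_1 a_2^\top)\bigr)\,\operatorname{tr}(\mathbf S),
\]
where a representative $a\in\mathfrak{gl}(k)$ of the coset $[a]\in\mathfrak{pgl}(k)$ is selected by the horizontality condition $\operatorname{tr}(a\mathbf S)=0$ (equivalently $\operatorname{tr}(aS)=0$, since $\operatorname{tr}\mathbf S = \varrho$ factors out). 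The infinitesimal action of $(u,a)$ on $\mathbf S$ is $\mathbf S\mapsto -L_u\mathbf S + a\mathbf S + \mathbf S a^\top$, which yields the kinematic constraint used in the definition of $\mathit{WM}$ and which, after separating trace, gives both $\dot\varrho + \operatorname{div}(\varrho u)=0$ (using $\operatorname{tr}(aS)=0$) and $\dot S + \nabla_u S = aS + Sa^\top$.

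Next I would apply the Euler--Poincaré principle with the advected parameter $\mathbf S$: varying the action
\[
  \int_0^1 \tfrac12\llangle (u,a),(u,a)\rrangle_{\mathbf S}\,dt
\]
with constrained variations $\delta u = \dot w - [u,w]$, $\delta a = \dot b - [u,b] + [a,b] - L_w a$ (or the analogous right-invariant versions on a semidirect product), and $\delta\mathbf S$ determined by the advection relation. The standard semidirect-product Euler--Poincaré formula (cf.\ Holm--Marsden--Ratiu) then produces two momentum-equation components together with the passive continuity/advection equations. For the diffeomorphism component, the usual calculation (noting that $\mathbf S$ is advected, so the pairing with $\delta \mathbf S$ cancels the terms that would otherwise generate pressure) reduces to the pressureless Burgers equation $\dot u + \nabla_u u = 0$. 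For the gauge component, since the cotangent action of $\operatorname{Diff}(M)$ on currents is by pullback, the analogous calculation delivers $[\dot a] + \nabla_u[a]=0$, where passing to the $\mathfrak{pgl}$-coset is forced by the fact that only the traceless (relative to $\mathbf S$) part of $a$ has nonzero metric dual.

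The main obstacle, and the step deserving care, is the projective quotient: one must verify that the constraint $\operatorname{tr}(aS)\equiv 0$ is both preserved by the flow and correctly identifies the horizontal lift for the Riemannian submersion $\widehat{PG}\to \mathrm{MProb}$ arising from Theorem~\ref{thm:vectorRiemSub}'s matrix analogue. Preservation follows by differentiating $\operatorname{tr}(aS)$ along the system: using $\dot S+\nabla_u S=aS+Sa^\top$ and $[\dot a]+\nabla_u[a]=0$ one obtains $\partial_t\operatorname{tr}(aS)+\operatorname{div}(u\operatorname{tr}(aS))=0$, so the constraint propagates from any initial datum satisfying it. The horizontality claim is exactly the statement that a geodesic representative of $[a]$ satisfies $\operatorname{tr}(aS)=0$; this should be read off the momentum map for the $C^\infty(M,\mathbb R^\times)$ isotropy subgroup of the normalization $\operatorname{tr}\mathbf S=\operatorname{const}$, using the momentum-map approach advertised in the introduction.

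Finally, to put the system into Hamiltonian form one Legendre-transforms: the momenta $m=\operatorname{tr}(\mathbf S)\,u^\flat$ and $\mu = \operatorname{tr}(\mathbf S)\,a$ (modulo scalars) obey Lie--Poisson equations on $\mathfrak X(M)^*\ltimes C^\infty(M,\mathfrak{pgl}(k))^*$ coupled to the advection of $\mathbf S$, and rewriting in the variables $(u,[a],S,\varrho)$ recovers the stated equations. The factorization $\mathbf S = S\otimes\varrho$ is the cleanest way to decouple the scalar Burgers--continuity pair $(u,\varrho)$ from the matrix-valued part $(S,[a])$, and it is what makes the four equations take the displayed diagonal form.
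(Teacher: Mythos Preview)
Your Euler--Poincar\'e/advected-parameter route is a legitimate alternative to the paper's derivation, which proceeds instead on the Hamiltonian side: the paper first computes the momentum map $\Phi\colon T^*\mathrm{MDens}\to\widehat{\mathfrak g}^*$ and the $\operatorname{ad}^*$-operator, writes the reduced Hamiltonian equations on $\widehat{\mathfrak g}^*\times\mathrm{MDens}$ in the unbalanced case (Theorem~\ref{thm:governing_equations_MDens}), and then obtains the balanced system simply by replacing $\mathfrak{gl}(k)$ by $\mathfrak{pgl}(k)$ and choosing the representative $a\in[a]$ with $\operatorname{tr}(a\mathbf S)=0$. So the paper's argument is shorter, leaning on the momentum-map machinery already assembled in Appendix~\ref{sec:momentum_map_appendix}, while your approach is self-contained in the Holm--Marsden--Ratiu style and produces the Lagrangian picture first before Legendre-transforming.

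Two points in your sketch deserve correction. First, the condition $\operatorname{tr}(aS)=0$ is \emph{not} the horizontality condition for the Riemannian submersion $\widehat{PG}\to\mathrm{MProb}$; it is the choice of coset representative for which the (unnormalized) infinitesimal action $a\mathbf S+\mathbf S a^\top$ already preserves $\operatorname{tr}\mathbf S$, i.e., the representative adapted to the $\mathrm{PGL}$-action on $\mathrm{MProb}$. (Horizontality for $\widehat G\to\widehat{PG}$ with the Bures metric would instead give $\operatorname{tr}(a)=0$.) Second, your ``constraint propagation'' computation is off: differentiating $\operatorname{tr}(aS)$ along $\dot a+\nabla_u a=\lambda I$ and $\dot S+\nabla_u S=aS+Sa^\top$ yields
\[
\partial_t\operatorname{tr}(aS)+\nabla_u\operatorname{tr}(aS)=\lambda+\operatorname{tr}\bigl((a^2+a^\top a)S\bigr),
\]
which is not a pure transport law; rather, $\lambda$ is \emph{determined} by the requirement that the constraint hold (this is exactly the Lagrange multiplier in the paper's formulation). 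In other words, maintaining $\operatorname{tr}(aS)=0$ is a gauge choice within the coset, not a conservation law to be checked a posteriori. Once these two points are clarified, your outline and the paper's proof are equivalent.
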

Note that the geodesic equations turn out to be rather explicit and can be regarded as a Burgers-type intertwined version of compressible chiral fluids. As before, the difficulty of solving them explicitly is in 
the complexity of the horisontal distribution for the corresponding Riemannian submersion.
We discuss the equations in more detail in Section~\ref{sec:matrix-balanced}, as well as an unbalanced version of this transport in Section \ref{sec:matrix-unbalanced}. 
It is worth mentioning that unlike the vector case, all matrix densities on $M$ are homotopy equivalent since the space of
positive-definite quadratic forms is contractible. One might still encounter 
topological obstructions for their gauge equivalence which we assume vanishing, see Remark \ref{rem:obstruction1}.

\medskip

{\bf Notations} Throughout the paper we adopt the following notations: $\mathrm{Prob}$  and $\mathrm{HProb}$ stand respectively for normalized densities and half-densities, while for non-normalized ones we reserve $\mathrm{Dens}$ and $\mathrm{HDens}$. The vector and matrix objects are marked by V or M, so that, e.g., normalized vector half-densities are $\mathrm{VHProb}$, while all matrix densities are $\mathrm{MDens}$.


\medskip

{\bf Motivation} One of the most straightforward applications of optimal transport of vector densities includes colored medical imaging, for example \emph{multi parametric magnetic resonance imaging} where the color channels correspond to different types of imaging data~\cite{Kather2017}.
By making a colored picture of an organ, one can see ``how far'' it is from its  healthier shape, size, and color.
Medical imaging is also relevant for matrix-valued densities, for example, for \emph{diffusion tensor imaging} where the imaging data consists of point-wise symmetric matrices that describe the diffusion of water in the tissue.

As another application we mentioned the theory of quantum spin liquids in condensed matter physics. It describes a phase of matter that can be formed by interacting quantum spins in certain magnetic materials. This  state is intuitively described as a ``liquid'' of disordered spins, much in the way liquid water is in a disordered state compared to crystalline ice. In this paper we propose a possible measure of the distance between any two such vector-valued states. We discuss several other motivations in the last section.

\medskip

The paper is organized as follows.
In section~\ref{sec:classical_OT} we give background on the geometric setting of classical optimal transport.
In section~\ref{sec:vector_half_densities} we introduce vector half-densities and their corresponding transport maps (based on a semi-direct product), and we give a geometric description of the corresponding transport problem, including the geodesic equations.
In section~\ref{sect:unbal} we first review the framework of unbalanced optimal transport, and then discuss how to generalize it to vector half-densities.
Finally, in section~\ref{sec:matrix_densities} we present a universal point-of-view, which deals with matrix densities and encapsulates all the other frameworks.
In section~\ref{sect:questions} we discuss various open questions and further directions. In appendices we describe
how the moment map manifests itself in optimal transport (appendix~\ref{sec:momentum_map_appendix}) and
demostrate the geodesic equations for an alternative metric on matrix densities (appendix~\ref{sect:alternative}).

\paragraph*{Acknowledgements}
The authors are indebted to Sergei Tabachnikov,  Anton Izosimov and Fran\c{c}ois-Xavier Vialard,  as well as other participants of the workshops
``Math en plein air", for fruitful discussions.
B.K. was partially supported by an NSERC Discovery Grant. K.M. was supported by the Swedish Research Council (grant number 2022-03453), the Knut and Alice Wallenberg Foundation (grant numbers WAF2019.0201), and the Göran Gustafsson Foundation for Research in Natural Sciences and Medicine.


\section{Optimal transport and semidirect products}\label{sec:classical_OT}

\subsection{Geometric setting of scalar optimal transport}
Let $\mu$ be a smooth reference volume form (or density) 
 of unit total mass on a manifold $M$, 
and consider the projection $\pi : \operatorname{Diff}(M) \to \operatorname{Prob}(M)$ of diffeomorphisms 
onto the space $\operatorname{Prob}(M)$ of smooth probability (i.e. normalized) 
densities on $M$. 
The  diffeomorphism group $\operatorname{Diff}(M)$
is fibered over $\operatorname{Prob}(M)$ by means of this projection
$\pi$ as follows: the fiber over a volume form $\varrho$  consists of all 
diffeomorphisms $\eta$ that push $\mu$ to $\varrho$,  $\eta_*\mu=\varrho$. In other words, 
two diffeomorphisms $\eta_1$ and $\eta_2$ 
belong to the same fiber 
if and only if $\eta_1=\eta_2\circ\varphi$ for some 
diffeomorphism $\varphi$ preserving the volume form $\mu$.
Diffeomorphisms from $\operatorname{Diff}(M)$ act transitively on smooth densities, 
according to the Moser theorem.

Suppose now that $M$ is a compact $n$-dimensional Riemannian manifold with metric $(~,~)$. 
Then one can define (weak) Riemannian metrics on both the diffeomorphism group $\operatorname{Diff}(M)$
and the density space $\operatorname{Prob}(M)$. Namely,  at each point $\eta\in\operatorname{Diff}(M)$
of  the diffeomorphism group is defined in the following straightforward way: 
given $X,Y\in \mathfrak{X}(M)$, the inner product of
two vectors $X\circ\eta,Y\circ\eta\in T_\eta\operatorname{Diff}(M)$  is 
\begin{equation}\label{eq:diff_metric}
\llangle X\circ\eta,Y\circ\eta\rrangle_{\eta}
=\int_M \left(X\circ\eta(x),Y\circ\eta(x)\right)\,\mu(x).
\end{equation}
This metric is right-invariant when restricted to the subgroup
$\operatorname{Diff}_\mu(M)$ of volume-preserving diffeomorphisms, although it is
not right-invariant on the whole group $\operatorname{Diff}(M)$. 
It is worth mentioning that for a   flat manifold $M$ this metric is a  flat
metric on $\operatorname{Diff}(M)$:
a neighborhood of the identity $e\in \operatorname{Diff}(M)$ with the metric 
$\llangle~,~\rrangle_{\operatorname{Diff}} $ is isometric to a neighborhood
in the pre-Hilbert space 
of smooth ``vector-functions'' $\eta: M\to M$ with the $L^2$ inner product
$\langle~,~\rangle_{L^2(M)}$.
\smallskip

On the other hand, on the density space $\operatorname{Prob}(M)$  there also exists a natural metric 
inspired by the following  {\it optimal mass
transport problem}: find a (sufficiently regular) map $\eta:M\to M$ that pushes
the  measure $\mu$ forward to $\varrho$ and attains the minimum
of the $L^2$-cost functional 
$\int_M \operatorname{dist}^2(x,\eta(x))\mu$ among all such maps, where $\operatorname{dist}$ is the Riemannian distance on $M$.
The minimal cost of transport defines the $L^2$  
{\it Wasserstein distance} ${\operatorname{Dist}}$ on densities $\operatorname{Prob}(M)$:
$$
{\operatorname{Dist}}^2(\mu, \varrho)
:=\inf_\eta\Big\{\int_M \operatorname{dist}^2(x,\eta(x))\mu~
|~\eta_*\mu=\varrho\Big\}\,.
$$
One can see that the Wasserstein distance is formally 
generated by the (weak) Riemannian metric on the space $\operatorname{Prob}(M)$ of smooth densities:
\begin{equation}\label{eq:dens_metric}
\langle\dot{\varrho},\dot{\varrho}\rangle_\varrho = 
		\int_M |{\nabla {\theta}}|^2 \varrho \quad\text{for } \dot{\rho} + {\rm div}_\mu({\rho} \nabla {\theta})=0,
\end{equation}
where $\dot{\rho}  \in C^\infty(M)$ is a tangent vector to ${\rm Prob}(M)$ at the point 
$\varrho=\rho\mu$. (The gradient vector field $\nabla{\theta}$ has the minimal $L^2$ energy among all vector fields
moving the volume form $\varrho$ in the direction $\dot{\varrho} $ due to the Hodge decomposition.) 
Thus both $\operatorname{Diff}(M)$ and ${\rm Prob}(M)$ can be regarded as infinite-dimensional Riemannian 
manifolds.

\begin{theorem}[\cite{Otto2001}]\label{rsub}
The bundle map $\pi:\operatorname{Diff}(M)\to {\rm Prob}(M)$ is a
Riemannian submersion of the metric \eqref{eq:diff_metric} 
on the diffeomorphism group 
$\operatorname{Diff}(M)$ to the metric \eqref{eq:dens_metric} 
on the density space ${\rm Prob}(M)$.
The horizontal distribution 
consists of right-translated gradient fields.
\end{theorem}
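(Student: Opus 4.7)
The plan is to verify directly the two defining properties of a Riemannian submersion: identify the vertical/horizontal splitting of $T\operatorname{Diff}(M)$ induced by the fibers of $\pi$, and then show that $d\pi$ restricted to the horizontal subspace is an isometry onto $T\operatorname{Prob}(M)$ endowed with \eqref{eq:dens_metric}. First, at a point $\eta\in\operatorname{Diff}(M)$ with $\varrho=\eta_*\mu$, the fiber $\pi^{-1}(\varrho)$ is the right coset $\eta\circ\operatorname{Diff}_\mu(M)$; differentiating curves $t\mapsto\eta\circ\varphi_t$ with $\varphi_t\in\operatorname{Diff}_\mu(M)$ and writing the tangent vector in the Eulerian form $X\circ\eta$ shows that the vertical subspace at $\eta$ consists of those $X\circ\eta$ for which $X=\eta_*Y$ with $L_Y\mu=0$, equivalently $L_X\varrho=0$.

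To locate the horizontal complement I would change variables $y=\eta(x)$ in \eqref{eq:diff_metric} to rewrite the metric as the weighted $L^2$ inner product $\int_M(X,Y)\,\varrho$ on $\mathfrak{X}(M)$. Integration by parts yields $\int_M(\nabla\theta,Z)\,\varrho=-\int_M\theta\,\operatorname{div}_\mu(\rho Z)\,\mu$ (with $\varrho=\rho\mu$), which vanishes whenever $L_Z\varrho=0$; hence gradient fields are orthogonal to the vertical subspace. Conversely, given any $X\in\mathfrak{X}(M)$, solving the uniformly elliptic equation $\operatorname{div}_\mu(\rho\nabla\theta)=\operatorname{div}_\mu(\rho X)$ for $\theta$ produces a weighted Helmholtz decomposition $X=\nabla\theta+Z$ with $L_Z\varrho=0$. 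This identifies the horizontal distribution at $\eta$ with the right-translated gradient fields $\nabla\theta\circ\eta$.

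For the isometry property, the continuity equation gives $d\pi_\eta(X\circ\eta)=-L_X\varrho$, which for $X=\nabla\theta$ reads $\dot\rho+\operatorname{div}_\mu(\rho\nabla\theta)=0$ — exactly the constraint appearing in \eqref{eq:dens_metric}. Comparing norms,
\[
    \llangle\nabla\theta\circ\eta,\nabla\theta\circ\eta\rrangle_\eta
    =\int_M|\nabla\theta|^2\circ\eta\,\mu
    =\int_M|\nabla\theta|^2\,\varrho
    =\langle\dot\varrho,\dot\varrho\rangle_\varrho,
\]
via the same change of variables as above. Surjectivity of $d\pi_\eta$ follows from the same elliptic solvability, so every tangent vector to $\operatorname{Prob}(M)$ admits a unique horizontal lift. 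The single technical step carrying real content is this weighted Hodge decomposition: one must guarantee that $\operatorname{div}_\mu(\rho\nabla\theta)=f$ with $\int_M f=0$ is uniquely solvable modulo constants whenever $\rho>0$ is smooth — a standard elliptic fact on compact $M$, but the one point that genuinely uses the positivity hypothesis. Everything else amounts to bookkeeping with pullbacks and the Lie derivative of a volume form.
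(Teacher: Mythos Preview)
Your argument is correct and complete: the identification of the vertical space via the right coset structure, the weighted Helmholtz decomposition for the horizontal complement, and the change-of-variables computation of the norm are all accurate, and you correctly flag the elliptic solvability of $\operatorname{div}_\mu(\rho\nabla\theta)=f$ as the one substantive analytic input.

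There is nothing in the paper to compare against: Theorem~\ref{rsub} is quoted as background from \cite{Otto2001} and carries no proof here. What you have written is essentially the standard argument (and the one implicit in Otto's original paper and in the subsequent geometric literature), so your approach is not a departure from anything---it simply fills in what the authors chose to cite rather than reprove.
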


Recall that for two Riemannian manifolds $P$ and $B$ a  {\it Riemannian submersion}
$\pi: P\to B$ is a mapping onto $B$ that has maximal rank and preserves 
lengths of horizontal tangent vectors to $P$. 
For a bundle $P\to B$ this means that on $P$ there is a distribution 
of horizontal 
spaces that are orthogonal to fibers and projected isometrically to the 
tangent spaces to $B$.


\subsection{Geodesics in the scalar transport}
One of the main problems in optimal mass transport is to describe geodesics on the space $\operatorname{Prob}(M)$ of densities, that is, on the base of the bundle $\operatorname{Diff}(M)\to \operatorname{Prob}(M)$.
However, it is easier to  start instead with the description of geodesics on $\operatorname{Diff}(M)$ and then restrict to horizontal geodesics, which correspond to geodesics on the base.

Define the (inviscid) {\it Burgers equation} \index{Burgers equation} as the 
 evolution equation
\begin{equation}\label{eq:burgers}
\partial_t u+\nabla_u u=0\,
\end{equation}
for a vector field $u$ on $M$, where $\nabla_u u$ stands for the covariant derivative on $M$. 
Consider the flow 
$(t,x)\mapsto \eta(t,x)$ corresponding to  velocity field $u$:
$$
\partial_t \eta (t,x) = u(t,\eta(t,x)), \qquad \eta(0,x)=x. 
$$ 

\begin{proposition}[\cite{Otto2001}]\label{prop:burg}
(1) Solutions of the Burgers equation are time-dependent vector fields on $M$
that describe the following flows of fluid particles: 
each particle moves with constant velocity (defined by the initial condition) 
along a geodesic in $M$.

(2) Geodesics in the group $\operatorname{Diff}(M)$ with respect to 
the above $L^2$-metric  correspond to solutions of the Burgers 
equation. Horizontal geodesics in  $\operatorname{Diff}(M)$ are normal to the submanifold $\operatorname{Diff}_\mu(M)$ and
have potential initial conditions.  
\end{proposition}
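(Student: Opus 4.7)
}

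The plan is to prove (1) by a direct Lagrangian/Eulerian translation of the Burgers equation, and (2) by exploiting the fact that the $L^2$-metric on $\operatorname{Diff}(M)$ decouples pointwise over particles, reducing the variational problem to a family of independent geodesic problems on $M$.

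For part (1), I would start from the flow relation $\partial_t \eta(t,x) = u(t,\eta(t,x))$ and compute the covariant acceleration of the trajectory $t \mapsto \eta(t,x)$. By the chain rule for covariant derivatives along a curve,
\[
    \tfrac{D}{dt}\dot\eta(t,x) \;=\; \bigl(\partial_t u + \nabla_u u\bigr)\!\bigl(t,\eta(t,x)\bigr),
\]
so the Burgers equation \eqref{eq:burgers} is exactly the statement that the covariant acceleration of each particle vanishes. This gives (1): each particle's trajectory is a geodesic in $M$, traversed with constant speed determined by $u(0,x)$.

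For part (2), I would write the energy of a path $\eta(\cdot)$ in $\operatorname{Diff}(M)$ using \eqref{eq:diff_metric} and apply Fubini:
\[
    E[\eta] \;=\; \int_0^1 \llangle \dot\eta,\dot\eta \rrangle_\eta\, dt \;=\; \int_M \Bigl(\int_0^1 \lvert \dot\eta(t,x)\rvert^2\, dt\Bigr)\mu(x).
\]
With fixed endpoints $\eta(0)$ and $\eta(1)$, the inner integrand can be minimized independently for each $x$, so the minimizer is a constant-speed geodesic in $M$ from $\eta(0,x)$ to $\eta(1,x)$ for every $x$. Equivalently, the Euler--Lagrange equation is $D_t\dot\eta=0$, which by the computation in part (1) is the Burgers equation for the Eulerian velocity $u=\dot\eta\circ\eta^{-1}$. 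This gives the first sentence of (2).

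For the horizontal geodesics, I would invoke Theorem~\ref{rsub}: the horizontal distribution consists of right-translated gradient fields, so at the identity a horizontal initial velocity is of the form $u(0)=\nabla\theta$, which is precisely a potential initial condition. The Hodge decomposition $\mathfrak{X}(M) = \{\nabla\theta\}\oplus \{\text{divergence-free}\}$, orthogonal with respect to \eqref{eq:diff_metric} at $e$, shows that such initial velocities are $L^2$-orthogonal to $T_e\operatorname{Diff}_\mu(M)$; since the whole horizontal geodesic is the flow of a single gradient field (by part (1) combined with preservation of the horizontal distribution along solutions, which is the content of Theorem~\ref{rsub}), it remains normal to the $\operatorname{Diff}_\mu(M)$-fibers throughout. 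The main obstacle is really conceptual rather than computational: one has to check that the particle-wise decoupling argument is compatible with the weak Riemannian structure on the infinite-dimensional manifold $\operatorname{Diff}(M)$, and that the horizontal distribution from Theorem~\ref{rsub} is preserved along Burgers solutions so that "potential at $t=0$" propagates to "potential for all $t$''.
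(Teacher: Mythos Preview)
The paper does not actually prove Proposition~\ref{prop:burg}: it is stated with a citation to \cite{Otto2001} and followed immediately by a remark on the Euler equation, with no proof environment. So there is no ``paper's own proof'' to compare against.

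Your argument is the standard one and is essentially correct. One small imprecision: in the last paragraph you write that ``the whole horizontal geodesic is the flow of a single gradient field''. This is not quite right---the Eulerian velocity $u(t)$ evolves in time according to Burgers, so it is not a \emph{single} field. What is true (and what you need) is that $u(t)$ remains a gradient for each $t$: if $u(0)=\nabla\theta_0$ then the ansatz $u(t)=\nabla\theta(t)$ reduces Burgers to the Hamilton--Jacobi equation $\partial_t\theta+\tfrac12|\nabla\theta|^2=0$, since $\nabla_{\nabla\theta}\nabla\theta=\tfrac12\nabla|\nabla\theta|^2$. Equivalently, horizontal geodesics of a Riemannian submersion remain horizontal, and by Theorem~\ref{rsub} horizontality at $\eta$ means $\dot\eta=(\nabla\theta)\circ\eta$. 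With that correction your plan goes through.
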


\begin{remark}
Recall that the {\it Euler equation} 
$
\partial_t u+\nabla_u u=-\nabla p 
$
on a divergence-free field $u$ on $M$, ${\rm div} \,u=0$,  describes the motion of 
an ideal incompressible fluid filling $M$. It corresponds to 
the equation of the geodesic flow of the right-invariant $L^2$
metric on the group $\operatorname{Diff}_\mu(M)$, regarded as a submanifold $\operatorname{Diff}_\mu(M)\subset \operatorname{Diff}(M)$. Here the term $-\nabla p $ stands for the constraining force orthogonal to $\operatorname{Diff}_\mu(M)$ and keeping the geodesic on this submanifold.
\end{remark}


\subsection{Semi-direct products}
Let $G$ be a Lie group acting from the left on another Lie group $L$ such that the action map $g\cdot L\to L$ is an automorphism: $g\cdot (\ell_1 \ell_2) = (g\cdot \ell_1)(g\cdot \ell_2)$.
Then we can form the semi-direct product group $G\ltimes L$ with product
\begin{equation}
    (g_1,\ell_1)\cdot (g_2, \ell_2) = \big(g_1g_2, \ell_1(g_1\cdot \ell_2)\big).
\end{equation}
Assume further that both $G$ and $L$ act from the left on the vector space $V$.
An action of $G\ltimes L$ on $V$ is then given by $(g,\ell)\cdot v = \ell \cdot (g\cdot v)$.
Assume that this action is transitive.

\begin{example}
The group 
$${\rm Aff}(n) = {\rm GL}(n)\ltimes {\mathbb R}^n
$$ of affine transformations of the $n$-dimensional space is a semidirect product. In this group, an affine transformation is described by a pair $(A,b)$ of a linear operator $A\in {\rm GL}(n)$  and a translation by a vector $b \in {\mathbb R}^n$, so that the group multiplication between two such pairs, 
$$
(A_1, b_1)\circ (A_2, b_2) :=(A_1 A_2, A_1b_2+b_1)\,,
$$ 
is determined by the consecutive application of transformations on elements of ${\mathbb R}^n$.

In particular, the semidirect group $E(3)= {\rm SO}(3)\ltimes {\mathbb R}^3$ of motions of the 3-dimensional space is related to the Kirchhoff equations for the motion of a rigid body in a fluid.
\end{example}

\begin{example}
Similarly, the equations of gas dynamics are related to the semidirect product Lie group 
$$
\widetilde G=\operatorname{Diff}(M)\ltimes C^\infty(M, \mathbb R)\,,
$$ 
which  consists of pairs $(\phi, f) $ with a diffeomorphism $\phi$ of the manifold $M$, and a smooth 
function $f$ on $M$, see e.g. \cite{Khesin2021}. The product in the group $\operatorname{Diff}(M)\ltimes C^\infty(M, \mathbb R)$ 
is given by 
$$(\phi, f) \cdot (\psi, g) := (\phi\circ\psi, \phi_*g+f)\,,
$$ 
where $\phi_*g = g \circ \phi^{-1}$  denotes the pushforward of the function $g$ by the diffeomorphism $\phi$  (or, equivalently, the pullback by $\phi^{-1}$).

The corresponding Lie algebra is the semidirect product 
$\tilde{\mathfrak g}=\mathfrak{X}(M)\ltimes C^\infty(M, \mathbb R)\,,$ which is
$\mathfrak{X}(M)\otimes C^\infty(M, \mathbb R)$ as a vector space with the
commutator  given by
$$
[(u,a), (v,b)]=(L_u v ,-L_u b + L_v a), 
$$
where $L_u$ stands for the Lie derivative of functions or commutator of  vector fields on the manifold $M$.

This is the type of semidirect Lie groups and  algebras to be used below, with the group of $\operatorname{Diff}(M)$ acting on the spaces of functions $C^\infty(M, R)$ with values in various vector and matrix spaces $R$.
\end{example}

\medskip

\section{Mass transport of vector half-densities}\label{sec:vector_half_densities}

\subsection{The group action on vector half-densities}\label{sect:group-for-half-densities}
Now we consider the corresponding setup for vector densities and describe a
similar semi-direct product group  acting on them in a transitive way.  

Given a compact orientable manifold $M$ of dimension $n$
define the semi-direct product group $\widetilde G=G\ltimes N$ where 
$G= \operatorname{Diff}(M)$ and $N = C^\infty(M,\operatorname{SO}(k))$.\footnote{One can generalize the setting to non-compact manifolds, e.g.\ $M = \mathbb{R}^n$, by imposing decaying conditions on the objects involved.}
We define its action on the space 
$$
\mathrm{VHProb} =\{ \mathbf{v} \in C^\infty(M,\mathbb{R}^k ) \otimes \operatorname{HProb}(M)~\Big|~ \int_M \lvert \mathbf{v} \rvert^2 = 1 ~{\rm and ~}
\mathbf{v}\not=0 ~{\rm  on ~} M\}
$$ 
of normalized vector-valued  half-densities.
(Half-densities mean top degree forms which under the action of diffeomorphisms are getting multiplied by the square root of the Jacobian, as we discuss below.)
They can be represented as $\mathbf{v}={v}\otimes \sqrt\varrho\in \mathrm{VHProb}$  with $\int_M\varrho=1$ and $\varrho > 0$.
Here the tensor product is over $C^\infty(M, \mathbb R)$, and one can also assume that 
${v}(x)\in S^{k-1}\subset \mathbb{R}^k $ for all $x\in M$.

The action of $\varphi\in \operatorname{Diff}(M)$ on $\mathbf{v} \in \mathrm{VHProb}$ is the component-wise half-density pushforward:
\[
  \varphi\cdot \mathbf{v} = \varphi_*\mathbf{v} = {v}\circ\varphi^{-1} \sqrt{\varphi_*\varrho}  
\]
The corresponding infinitesimal action of $u\in \mathfrak{X}(M)$ is the component-wise Lie derivative: $u\cdot \mathbf{v} = -L_u \mathbf{v}$.
In particular, relative to the representation $\mathbf{v} = v\otimes \sqrt{\varrho}$, we have
\[
    L_u (v\otimes \sqrt{\varrho}) = (L_u v) \otimes \sqrt{\varrho} + v\otimes \frac{1}{2}\frac{L_u\varrho}{\sqrt{\varrho}}= (L_u v + \frac{L_u\varrho}{2\varrho}v )\otimes\sqrt{\varrho}.
\]

The action of the gauge transformation $A \in N$ on $\mathbf{v}\in \mathrm{VHProb}$ is pointwise: 
$$(A\cdot \mathbf{v})(x) = A(x)\mathbf{v}(x).$$
The corresponding infinitesimal action is $(a\cdot \mathbf{v})(x) = a(x)\mathbf{v}(x)$ for $a \in T_e N = C^\infty(M,\mathfrak{so}(k))$.

\begin{definition}\label{def:action}
The combined group action for 
$\widetilde G=  \operatorname{Diff}(M)\ltimes C^\infty(M,\operatorname{SO}(k))$ on  half-densities
in $\mathrm{VHProb}$ is as follows. 
The action of an element $(\varphi, A)\in \widetilde G=  \operatorname{Diff}(M)\ltimes C^\infty(M,\operatorname{SO}(k))$ is
$$
(\varphi, A):\, \mathbf{v}={v}\otimes \sqrt\varrho 
\mapsto 
A(\varphi_*\mathbf{v}) = A(v\circ\varphi^{-1})\sqrt{\varphi_*\varrho}\, ,
$$
where the latter expression at a point $x\in M$, relative to a reference density $\mu$, is 
$$
\Big(A(v\circ\varphi^{-1})\sqrt{\varphi_*\varrho}\Big)(x)
=A(x)v(\varphi^{-1}(x)) \sqrt{\operatorname{Jac}_\mu(\varphi^{-1})(x)}\,\rho(\varphi^{-1}(x))\mu(\varphi^{-1}(x))\,.
$$ 
In other words, the diffeomorphism acts on  half-densities by the pullback with its inverse and with the square root of  the Jacobian  appearing as a factor, while the ${\rm SO}(k)$-current $A$ on $M$ is applied point-wise to the obtained vector-valued half-density. 
\end{definition} 

The combined infinitesimal action of $(u,a) \in T_{(id,e)}(G\ltimes N)$ is 
\begin{equation}\label{eq:inf_action_on_v}
    (u,a)\cdot \mathbf{v} = -L_u \mathbf{v} + a \mathbf{v} .
\end{equation}

\begin{lemma}\label{lem:trans}
    The action of the group $\widetilde G=  \operatorname{Diff}(M)\ltimes C^\infty(M,\operatorname{SO}(k))$
    on the half-densities $\mathrm{VHProb}$ is transitive. 
\end{lemma}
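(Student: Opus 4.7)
The plan is to prove transitivity in two stages, first matching scalar densities by a diffeomorphism, and then matching the unit vector parts by a gauge transformation.

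Given $\mathbf{v}_0,\mathbf{v}_1\in\mathrm{VHProb}$, write $\mathbf{v}_i = v_i\otimes \sqrt{\varrho_i}$ with $\varrho_i>0$, $\int_M \varrho_i=1$, and (using the remark that $v_i$ can be chosen in $S^{k-1}$) $|v_i(x)|\equiv 1$. Since $\varrho_0$ and $\varrho_1$ are smooth positive densities of unit total mass on the compact manifold $M$, Moser's theorem gives a diffeomorphism $\varphi\in\operatorname{Diff}(M)$ with $\varphi_*\varrho_0=\varrho_1$. Then
\[
\varphi_*\mathbf{v}_0 = (v_0\circ\varphi^{-1})\otimes\sqrt{\varrho_1},
\]
so that $\varphi_*\mathbf{v}_0$ and $\mathbf{v}_1$ have the same scalar density $\varrho_1$. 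In particular, writing $w_0 := v_0\circ\varphi^{-1}$, both $w_0(x)$ and $v_1(x)$ lie in $S^{k-1}\subset\mathbb{R}^k$ at every point $x\in M$.

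It remains to construct a smooth map $A\colon M\to\operatorname{SO}(k)$ with $A(x)w_0(x)=v_1(x)$ for all $x$; then $(\varphi,A)\cdot\mathbf{v}_0=\mathbf{v}_1$ by Definition~\ref{def:action}. For $k=2$ there is a unique such rotation at each point and smoothness is automatic. For $k\geq 3$, fix any $e\in S^{k-1}$ and choose smooth sections $R_0,R_1\colon M\to\operatorname{SO}(k)$ with $R_i(x)e=w_i(x)$ (with $w_1:=v_1$): this is the local/global problem of lifting a map $M\to S^{k-1}$ through the principal $\operatorname{SO}(k-1)$-bundle $\operatorname{SO}(k)\to S^{k-1}$. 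Once such sections exist, $A(x):=R_1(x)R_0(x)^{-1}$ does the job.

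The main obstacle is exactly the existence of those smooth lifts $R_i$: it is a purely topological question, governed by the characteristic classes of the pulled-back bundles $w_i^*\operatorname{SO}(k)\to M$. These obstructions are assumed to vanish in the paper (see Remark~\ref{rem:obstruction1}), which is precisely the homotopy-equivalence hypothesis on vector half-densities. Under this assumption the sections $R_i$ exist, and hence $A\in C^\infty(M,\operatorname{SO}(k))$ exists, completing the proof. Concretely, the assumption guarantees that $w_0$ and $w_1$ are homotopic as maps $M\to S^{k-1}$ and that one (hence both) admits a global $\operatorname{SO}(k)$-frame extending it, which is what the constructions of $R_0$ and $R_1$ require.
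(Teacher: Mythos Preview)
Your proof is correct and follows essentially the same two-step strategy as the paper: first apply Moser's theorem to match the scalar densities $\varrho_i$, then use an $\operatorname{SO}(k)$-valued gauge transformation to align the resulting unit-length vector parts, invoking the standing assumption (Remark~\ref{rem:obstruction1}) that the relevant topological obstructions vanish. The paper's argument is terser---it simply notes pointwise transitivity of $\operatorname{SO}(k)$ on $S^{k-1}$ and defers the global smoothness issue to Remark~\ref{rem:obstruction1}---whereas you spell out an explicit construction of $A$ via individual lifts $R_0,R_1\colon M\to\operatorname{SO}(k)$ through the principal $\operatorname{SO}(k-1)$-bundle $\operatorname{SO}(k)\to S^{k-1}$, but the underlying idea is the same.
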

\begin{proof}
Indeed, by Moser's theorem, the action of a diffeomorphsim on the densities $\varrho$ is transitive, as all such densities in  $\operatorname{Prob}(M)$  have the same total volume. This means that any vector half-density
$\mathbf{v}={v}\otimes \sqrt\varrho$ can be mapped to any other while matching the density
$\varrho$ and obtaining another vector current $\tilde{v}(x)$ of the same pointwise length as ${v}(x)$.
The currents $A\in C^\infty(M, {\rm SO}(k))$, being pointwise orthogonal transformations, do not change the vector length and act transitively on vectors of the same length at each point $x\in M$, which completes the proof. (Here we use the assumption on the absence of purely topological obstructions, see Remark \ref{rem:obstruction1}, so the local transitivity in the bundle extends to a global one.) 
\end{proof}

Note that the fibers of the corresponding fibration $\widetilde G\to \mathrm{VHProb}$ depend on the choice of the reference density $\mathbf{v}\in \mathrm{VHProb}$ and are given by its stabilizer. For instance, for a vector density 
$\mathbf{v}={v}\otimes \sqrt\varrho$ whose $v$-components are all the same in a certain coordinate system in $\mathbb{R}^k$, its stabilizer is the subgroup $\mathrm{Stab}_\mathbf{v}:=\operatorname{Diff}_\mu(M)  \ltimes C^\infty(M, \operatorname{SO}(k-1))$. Indeed, it consists of those diffeomorphisms $\varphi$ which preserve  density $\varrho=\lvert\mathbf{v}\rvert^2$ and at each point of $x\in M$ the subgroup ${\rm SO}(k-1)\subset {\rm SO}(k)$ fixes a given nonzero vector 
${v}(x)$.

\medskip

Consider now the principal bundle $\pi\colon G\ltimes N \to \mathrm{VHProb}$ given by the action on a fixed element $\mathbf{v}_0\in \mathrm{VHProb}$, i.e., $\pi(\varphi,A) = A\, \varphi_* \mathbf{v}_0$.
This fixed element corresponds, as we shall see, to the initial vector half-density in the transport problem.

\begin{lemma}\label{lem:div_free_vertical}
    The vertical distribution at $(\varphi,A)\in G\ltimes N$ is given by
    \begin{equation}
      V_{(\varphi,A)}(G\ltimes N) =     \{(u,a)\cdot (\varphi,A)\mid -L_u \mathbf{v} + a \mathbf{v} = 0, \; \mathbf{v} = A\, \varphi_* \mathbf{v}_0 \}.
    \end{equation}
    In particular, if $(u,a)\cdot (\varphi,A)$ is vertical, then
    \begin{equation}
        L_u \lvert\mathbf{v}\rvert^2 = 0,
    \end{equation}
    i.e., $u$ is divergence-free with respect to the density $\lvert\mathbf{v}\rvert^2$.
\end{lemma}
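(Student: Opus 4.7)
The plan is to identify the vertical distribution as the kernel of the differential of the projection $\pi\colon \widetilde{G}\to \mathrm{VHProb}$ defined by $\pi(\varphi,A)=A\varphi_*\mathbf{v}_0$. A tangent vector at $(\varphi,A)$ written as $(u,a)\cdot(\varphi,A)$ is the velocity at $t=0$ of a curve $(\varphi_t,A_t)$ through $(\varphi,A)$ whose logarithmic derivative at the identity is $(u,a)\in\mathfrak{X}(M)\oplus C^\infty(M,\mathfrak{so}(k))$. Applying $d\pi$ to this tangent vector reduces, by the chain rule, to the infinitesimal action at $\mathbf{v}=\pi(\varphi,A)$, and using the formula (2.7) already derived in the excerpt gives $d\pi\bigl((u,a)\cdot(\varphi,A)\bigr)=(u,a)\cdot \mathbf{v}=-L_u\mathbf{v}+a\mathbf{v}$. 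Setting this to zero yields the stated description of $V_{(\varphi,A)}(G\ltimes N)$.

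For the second statement, start from the vertical condition $L_u\mathbf{v}=a\mathbf{v}$ and pair both sides pointwise with $\mathbf{v}$. Since the tensor square of a half-density is a genuine density and the Lie derivative is a derivation with respect to tensor products, one has the Leibniz identity $L_u|\mathbf{v}|^2=2\langle L_u\mathbf{v},\mathbf{v}\rangle$. Substituting the vertical condition, $L_u|\mathbf{v}|^2=2\langle a\mathbf{v},\mathbf{v}\rangle$, and the latter vanishes pointwise because $a(x)\in\mathfrak{so}(k)$ is skew-symmetric, so $\langle a(x)v(x),v(x)\rangle=0$ at every $x\in M$. Consequently $L_u|\mathbf{v}|^2=0$, and since the divergence $\operatorname{div}_{|\mathbf{v}|^2}u$ is defined by $L_u|\mathbf{v}|^2=(\operatorname{div}_{|\mathbf{v}|^2}u)|\mathbf{v}|^2$, this is exactly the divergence-free condition claimed.

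The only step calling for attention is the Leibniz identity for half-densities, which I expect to be the main (mild) technical point. It is most transparent in the splitting $\mathbf{v}=v\otimes\sqrt{\varrho}$ employed throughout the paper: using $|\mathbf{v}|^2=|v|^2\varrho$ together with the expression $L_u\mathbf{v}=(L_u v+\tfrac{L_u\varrho}{2\varrho}v)\otimes\sqrt{\varrho}$ already displayed in the excerpt, one directly matches $L_u(|v|^2\varrho)=2(v\cdot L_u v)\varrho+|v|^2 L_u\varrho$ against $2\langle L_u\mathbf{v},\mathbf{v}\rangle$. Everything else then follows at once from the definition of the vertical distribution and the skew-symmetry of $\mathfrak{so}(k)$.
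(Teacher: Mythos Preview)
Your proof is correct and follows essentially the same route as the paper: the vertical distribution is identified as the kernel of $d\pi$ via the infinitesimal action formula, and the divergence-free claim comes from the Leibniz rule $L_u\lvert\mathbf v\rvert^2=2\langle L_u\mathbf v,\mathbf v\rangle$ combined with $L_u\mathbf v=a\mathbf v$ and skew-symmetry of $a$. Your explicit verification of the Leibniz identity in the $\mathbf v=v\otimes\sqrt{\varrho}$ splitting is a welcome elaboration that the paper leaves implicit.
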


\begin{proof}
    The first part follows since the projection $\pi$ is by left action on $\mathrm{VHProb}$.
    For the second part, notice  that if $(u,a)$ is vertical one has
    \begin{equation*}
        L_u \lvert\mathbf{v}\rvert^2 = 2 \mathbf{v}\cdot L_u \mathbf{v}  = \mathbf{v}\cdot (L_u \mathbf{v} - a \mathbf{v} ) = 0\,,
    \end{equation*}
    since $a$ is skew-symmetric.
\end{proof}
\smallskip


\subsection{Riemannian metrics and submersion}
Now we define a Riemannian metric on $\widetilde G= G\ltimes N$.

\begin{figure}
    \includegraphics{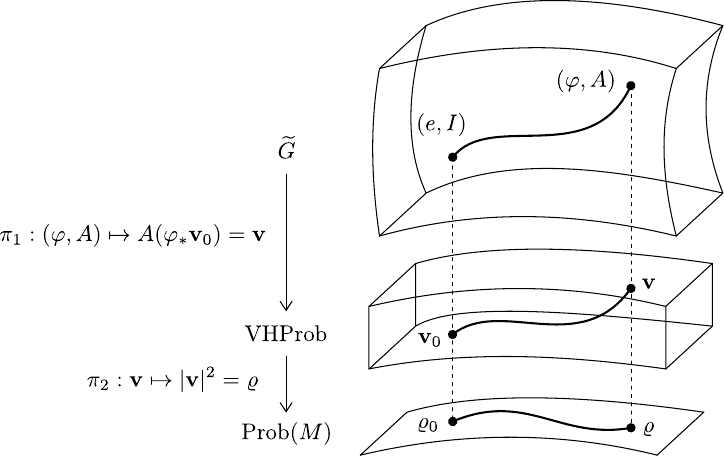}
    \caption{Illustration of the relation between the group, the vector half-densities, and the densities. The projections, first by the action of $\widetilde G$ on $\mathbf v_0$ and then by taking the squared norm of the vector half-density, give rise to Riemannian submersions.}
    \label{fig:enter-label}
\end{figure}
 
\begin{definition}\label{def:Bures}
The {\it Bures-type $L^2$-metric} on the group $\widetilde G=  \operatorname{Diff}(M)\ltimes C^\infty(M,\operatorname{SO}(k))$ 
associated with the fixed initial vector half-density $\mathbf{v}_0$ is given by 
\begin{equation}\label{eq:Bures_metric_vectors}
    \langle (u,a)\cdot (\varphi,A),(u,a)\cdot (\varphi,A)\rangle_{(\varphi,A)}
    = \int_M \left( \lvert u\rvert^2 +  \operatorname{tr}(aa^\top)  \right) \varphi_* \lVert \mathbf{v}_0 \rVert^2 \,.
\end{equation}
\end{definition}

This is a natural combination of the non-invariant $L^2$ metric on $\mathrm{Diff}(M)$ and the Bures metric on matrices.
The first term of this metric evaluates how much mass has been moved, while the second term measures how much
mass was necessary to rotate in the gauge group to relate their vector values.\footnote{One can also introduce a family of Bures-type metrics with different weighting of the these terms for specific applications, but it can be absorbed by rescaling of the second term and is omited below.}

\begin{proposition}\label{prop:nondegenerate}
The  Bures-type $L^2$-metric on the group $\widetilde G=  \operatorname{Diff}(M)\ltimes C^\infty(M,\operatorname{SO}(k))$ is nondegenerate.
\end{proposition}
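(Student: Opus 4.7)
The plan is to argue that the integrand in the metric \eqref{eq:Bures_metric_vectors} is pointwise non-negative, while the weight $\varphi_* \lVert \mathbf{v}_0 \rVert^2$ is pointwise strictly positive. Then vanishing of the integral forces the integrand to vanish pointwise, which in turn will give $(u,a)=0$ in the Lie algebra. Since every tangent vector at $(\varphi,A)\in \widetilde G$ is of the form $(u,a)\cdot (\varphi,A)$ for a unique $(u,a)\in \mathfrak{X}(M)\ltimes C^\infty(M,\mathfrak{so}(k))$, this will yield nondegeneracy of the metric at every point.

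First I would check pointwise non-negativity of the two summands. The term $\lvert u\rvert^2$ is non-negative by definition of the Riemannian metric on $M$. The term $\operatorname{tr}(aa^\top)$ is non-negative because, for $a(x)\in\mathfrak{so}(k)$, the matrix $a(x)a(x)^\top = -a(x)^2$ is positive semi-definite, and its trace is the squared Frobenius norm of $a(x)$; it vanishes at $x$ if and only if $a(x)=0$.

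Second I would verify strict positivity of the weight. By the definition of $\mathrm{VHProb}$, the reference half-density $\mathbf{v}_0$ is required to be nowhere vanishing on $M$, so $\lVert \mathbf{v}_0\rVert^2$ is a smooth strictly positive density on $M$. Since $\varphi$ is a diffeomorphism, the pushforward $\varphi_*\lVert \mathbf{v}_0\rVert^2$ is again a smooth strictly positive density on $M$.

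Combining these two facts, if the left-hand side of \eqref{eq:Bures_metric_vectors} vanishes, the integrand $\bigl(\lvert u\rvert^2 +\operatorname{tr}(aa^\top)\bigr)\,\varphi_*\lVert\mathbf{v}_0\rVert^2$ is a non-negative continuous density with zero total mass, hence it vanishes pointwise. Dividing by the strictly positive weight gives $\lvert u(x)\rvert^2 + \operatorname{tr}\bigl(a(x)a(x)^\top\bigr)=0$ for every $x\in M$, and by the semi-definiteness argument above this forces $u\equiv 0$ and $a\equiv 0$. I do not anticipate a serious obstacle here: the only subtlety worth flagging is the identification of $T_{(\varphi,A)}\widetilde G$ with the Lie algebra via the map $(u,a)\mapsto (u,a)\cdot(\varphi,A)$, which is a linear isomorphism so that the nondegeneracy statement at $(\varphi,A)$ reduces cleanly to the pointwise statement on $M$.
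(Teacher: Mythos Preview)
Your argument is correct and, in fact, cleaner than what the paper does. You directly show that the quadratic form defining the metric is positive definite at each tangent space: the integrand is a sum of two non-negative terms weighted by a strictly positive density, so vanishing of the integral forces $(u,a)=0$. This is the natural reading of ``nondegenerate'' for a (weak) Riemannian metric, and your observation that left translation $(u,a)\mapsto (u,a)\cdot(\varphi,A)$ identifies $T_{(\varphi,A)}\widetilde G$ with the Lie algebra is exactly what reduces the question to the pointwise one on $M$.

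The paper takes a different route: it argues at the level of the induced \emph{distance function} rather than the metric tensor, splitting into two cases according to whether two distinct group elements project to different scalar densities (invoking nondegeneracy of the classical Wasserstein distance) or lie in the same fiber (invoking nondegeneracy of the Killing form on $\operatorname{SO}(k)$). That argument is aiming at a somewhat different property---positivity of geodesic distance between distinct points---which for weak Riemannian metrics in infinite dimensions is not automatic from positive-definiteness of the tensor. Your proof is more elementary and settles the infinitesimal statement directly; the paper's version gestures toward the global distance, though less explicitly. Either way, for the proposition as stated your argument suffices.
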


\begin{proof} If two group elements move the reference vector density to two densities which differ by the underlying scalar one, the distance between them is positive due to the first term in the distance function. Indeed, the first term in the distance \eqref{eq:Bures_metric_vectors} corresponds to  the standard $L^2$ metric on diffeomorphisms, and  the above metric projects to the classical Wasserstein metric on scalar densities, while the latter  is known to be nondegenerate. 

Now assume that two different group elements belong to the same fiber, and hence differ at some point on $M$ by a nontrivial gauge transformation. Due to the smooth depence on the point, there is some neighborhood 
on $M$ where they differ. Now the nondegeneracy of the metric \eqref{eq:Bures_metric_vectors} along the fiber, i.e. in its second term, follows from that in the finite-dimensional setting, the nondegeneracy of the metric on $\operatorname{SO}(k)$ induced by the Killing form.
\end{proof}


Once the metric on the group  is fixed, one can give a dynamical formulation of the corresponding optimal transport problem on the space of vector densities.

\begin{definition}\label{def:vectorWass}
    The \emph{vector Wasserstein distance} $\mathit{WV}(\mathbf{v}_0,\mathbf{v}_1)$ between densities $\mathbf{v}_0,\mathbf{v}_1 \in \mathrm{VHProb}$ is given by the following formula:
    \[
        \mathit{WV}^2(\mathbf{v}_0,\mathbf{v}_1) = \inf_{u,a,\mathbf{v}}\int_0^1 \int_M \left( \lvert u\rvert^2 +  \operatorname{tr}(aa^\top) \right) \lvert\mathbf{v}\rvert^2 \, dt\,,
    \]
    over time-dependent vector fields $u(t)\in \mathfrak{X}(M)$, vector half-densities $\mathbf{v}(t) \in \mathrm{VHProb}$, and matrix valued functions $a(t) \in C^\infty(M, \mathfrak{so}(k))$ related by the constraints
    \[
        \dot{\mathbf{v}} = -L_u \mathbf{v} + a \mathbf{v} ,\quad \mathbf{v}(0) = \mathbf{v}_0, \quad \mathbf{v}(1) = \mathbf{v}_1.
    \]  
\end{definition}

\medskip

The corresponding horizontal distribution on the group is described as follows.

\begin{proposition}\label{prop:horizontal}
    The horizontal distribution at $(\varphi,A)\in G\ltimes N=\operatorname{Diff}(M)\ltimes C^\infty(M,\operatorname{SO}(k))$ is given by
    \begin{multline*}
        H_{(\varphi,A)}(G\ltimes N) = \\
        \left\{
            \left(
                \langle \nabla\theta,{v}\rangle 
                - \langle \nabla{v},\theta\rangle
                ,
                \theta\otimes{v}-{v}\otimes\theta
            \right) \mid {\it for ~all~} \,\theta\in C^\infty(M,\mathbb{R}^k)
        \right\},
    \end{multline*}
    where $\mathbf{v} = (\varphi,A)\cdot \mathbf{v}_0 = {v}\otimes\sqrt\varrho$.
\end{proposition}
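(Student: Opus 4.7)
The plan is to identify the horizontal distribution at $(\varphi,A)$ as the orthogonal complement, with respect to the Bures metric~\eqref{eq:Bures_metric_vectors}, of the vertical distribution of Lemma~\ref{lem:div_free_vertical}. I use the convention $|v(x)|\equiv 1$ already stipulated in Section~\ref{sect:group-for-half-densities}, giving $|\mathbf{v}|^2=\varrho$ and $u(|v|^2)=0$ for every vector field $u$; with this convention the vertical equation $-L_{u_v}\mathbf{v}+a_v\mathbf{v}=0$ splits, on projecting onto $v$ and $v^\perp$, into $L_{u_v}\varrho=0$ and $a_v v=L_{u_v}v$. The first step is to isolate the matrix component of a horizontal vector by testing against the ``pure gauge'' vertical vectors $(0,a_v)$, characterized by $a_v v\equiv 0$, i.e.\ $a_v(x)\in\operatorname{Stab}_{v(x)}\subset\mathfrak{so}(k)$. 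Smooth variation turns $\int_M\operatorname{tr}(a_h a_v^\top)\varrho=0$ into a pointwise condition forcing $a_h(x)$ to be trace-orthogonal to $\operatorname{Stab}_{v(x)}$; a dimension count together with the easy identity $\operatorname{tr}\bigl(A(\theta v^\top-v\theta^\top)\bigr)=0$ whenever $Av=0$ identifies this complement with $\{\theta v^\top-v\theta^\top:\theta\in\mathbb R^k\}$, so $a_h=\theta\otimes v-v\otimes\theta$ for some $\theta\in C^\infty(M,\mathbb R^k)$.

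With $a_h$ of this form, I verify that $u_h=\langle\nabla\theta,v\rangle-\langle\nabla v,\theta\rangle$ makes the pair orthogonal to every vertical $(u_v,a_v)$. One computes $\operatorname{tr}(a_h a_v^\top)=2\langle\theta,a_v v\rangle=2\langle\theta,L_{u_v}v\rangle$ and $\langle u_h,u_v\rangle=\langle v,L_{u_v}\theta\rangle-\langle\theta,L_{u_v}v\rangle$, and the two pieces telescope:
\[
\int_M\!\bigl(\langle u_h,u_v\rangle+\operatorname{tr}(a_h a_v^\top)\bigr)|\mathbf{v}|^2
=\int_M\! L_{u_v}\langle\theta,v\rangle\,\varrho
=-\int_M\!\langle\theta,v\rangle\, L_{u_v}\varrho=0,
\]
after one integration by parts on the closed manifold $M$, using $L_{u_v}\varrho=0$.

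For the reverse inclusion I run the same argument backward: with $a_h=\theta v^\top-v\theta^\top$ fixed by the first step, orthogonality against all vertical $u_v$ reduces to $\int_M\langle u_h-(\langle\nabla\theta,v\rangle-\langle\nabla v,\theta\rangle),u_v\rangle\varrho=0$ for every $\varrho$-divergence-free $u_v$. The Hodge decomposition on $(M,\varrho)$ yields $u_h=\langle\nabla\theta,v\rangle-\langle\nabla v,\theta\rangle+\nabla\phi$ for some function $\phi$, and the shift $\theta\mapsto\theta+\phi v$ leaves $a_h$ unchanged (the contribution $\phi(vv^\top-vv^\top)$ is zero) and, using $|v|=1$ and $\langle v,\nabla v\rangle=\tfrac12\nabla|v|^2=0$, adds exactly $\nabla\phi$ to $u_h$, absorbing the gradient ambiguity into the parameter $\theta$. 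The main delicacy is the telescoping identity above: without the normalization $|v|\equiv 1$ an extra term $\int_M\langle\theta,v\rangle\,u_v(|v|^2)\,\varrho$ would survive, and this is the one place where the unit vector representative really enters.
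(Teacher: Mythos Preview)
Your argument is correct and takes a genuinely different route from the paper's. The paper derives Proposition~\ref{prop:horizontal} in one stroke from the momentum map in Lemma~\ref{lem:momentum_maps}: since $\langle (u,a),\Phi(\mathbf v,\boldsymbol\theta)\rangle=\langle\boldsymbol\theta,(u,a)\cdot\mathbf v\rangle$, the vertical space is the annihilator of the image of $\Phi(\mathbf v,\cdot)$, and applying the inverse of the inertia operator $(u,a)\mapsto(u^\flat\otimes\varrho,a\otimes\varrho)$ to that image gives precisely the stated horizontal vectors. Your proof, by contrast, is a direct orthogonal-complement computation: you first test against the pure-gauge verticals $(0,a_v)$ with $a_v v=0$ to pin down $a_h=\theta v^\top-v\theta^\top$ via the pointwise decomposition $\mathfrak{so}(k)=\operatorname{Stab}_v\oplus\{\theta v^\top-v\theta^\top\}$, then verify the telescoping identity for the full orthogonality, and finally use the Hodge decomposition together with the shift $\theta\mapsto\theta+\phi v$ to absorb the residual gradient.

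What each buys: the paper's momentum-map argument is conceptual and transfers verbatim to the matrix setting (Lemma~\ref{lem:momentum_map_matrix_case}), but it tacitly uses that the annihilator of the vertical space coincides with the image of $\Phi$, a double-annihilator step that is formal in infinite dimensions. Your approach is longer but entirely elementary, avoids the Poisson machinery of Appendix~\ref{sec:momentum_map_appendix}, and in fact handles that closedness issue explicitly through the Hodge decomposition step. Your remark that the normalization $|v|\equiv 1$ is exactly what makes the telescoping work (and what makes $\theta\mapsto\theta+\phi v$ leave $a_h$ unchanged while shifting $u_h$ by $\nabla\phi$) is a nice clarification of where the half-density representative is genuinely used.
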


Note that this description of the horisontal distribution is much less explicit than 
the one for the group $\operatorname{Diff}(M)$ with the $L^2$-metric fibered over scalar densities: in that case $H_{\varphi}(G)$ is given by gradient vector fields $\{\nabla\theta \mid {\it for ~all~} \,\theta\in C^\infty(M,\mathbb{R})\}$ and it does not depend on the base point.  In the semidirect product setting above, the horisontal distribution can be understood as a continuous combination of gradients and, moreover, it depends on the base point via the direction vector $v$, but not on the density part $\varrho$.
\medskip

We are proving this proposition below, but now note that such a description allows one to define a metric on vector densities  inherited from the metric on the group 
via a Riemannian submersion. As its corollary we obtain the following key statement.

\begin{theorem}\label{thm:vectorRiemSub}
    The metric defined on the group
    $\widetilde G=G\ltimes N$ descends to the space of vector half-densities $\mathrm{VHProb}$ 
    as a Riemannian submersion, where, for $\dot{\mathbf{v}}\in T_\mathbf{v} \mathrm{VHProb}$ with $\mathbf{v} = {v}\otimes\sqrt\varrho$, it is given by
    \[
        \langle \dot{\mathbf{v}}, \dot{\mathbf{v}}\rangle_{v} = \int_M \left( \lvert \langle \underbrace{\nabla\theta,{v}\rangle - \langle \nabla{v},\theta\rangle}_{u}\rvert^2 +  \lvert \theta\rvert^2 \right)\varrho, 
    \]
    where $\theta\in C^\infty(M,\mathbb{R}^k)$ is (implicitly) defined by
    \[
        \dot{\mathbf{v}} = -L_{u}\mathbf{v} + a \mathbf{v} 
    \]
    with $u$ and $a$ given, respectively, by $u=\langle\nabla\theta,{v}\rangle
     - \langle \nabla{v},\theta\rangle$ and $a=\theta\otimes{v}-{v}\otimes\theta$.    
\end{theorem}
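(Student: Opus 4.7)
The plan is to verify the two defining properties of a Riemannian submersion for $\pi:\widetilde G\to\mathrm{VHProb}$, $(\varphi,A)\mapsto A\varphi_*\mathbf v_0$: first, that the horizontal distribution of Proposition~\ref{prop:horizontal} is the orthogonal complement, with respect to the Bures-type metric of Definition~\ref{def:Bures}, of the vertical distribution of Lemma~\ref{lem:div_free_vertical}; and second, that $d\pi$ maps this horizontal distribution bijectively onto $T_{\mathbf{v}}\mathrm{VHProb}$. Once this is established, the explicit formula on the base is obtained simply by computing the norm of a horizontal lift parametrized by $\theta$.

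The first step is pointwise orthogonality of $H$ and $V$. Taking a horizontal $(u_H,a_H)$ parametrized by $\theta$ and a vertical $(u_V,a_V)$ satisfying $a_V\mathbf v=L_{u_V}\mathbf v$, the skew-symmetry of $a_V$ combined with the rank-one structure $a_H=\theta\otimes v-v\otimes\theta$ collapses the matrix pairing to $\operatorname{tr}(a_Ha_V^\top)=2\,\theta\cdot a_Vv$, while the vector-field pairing unfolds as $u_H\cdot u_V=v\cdot L_{u_V}\theta-\theta\cdot L_{u_V}v$. Substituting the identity $a_Vv=L_{u_V}v$, which is a pointwise consequence of the vertical condition under the normalization $|v|\equiv 1$ together with $L_{u_V}\varrho=0$ from Lemma~\ref{lem:div_free_vertical}, the sum telescopes to the Lie derivative $L_{u_V}(v\cdot\theta)$. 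Multiplied by $\varrho$ this is the total Lie derivative $L_{u_V}((v\cdot\theta)\varrho)$, whose integral over the closed manifold $M$ vanishes by Stokes.

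For the horizontal lift, given $\dot{\mathbf v}\in T_{\mathbf{v}}\mathrm{VHProb}$ one seeks $\theta$ solving $-L_{u_H}\mathbf v+a_H\mathbf v=\dot{\mathbf v}$. Using $a_Hv=\theta-(v\cdot\theta)v$ and $\mathbf v=v\otimes\sqrt\varrho$, this becomes a linear second-order equation for $\theta$ whose principal symbol, inherited from $L_{u_H}$ via $u_H=\langle v,\nabla\theta\rangle-\langle\theta,\nabla v\rangle$, is of Laplace--Beltrami type. Standard Fredholm theory on the compact $M$ then gives unique solvability after fixing the residual ambiguity $\theta\mapsto\theta+cv$ for $c$ constant. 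Combined with the previous step, this yields the orthogonal decomposition $T_{(\varphi,A)}\widetilde G=H\oplus V$, and hence the submersion property.

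Finally, since the base metric is defined to make $d\pi|_H$ an isometry, the formula for $\langle\dot{\mathbf v},\dot{\mathbf v}\rangle_{\mathbf{v}}$ is obtained simply by evaluating $\|(u_H,a_H)\|^2_{(\varphi,A)}$. The $u$-part contributes $|u_H|^2\varrho$ directly. For the $a$-part, a direct expansion gives $\operatorname{tr}(a_Ha_H^\top)=2(|v|^2|\theta|^2-(v\cdot\theta)^2)$, which upon choosing the gauge $\theta\perp v$ and using $|v|\equiv 1$ reduces to $2|\theta|^2$, matching the stated formula up to the conventional Bures normalization. The main obstacle I anticipate is the elliptic analysis in the lifting step: verifying that the operator $\theta\mapsto -L_{u_H}\mathbf v+a_H\mathbf v$ is genuinely elliptic despite the semidirect-product coupling between the $u$- and $a$-components, and pinning down its finite-dimensional kernel, in order to make rigorous the existence and uniqueness of the horizontal lift in the appropriate function-space setting.
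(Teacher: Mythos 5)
Your first two steps are sound in outline and essentially reconstruct the paper's momentum-map computation by hand: the orthogonality calculation (skew-symmetry giving $\operatorname{tr}(a_Ha_V^\top)=2\,\theta\cdot a_Vv$, the vertical identities $L_{u_V}\varrho=0$ and $a_Vv=L_{u_V}v$ from Lemma~\ref{lem:div_free_vertical}, then Stokes) is precisely the content of Lemma~\ref{lem:momentum_maps} and Proposition~\ref{prop:horizontal}. The paper's own proof of the theorem is shorter and different in emphasis: it observes that the metric \eqref{eq:Bures_metric_vectors} depends on the fiber point only through $\varphi_*\lVert\mathbf v_0\rVert^2=\lvert\mathbf v\rvert^2$, hence is invariant along the isotropy and descends, and then quotes Proposition~\ref{prop:horizontal} for the explicit form; your route makes the fiber-independence implicit but covers the same content. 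Your lifting step is an addition the paper does not attempt (it works formally with weak metrics), but the ellipticity claim is not correct as stated: the second-order terms in $\theta$ enter only through $\operatorname{div}(\varrho\,u_H)$, so they act on the scalar $v\cdot\theta$ and produce output along $v$, while the $v$-orthogonal part of $\theta$ appears only at order zero and one. The system is therefore degenerate (triangular) rather than of Laplace--Beltrami type on all of $\theta$, and the Fredholm argument would have to be restructured accordingly.

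The genuine flaw is the final ``gauge $\theta\perp v$''. As you yourself establish in the lifting step, for a fixed $\dot{\mathbf v}$ the only residual freedom is $\theta\mapsto\theta+cv$ with \emph{constant} $c$: replacing $\theta$ by $\theta+fv$ leaves $a_H$ unchanged but shifts $u_H$ by $\nabla f$ (for $\lvert v\rvert\equiv 1$), hence changes the tangent vector being lifted. So the pointwise component $(v\cdot\theta)v$ cannot be projected away --- in the scalar case $k=1$ it is \emph{all} of $\theta$. What your computation actually yields is $\operatorname{tr}(a_Ha_H^\top)=2\bigl(\lvert\theta\rvert^2-(v\cdot\theta)^2\bigr)=2\lvert\theta-(v\cdot\theta)v\rvert^2$, so the descended metric is $\int_M\bigl(\lvert u\rvert^2+2\lvert\theta-(v\cdot\theta)v\rvert^2\bigr)\varrho$; note this correctly collapses to Otto's metric $\int_M\lvert\nabla\theta\rvert^2\varrho$ when $k=1$, whereas a genuine $+\lvert\theta\rvert^2$ term would not. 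The consistent reading is that only the $v$-orthogonal part of $\theta$ enters $a$ (indeed $a\mathbf v=(\theta-(v\cdot\theta)v)\otimes\sqrt\varrho$), so the $\lvert\theta\rvert^2$ in the theorem is to be understood with $\theta$ so normalized and with the chosen normalization of the trace form --- not obtained by re-choosing $\theta$ for the same $\dot{\mathbf v}$. Replace the gauge-fixing sentence by simply reporting the group norm $\int_M\bigl(\lvert u\rvert^2+\operatorname{tr}(aa^\top)\bigr)\varrho$ of the horizontal lift, which is what the Riemannian-submersion definition requires.
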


\begin{proof}
    Let $(\eta,B) \in \widetilde G$ be an element of the isotropy group for $\mathbf{v}\in \mathrm{VHProb}$.
    Then, from Lemma~\ref{lem:div_free_vertical} it follows that $\eta$ is volume preserving with respect to the density $\lvert\mathbf{v}\rvert^2$, i.e., $\eta_*\lvert\mathbf{v}\rvert^2 = \lvert\mathbf{v}\rvert^2$.
    Then from the formula~\eqref{eq:Bures_metric_vectors} it  follows that the metric descends to the quotient and therefore corresponds to a Riemannian submersion.
    From Proposition~\ref{prop:horizontal} we then get an explicit form of the descending metric.
\end{proof}


A proof of Proposition \ref{prop:horizontal} on 
the description of horizontal spaces, implying the Riemannian submersion property, can be obtained from the following explicit form of the momentum map for that group.

\begin{lemma}\label{lem:momentum_maps}
    The momentum map $\Phi\colon T^* \mathrm{VHProb}\to \big(\mathfrak{X}(M)\ltimes C^\infty(M,\mathfrak{so}(k))\big)^*$ for the Hamiltonian action of $\widetilde G=G\ltimes N$ on $T^*\mathrm{VHProb}$, under the natural $L^2$-pairing of $\mathrm{VHProb}$ with itself, is given by
    \begin{equation}\label{eq:half}
        \Phi({v}\otimes \sqrt\varrho, \theta\otimes\sqrt\varrho)
        =  \Bigg( \frac{1}{2}\Big(\langle d\theta,{v}\rangle - \langle d{v},\theta\rangle \Big)\otimes\varrho,  \frac{1}{2}\Big(\theta\otimes{v}-{v}\otimes\theta\Big)\otimes\varrho\Bigg).
    \end{equation}
\end{lemma}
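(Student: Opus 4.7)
The plan is to apply the standard formula for the momentum map of a cotangent-lifted action. For a Lie group $H$ acting on a manifold $Q$ with infinitesimal generator $\xi_Q$, the cotangent lift has momentum map characterized by
\[
\langle \Phi(q,p),\xi\rangle_{\mathfrak{h}^*\times\mathfrak{h}} = \langle p, \xi_Q(q)\rangle_{T^*_qQ\times T_qQ}.
\]
Here $Q$ is (the ambient linear space of) vector half-densities containing $\mathrm{VHProb}$, $H=\widetilde G$, and we identify the cotangent fiber at $\mathbf{v}={v}\otimes\sqrt\varrho$ with vector half-densities via the natural $L^2$-pairing $\langle \theta_1\otimes\sqrt{\varrho_1},\theta_2\otimes\sqrt{\varrho_2}\rangle=\int_M\langle\theta_1,\theta_2\rangle\sqrt{\varrho_1\varrho_2}$; in particular, a cotangent vector at $\mathbf{v}$ can be represented as $\theta\otimes\sqrt\varrho$. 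The infinitesimal action is already given by $(u,a)\cdot\mathbf{v}=-L_u\mathbf{v}+a\mathbf{v}$, so the whole proof reduces to computing $\langle \theta\otimes\sqrt\varrho,\,-L_u\mathbf{v}+a\mathbf{v}\rangle_{L^2}$ and reading off the two components.

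First I would handle the gauge direction. Taking $\xi=(0,a)$ with $a(x)\in\mathfrak{so}(k)$,
\[
\langle p, a\mathbf{v}\rangle_{L^2}=\int_M \theta^{\top}a v\,\varrho =\int_M\operatorname{tr}\bigl(a\cdot v\theta^{\top}\bigr)\varrho.
\]
Since $a$ is pointwise skew-symmetric, only the skew part of $v\theta^\top$ survives, which (under the trace pairing on $\mathfrak{so}(k)$ used in the Bures metric) yields the second component $\tfrac12(\theta\otimes v-v\otimes\theta)\otimes\varrho$ claimed in the lemma. Sign conventions for the outer product should be checked once, but the antisymmetrization is forced by $a\in\mathfrak{so}(k)$.

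Next I would handle the diffeomorphism direction $\xi=(u,0)$. Using the formula for the Lie derivative on half-densities given earlier in the section,
\[
-L_u\mathbf{v}=-\Bigl(L_u v+\tfrac{L_u\varrho}{2\varrho}v\Bigr)\otimes\sqrt\varrho,
\]
so the pairing becomes
\[
\langle p,-L_u\mathbf{v}\rangle=-\int_M\langle\theta,L_u v\rangle\varrho-\tfrac12\int_M\langle\theta,v\rangle\,L_u\varrho.
\]
Integration by parts via $\int_M L_u f\cdot\varrho=-\int_M f\,L_u\varrho$ on the closed manifold $M$ converts the second term, and after recombination the expression symmetrizes into
\[
\tfrac12\int_M\bigl(\langle L_u\theta,v\rangle-\langle\theta,L_u v\rangle\bigr)\varrho=\tfrac12\int_M i_u\bigl(\langle d\theta,v\rangle-\langle dv,\theta\rangle\bigr)\varrho,
\]
from which one reads off the first component $\tfrac12(\langle d\theta,v\rangle-\langle dv,\theta\rangle)\otimes\varrho$.

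The main obstacle is purely bookkeeping: carrying the $\sqrt\varrho$ factor correctly through the Lie derivative (so that the $\frac{L_u\varrho}{2\varrho}$ term appears) and performing the integration by parts in the right order so that everything assembles into a manifestly antisymmetric expression. The constraint $\int_M|\mathbf{v}|^2=1$ cutting out $\mathrm{VHProb}$ inside the ambient linear space causes no issue: the $\widetilde G$-action preserves this constraint, so the infinitesimal generators are automatically tangent, and any two cotangent representatives differing by a multiple of $\mathbf{v}$ pair identically against such tangent vectors. Hence the formula \eqref{eq:half} is well-defined and the computation above proves it.
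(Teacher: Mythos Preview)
Your proof is correct and follows essentially the same approach as the paper: both use the defining identity $\langle \Phi(\mathbf v,\boldsymbol\theta),(u,a)\rangle=\langle \boldsymbol\theta,(u,a)\cdot\mathbf v\rangle$ for the cotangent-lifted momentum map, expand the infinitesimal action via the half-density Lie derivative, integrate by parts in the $u$-direction, and antisymmetrize in the $a$-direction using $a\in\mathfrak{so}(k)$. Your treatment is slightly more explicit in separating the two components and in addressing the well-definedness on the constrained level set $\int_M|\mathbf v|^2=1$, which the paper leaves implicit.
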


\begin{proof}
    Recall that elements $\mathbf{v}\in \mathrm{VHProb}$ can be expressed as a vector current  ${v}\otimes \sqrt{\varrho}$ for ${v}\colon M\to S^{k-1}$ and normalized density $\varrho\in\operatorname{Prob}(M)$.
    Correspondingly, we write $\boldsymbol{\theta} \in T_\mathbf{v}^*\mathrm{VHProb}$ as $\boldsymbol{\theta} = \theta\otimes\sqrt\varrho$ for some function $\theta\colon M\to \mathbb{R}^k$.
    Take now $(u,a) \in \mathfrak{g}\ltimes\mathfrak{n}$.
    Recall that the cotangent lifted action is always Hamiltonian, i.e., given by Hamiltonian vector fields (cf.~Appendix~\ref{sec:momentum_map_appendix}).
    The momentum map is then defined by
    \[
        \langle (u,a), \Phi(\mathbf{v},\boldsymbol{\theta})\rangle = \langle (u,a)\cdot \mathbf{v}, \boldsymbol{\theta} \rangle \qquad
        \forall \, (u,a) \in \mathfrak{g}\ltimes \mathfrak{n}.
    \]
    Since $(u,a)\cdot \mathbf{v} = -L_u ({v}\otimes \sqrt\varrho) + a {v}\otimes\sqrt\varrho$, we get
    \begin{multline*}
        \langle (u,a)\cdot \mathbf{v}, \boldsymbol{\theta}\rangle = \int_M \left( -L_u{v} + \frac{-L_u\varrho}{2\varrho} + a{v} \right)\cdot \theta\; \varrho =  \\
        \int_M \langle u, -\frac{1}{2}(d {v}\cdot \theta - d\theta\cdot {v})\rangle \, \varrho - \int_M \operatorname{tr}\big(a(\theta\otimes{v})\big)\, \varrho .
    \end{multline*}
    Since 
    \begin{equation*}
        -\operatorname{tr}(a(\theta\otimes v)) = \frac{1}{2}\operatorname{tr}(a (\theta\otimes v - v\otimes \theta)^\top)    
    \end{equation*}
    the result now follows.
\end{proof}

{\it Proof of Proposition \ref{prop:horizontal}} now immediately follows from the above description of the momentum map $\Phi$  since, by construction, the horizontal distribution $H_{(\varphi,A)}(G\ltimes N)$ 
is given by the condition
    \[         \langle (u,a), \Phi(\mathbf{v},\boldsymbol{\theta}) \rangle = 0     \]
    if and only if $(u,a)\cdot (\varphi,A)$ is vertical. \hfill$\square$




\subsection{Geodesics and Hamiltonian equations}

We start by deriving the coadjoint operator for the Lie algebra  $\tilde{\mathfrak{g}}=\mathfrak{g}\ltimes\mathfrak{n}$
of the group $\widetilde G=  \operatorname{Diff}(M)\ltimes C^\infty(M,\operatorname{SO}(k))$.

\begin{lemma}\label{lem:ad_star}
    The $\operatorname{ad}^*$ map for the Lie algebra $\tilde{\mathfrak{g}}=\mathfrak{g}\ltimes\mathfrak{n}$ is given by
    \begin{equation*}
        \operatorname{ad}^*_{(u,a)}(m\otimes\varrho, \beta\otimes\varrho) = \left(
            -L_u (m\otimes\varrho) + \operatorname{tr}(\beta d a)\otimes\varrho,
            -L_u (\beta\otimes\varrho),
         \right)
    \end{equation*}
    where $(m\otimes\varrho, \beta\otimes\varrho)\in \tilde{\mathfrak{g}}^*\simeq\mathfrak{g}^*\oplus\mathfrak{n}^*$
    and $m\in     \Omega^1(M)$, $\beta \in  C^\infty(M, {\rm SO}(k))$, while $\varrho\in  \Omega^n(M) $, and  the tensor products are over $C^\infty(M,\mathbb R)$.
\end{lemma}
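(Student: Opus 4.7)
The plan is a direct unwinding of the definition $\langle \operatorname{ad}^*_{(u,a)}\alpha,(v,b)\rangle = \langle \alpha,[(u,a),(v,b)]\rangle$. First I would write the Lie bracket on $\tilde{\mathfrak{g}}=\mathfrak{X}(M)\ltimes C^\infty(M,\mathfrak{so}(k))$ in the form dictated by the semi-direct product convention of Section~\ref{sec:classical_OT}, namely
\[
    [(u,a),(v,b)] = \bigl([u,v],\,-L_u b + L_v a + [a,b]\bigr),
\]
and then fix the $L^2$-pairing identifying $\tilde{\mathfrak{g}}^*$ with pairs $(m\otimes\varrho,\beta\otimes\varrho)$ via
\[
    \langle (u,a),(m\otimes\varrho,\beta\otimes\varrho)\rangle = \int_M m(u)\,\varrho + \int_M \operatorname{tr}(\beta^\top a)\,\varrho.
\]

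Expanding $\langle (m\otimes\varrho,\beta\otimes\varrho),[(u,a),(v,b)]\rangle$ produces four terms that I would handle one at a time. The pure vector-field term $\int_M m([u,v])\varrho$ is the classical computation: from $m([u,v]) = L_u(m(v)) - (L_u m)(v)$ together with $\int_M L_u(f\varrho)=0$ on compact $M$, one rearranges this into $\langle -L_u(m\otimes\varrho),v\rangle$. The gauge term $-\int_M \operatorname{tr}(\beta^\top L_u b)\varrho$ integrates by parts in exactly the same way, producing $\langle -L_u(\beta\otimes\varrho),b\rangle$ and accounting for the second component of the claimed formula. The mixing term $\int_M \operatorname{tr}(\beta^\top L_v a)\varrho$ is the key new piece: since $v$ enters only through a directional derivative of $a$, it is a contribution to the first (momentum) component, and writing $L_v a = (da)(v)$ identifies the resulting $1$-form as $\operatorname{tr}(\beta^\top da)$ — precisely the mixing contribution $\operatorname{tr}(\beta\, da)\otimes\varrho$ of the statement (up to the $\mathfrak{so}(k)$-pairing sign).

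The main obstacle I expect is bookkeeping of signs and conventions in the $\mathfrak{so}(k)$-pairing. Since $\beta^\top=-\beta$, the Frobenius pairing $\operatorname{tr}(\beta^\top a)$ and the Killing-type pairing $-\operatorname{tr}(\beta a)$ differ only by a sign, and one must match whichever convention is implicit in the author's $\operatorname{tr}(\beta\,da)$ notation. A related subtlety is the pointwise-bracket term $\int_M \operatorname{tr}(\beta^\top[a,b])\varrho$: the cyclic property of the trace would naively generate an additional $\operatorname{ad}^*_a\beta\otimes\varrho$ contribution in the second component, which is absent from the stated formula. I would verify that this contribution is either absorbed into the interpretation of the transported field or, for the purposes of this lemma, that $\mathfrak{n}$ is effectively regarded as abelian — consistent with the scalar semi-direct product example of Section~\ref{sec:classical_OT}. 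Once these conventions are pinned down, reassembling the four contributions yields exactly the stated expression.
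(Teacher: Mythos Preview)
Your approach is correct and essentially identical to the paper's: both pair the dual element against the Lie bracket and integrate by parts term by term, with the mixing term handled via $L_v a = \iota_v\, da$. Your suspicion about the pointwise bracket is exactly what happens --- the paper's proof begins from $\operatorname{ad}_{(u_1,a_1)}(u_2,a_2) = (L_{u_1}u_2,\, L_{u_1}a_2 - L_{u_2}a_1)$ with no $[a_1,a_2]$ term, so $\mathfrak{n}$ is indeed treated as abelian here (note also that the sign in the second component is opposite to the one displayed in Example~2.5, which resolves the sign bookkeeping you were worried about).
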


\begin{proof}
    We have that $\operatorname{ad}_{(u_1,a_1)}(u_2,a_2) = (L_{u_1} u_2, L_{u_1}a_2 - L_{u_2}a_1)$.
    Thus, for variables $(m\otimes\varrho, \beta\otimes\varrho)$ dual to the Lie algebra,
    \begin{align*}
        &\langle \operatorname{ad}_{(u_1,a_1)}^*(m\otimes\varrho, \beta\otimes\varrho), (u_2,a_2)\rangle
        = \langle L_{u_1}u_2, m\otimes\varrho \rangle + \langle L_{u_1} a_2 - L_{u_2}a_1 , \beta\otimes\varrho \rangle = \\
        & \langle u_2, -L_{u_1}(m\otimes \varrho)\rangle 
        + \langle a_2, -L_{u_1}(\beta\otimes\varrho)\rangle 
        - \langle \iota_{u_2} da_1, \beta\otimes\varrho\rangle = \\
        & \langle u_2, -L_{u_1}(m\otimes \varrho) + \operatorname{tr}(\beta da_1)\otimes\varrho\rangle 
        + \langle a_2, -L_{u_1}(\beta\otimes\varrho)\rangle \,,
    \end{align*}
    which implies the result.
\end{proof}

The geodesic equations for the Bures-type metric in Definition~\ref{def:Bures}, as a Hamiltonian system on the Lie--Poisson space $\tilde{\mathfrak{g}}^*\simeq\mathfrak{g}^*\oplus\mathfrak{n}^*$, is given by the following theorem.

\begin{theorem}\label{prop:governing_abstract}
    Consider the Hamiltonian 
    on $T^*\widetilde G= T^*(\operatorname{Diff}(M)\ltimes C^\infty(M,\operatorname{SO}(k)))$
    of the form
    \begin{equation*}
        H(u^\flat\otimes\varrho, a\otimes\varrho, \mathbf{v}) = \frac{1}{2}\int_M \frac{\lvert u^\flat\otimes \varrho \rvert^2}{\lvert\mathbf{v}\rvert^2} + \frac{1}{2}\int_M \frac{\varrho}{\lvert\mathbf{v}\rvert^2}\operatorname{tr}\left(aa^\top\right) \varrho  .
    \end{equation*}
    corresponding to the kinetic energy Lagrangian on $T\widetilde G$ and 
    given in reduced coordinates $u = \dot\varphi\circ\varphi^{-1}$, $\varrho = \varphi_*|\mathbf{v}_0|^2$, $\mathbf{v} = \varphi_*\mathbf{v}_0$, and $a = \dot A A^{-1}$.
    Then the Hamiltonian equations of motion corresponding to the geodesic equations on $\widetilde G$ for the  optimal transport on vector densities are
    \begin{equation*}
        \dot{u} + \nabla_{{u}}{u} = 0 , \quad \dot a + \nabla_u a = 0, \quad \dot{\mathbf{v}} + L_{{u}} \mathbf{v} =  a \mathbf{v} .
    \end{equation*}
In particular, the scalar density $\varrho$ defined by $\varrho = \lvert\mathbf{v}\rvert^2$
is transported by $u$: 
$$\dot \varrho +L_u \varrho=0.$$
\end{theorem}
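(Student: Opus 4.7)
The plan is to apply Euler--Poincaré reduction with advected parameters (in the Holm--Marsden--Ratiu framework) to the semidirect group $\widetilde G$, treating $\mathbf{v} = A\,\varphi_*\mathbf{v}_0 \in \mathrm{VHProb}$ as the advected variable. On the reduced space $\tilde{\mathfrak g}^* \oplus \mathrm{VHProb}$ the dynamics takes the form
\begin{equation*}
\dot\mu = \operatorname{ad}^*_{(u,a)}\mu + \frac{\delta H}{\delta \mathbf{v}}\diamond\mathbf{v}, \qquad \dot{\mathbf{v}} = -L_u\mathbf{v} + a\mathbf{v},
\end{equation*}
with $\mu = (u^\flat\otimes\varrho,\, a\otimes\varrho)$ and $\operatorname{ad}^*$ furnished by Lemma~\ref{lem:ad_star}. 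The second equation is exactly the infinitesimal-action formula~\eqref{eq:inf_action_on_v}, so the third equation of the theorem is free.

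Next I would record the variational derivatives $\delta H/\delta m = u$, $\delta H/\delta\beta = a$ and, differentiating \emph{before} identifying $\varrho = |\mathbf{v}|^2$, $\delta H/\delta \mathbf{v} = (|u|^2 + \operatorname{tr}(aa^\top))\mathbf{v}$. Paired against a test element $(u',a')$, the diamond reads
\begin{equation*}
\int_M (|u|^2+\operatorname{tr}(aa^\top))\,\mathbf{v}\cdot(-L_{u'}\mathbf{v}+a'\mathbf{v}),
\end{equation*}
whose $a'$-piece vanishes by skew-symmetry ($\mathbf{v}\cdot a'\mathbf{v}=0$) and whose $u'$-piece reduces via $2\mathbf{v}\cdot L_{u'}\mathbf{v} = L_{u'}\varrho$ and an integration by parts to $\tfrac{1}{2}d(|u|^2+\operatorname{tr}(aa^\top))\otimes\varrho$. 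Thus the diamond contributes only to the $\mathfrak g^*$-slot, while the $\mathfrak n^*$-slot evolves purely under $\operatorname{ad}^*$.

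Substituting this together with the $\operatorname{ad}^*$ formula of Lemma~\ref{lem:ad_star} into the reduced equation, I close the computation with two key identities: the Riemannian one, $L_u u^\flat = \nabla_u u^\flat + \tfrac{1}{2}d|u|^2$, and the matrix one, $\operatorname{tr}(a\,da) = \tfrac{1}{2}d\operatorname{tr}(a^2) = -\tfrac{1}{2}d\operatorname{tr}(aa^\top)$ for skew $a$. Expanding the time derivative on $u^\flat\otimes\varrho$ via Leibniz and using $\dot\varrho + L_u\varrho = 0$ to absorb the density factor, the $\tfrac{1}{2}d|u|^2$ pieces from $L_u u^\flat$ and the $\tfrac{1}{2}d\operatorname{tr}(aa^\top)$ pieces from $\operatorname{tr}(a\,da)$ cancel in pairs against the diamond contribution, leaving precisely $\dot u + \nabla_u u = 0$. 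The analogous but simpler computation on the $\mathfrak n^*$-slot yields $(a\otimes\varrho)^\cdot + L_u(a\otimes\varrho) = 0$, which again collapses under density advection to $\dot a + \nabla_u a = 0$. Finally, pairing $\dot{\mathbf{v}} + L_u\mathbf{v} = a\mathbf{v}$ against $\mathbf{v}$ and invoking skew-symmetry gives $\tfrac{1}{2}(\dot\varrho + L_u\varrho) = 0$.

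The main obstacle I anticipate is careful sign bookkeeping in the semidirect-product $\operatorname{ad}^*$ and diamond conventions so that the cancellations above come out clean. A secondary pitfall is that the Hamiltonian is written with $|\mathbf{v}|^2$ in the denominator, so one must differentiate in the independent momenta $(m,\beta)$ and in $\mathbf{v}$ \emph{before} imposing the on-shell identification $\varrho = |\mathbf{v}|^2$, lest the $\mathbf{v}$-dependence of the kinetic energy vanish and the crucial diamond term disappear with it.
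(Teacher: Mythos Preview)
Your proposal is correct and matches the paper's proof essentially line for line: the paper's abstract reduced equation (Proposition~\ref{prop:poisson_reduction_abstract}) is $\dot\mu = \operatorname{ad}^*_{(u,a)}\mu - \Phi(\mathbf{v},\delta H/\delta\mathbf{v})$, where the momentum-map term $-\Phi$ (computed via Lemma~\ref{lem:momentum_maps}) is exactly your diamond, and the cancellations with the $\operatorname{ad}^*$ contribution from Lemma~\ref{lem:ad_star} proceed just as you outline. One small slip to fix during the bookkeeping you already anticipate: since $|\mathbf{v}|^2$ sits in the \emph{denominator} of $H$, the correct variational derivative is $\delta H/\delta\mathbf{v} = -(|u|^2+\operatorname{tr}(aa^\top))\mathbf{v}$, with a minus sign.
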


\begin{remark}
The geodesic equations in Theorem \ref{prop:governing_abstract} are natural vector extensions of the classical Burgers equation $\dot{u} + \nabla_{{u}}{u} = 0$ and the passive scalar transport $ \dot a + \nabla_u a = 0$. Indeed, the $L^2$-metric of Definition~\ref{def:Bures}
does not depend on the orientation of the vectors $v$, so the geodesic path in the $\operatorname{Diff}(M)$-part of the group should also be independent of that orientation, and hence it does not affect the evolution of $u$. Note however, that the horizontal distribution $H_{(\varphi,A)}(G\ltimes N)$ in the group $\operatorname{Diff}(M)\ltimes C^\infty(M,\operatorname{SO}(k))$ found in
Proposition~\ref{prop:horizontal} is rather complicated, which does not allow one to describe the geodesics on vector densities, i.e. solutions of those equations,
as explicitly as for scalar ones as the flows of gradient fields.

Note also that the equations in Theorem~\ref{prop:governing_abstract} are on $\tilde{\mathfrak{g}}^*\times \mathrm{VHProb}$, which is a reduced form of $T^*\widetilde G=\tilde{\mathfrak{g}}^*\times (\operatorname{Diff}(M)\ltimes C^\infty(M,\mathbb{R}^k))$ taking into account the reduction of base isometries.
\end{remark}





\begin{proof} 
In the normalization $\varrho = \lvert\mathbf{v}\rvert^2$ one has
$$\dot \varrho = 2 \langle \mathbf{v}, \dot{\mathbf{v}}\rangle = - 2\langle \mathbf{v},L_u \mathbf{v} \rangle = - L_u \lvert\mathbf{v}\rvert^2 = -L_u \varrho,$$
which implies that $\varrho$ is transported by $u$.

    The variational derivatives of the Hamiltonian are
    \begin{equation*}
        \frac{\delta H}{\delta (u^\flat\otimes\varrho)} = u, \quad 
        \frac{\delta H}{\delta (a\otimes\varrho)} =  a, \quad 
        \frac{\delta H}{\delta \mathbf{v}} = -\lvert u \rvert^2 \mathbf{v} -  \operatorname{tr}(a a^\top) \mathbf{v}\,.
    \end{equation*}
    Indeed, the explicit form of these variational derivatives is obtained by rewriting the Hamiltonian as
    \begin{equation*}
        H = \frac{1}{2}\langle u^\flat\otimes\varrho, u \rangle + \frac{1}{2}\langle a\otimes\varrho, a \rangle 
        = \frac{1}{2}\langle u^\flat\otimes\lvert\mathbf{v}\rvert^2, u \rangle + \frac{1}{2}\langle a\otimes \lvert\mathbf{v}\rvert^2, a \rangle .
    \end{equation*}

    Next, the general form of the corresponding Hamiltonian equations is obtained in Proposition~\ref{prop:poisson_reduction_abstract} in Appendix, namely
    \begin{equation}\label{eq:ham_eq_abstract}
        \frac{d}{dt}(u^\flat\otimes\varrho, a\otimes\varrho) = \operatorname{ad}^*_{(\frac{\delta H}{\delta u^\flat\otimes\varrho},\frac{\delta H}{\delta a\otimes\varrho})}(u^\flat\otimes\varrho, a\otimes\varrho) - \Phi(\mathbf{v}, \frac{\delta H}{\delta \mathbf{v}}).
    \end{equation}
    Then from Lemma~\ref{lem:momentum_maps} we have
    \begin{align*}
        &\Phi(\mathbf{v}, \frac{\delta H}{\delta \mathbf{v}}) =
        \Phi\left( v\otimes\sqrt{\varrho}, -(|u|^2  {v}+ \operatorname{tr}(aa^\top)  {v}) \otimes\sqrt{\varrho}\right) = \\
        &\left(-\frac{1}{2}\left(\langle d (|u|^2  v+ \operatorname{tr}(aa^\top)  v) ,{v}\rangle - \langle d v, (|u|^2  v+ \operatorname{tr}(aa^\top)  v) \rangle \right) \otimes\varrho, \right. \\ 
        &\left. -\frac{1}{2} \left((|u|^2  v+ \operatorname{tr}(aa^\top)  v)\otimes v -  v\otimes (|u|^2  v+ \operatorname{tr}(aa^\top)  v) \right) \right) \\
        &= \left(-\frac{1}{2} d \left(|u|^2 + \operatorname{tr}(aa^\top)\right)\otimes\varrho , 0\right),
    \end{align*}
    and from Lemma~\ref{lem:ad_star}
    \begin{align*}
        &\operatorname{ad}^*_{(\frac{\delta H}{\delta u^\flat\otimes \varrho}, \frac{\delta H}{\delta a\otimes\varrho})}(u^\flat\otimes\varrho,a\otimes\varrho) =  \big( -L_u (u^\flat\otimes\varrho) + \operatorname{tr}(a \, d a), -L_u (a\otimes\varrho) \big) = \\
        &  \big( -L_u (u^\flat\otimes\varrho) -\frac{1}{2} d \operatorname{tr}(aa^\top), -L_u (a\otimes\varrho) \big).
    \end{align*}
    Since $\varrho = \lvert\mathbf{v}\rvert^2$ is transported by $u$, it decouples from the equation, so from equation~\eqref{eq:ham_eq_abstract} we get
    \begin{equation*}
        \dot u^\flat = \underbrace{- L_u u^\flat  - \frac{1}{2} d \operatorname{tr}(aa^\top)}_{\text{from }\operatorname{ad}^*}  \underbrace{+\frac{1}{2}d|u|^2 + \frac{1}{2}d\operatorname{tr}(aa^\top)}_{\text{from }-\Phi} = -L_u u^\flat - \frac{1}{2}d\lvert u\rvert^2 .
    \end{equation*}
    Thus, the equation for the one-form $u^\flat$ is
    \begin{equation*}
        \dot u^\flat  + \underbrace{L_u u^\flat - \frac{1}{2}d \lvert u \rvert^2}_{(\nabla_u u)^\flat} = 0, 
    \end{equation*}
    which yields the equation for $u$ as
    \begin{equation*}
        \dot u + \nabla_u u = 0.
    \end{equation*}
    For the evolution of $a\otimes\varrho$ we get
    \begin{equation*}
        \frac{d}{dt}a\otimes\varrho = - L_u (a\otimes\varrho)
    \end{equation*}
    which decouples as
    \begin{equation*}
        \dot a + \nabla_u a = 0 .
    \end{equation*}
    The evolution for $v$ is obtained from the infinitesimal action formula \eqref{eq:inf_action_on_v}. 
\end{proof}


\begin{problems} 
The above consideration was related to a particular Bures-type $L^2$-metric. It would be interesting  to see if other metrics give natural
equations of geodesics or the corresponding transport problem. In particular, do the $\dot H^1$- and the Fisher--Rao metrics manifest themselves in the vector setting?
\end{problems}

\begin{remark} \label{rem:obstruction1}
To spell out the assumptions on homotopy equivalence of vector half-densities, note that 
by considering the normalization $v\in S^{k-1}$ the question of fixing a connected component 
for ${\rm VHProb}$ boils down to the homotopy classes of maps $M^n \to S^{k-1}$. For $n < k-1$, all maps are homotopically trivial, and hence one has a unique connected component of vector half-densities on $M$. When $n=k-1$ the connected components are distinguished by the degree of the map $M^{n}\to S^n$. If $n > k-1$ the description of such connected components is, in general, an open question. For example, let $M=S^3 $ and $k=3$, the connected components of vector half-densities are 
enumerated by $\pi_3(S^2) = \mathbb Z$, the Hopf invariant. And, more generally, if $M$ is a sphere, one deals with the homotopy groups of spheres that, in general, are still not known.

An extension of the map between vector functions (or half-densities) to their ${\rm SO}(k)$-gauge equivalence is a more subtle question. Now one needs to lift vector half-densities from ${\rm VHProb}$, which 
are  $S^{k-1}$-sections over $M$, to gauge transformations between them, which are  ${\rm SO}(k)$-sections over $M$.
The latter can be regarded as sections of the ${\rm SO}(k-1)$-bundle over ${\rm VHProb}$. The corresponding theory implies that  obstructions to such an extension belong to the groups $H^j(M, \pi_{j-1}(F))$ for $F={\rm SO}(k-1)$. In the paper, as we study local questions and geodesic equations, we assume that all those obstructions are trivial, so that the extensions are topologically possible. 

Note also that for matrix densities discussed in detail below, the first extension holds automatically. Indeed there is just one connected component in ${\rm MProb}$, i.e. all maps $M\to {\rm Sym}_+(k)$
are homotopically equivalent, since positive-definite quadratic forms form a contractible set. 
On the other hand, the second part of extending identification of matrix densities 
to a gauge equivalence is based on essentially the same obstruction theory as above, since the stabilizer of a positive quadratic form in $k$ variables is isomorphic to  ${\rm SO}(k)$. 
 \end{remark}

\begin{remark}\label{remark4}
In the past a typical approach to define optimal transport on vector densities has been as follows (see e.g. \cite{Chen2018}): one considers an $k$-tuple (or equivalently, a vector) of densities
$(\varrho_1, ..., \varrho_k)$ on $M$ and an $k$-tuple of diffeomorphisms $(\varphi_1, ..., \varphi_k)$ acting on them component-wise. In addition, one introduces a matrix of allowed density exchanges between the components, while
making sure that the densities after the exchanges do not lose their positivity. The corresponding distance between 
$k$-tuples  of densities defined as the cost of such an exchange is, generally speaking, only a divergence: it is not symmetric and requires an additional symmetrization. 

Apparently, this setting might turn out to be related to the flows of $k$-phase fluids, where the densities
and flows are introduced  separately for each phase but   related by a common constraint, such as the total incompressibility, see \cite{Izosimov2023}. Symmetries of multiphase fluids are described by the corresponding {\it multiphase diffeomorphism groupoid}. The groupoid approach also allows one to introduce metrics and distances on the space of 
 $k$-tuples  of densities, which have the fluid dynamical background. It would be interesting to find a precise relation between multiphase fluid groupoids and more general optimal transport on vector densities.
 
 One should also note that in the  approach of the present paper via the semidirect group, where a common diffeomorphism is 
 acting on vector half-densities, the exchanges between components are achieved by gauge transformations.
 It requires neither additional positivity adjustment (as it is automatically achieved by squaring half-densities to obtain the densities themselves) nor additional symmetrization (as it follows from the invariance of the  metric introduced
 on the group and the Riemannian submersion). 
 
 Futhermore, geodesics in the semidirect group setting are  naturally related to Burgers-type fluid equations, while the approach itself easily extends to other cases, such as, e.g., matrix-valued densities, as we explain in Section \ref{sec:matrix_densities}.
\end{remark}


\section{Unbalanced mass transport} \label{sect:unbal}

\subsection{Various settings of the unbalanced transport}
Unbalanced mass transport considers geodesics between densities of different total masses. Let ${\rm Dens}(M)=\Omega^n(M, {\mathbb R}_+)$ stand for all densities  with arbitrary total masses on an $n$-dimensional manifold $M$. 

It became natural now to associate an unbalanced transport with consideration of the semidirect product group 
$$
 G=\operatorname{Diff}(M)\ltimes C^\infty(M, \mathbb R_+),
$$ 
see \cite{Gallouet2021}, Section 3.2.1.
Then $(\varphi, \lambda)\in \operatorname{Diff}(M)\ltimes C^\infty(M, \mathbb R_+)$ acts on a density $\mu \in {\rm Dens}(M)$ as follows:
$$
\mu\mapsto (\varphi^*\mu)\cdot \lambda^2.
$$
This action is  transitive: one can get an arbitrary density from any initial one by choosing an appropriate factor $\lambda$.
(Here and in \cite{Gallouet2021} one uses the multiplicative group of nonvanishing functions, but there is also an additive version: $(\varphi, f)\in \operatorname{Diff}(M)\ltimes C^\infty(M, \mathbb R),$
where $\lambda:=\exp(f)$ and the action is $\mu\mapsto \varphi^*\mu\cdot \exp(2f)$.) 

One introduces a metric on the group $ G$, which projects (as a Riemannian submersion) to a metric on ${\rm Dens}(M)$, see details in \cite{Gallouet2021}. The obtained metric on 
${\rm Dens} (M)= \operatorname{Prob}(M) \times \mathbb R_+$ is a cone over the metric on the space of normalized densities $\operatorname{Prob}(M)$ on $M$ and quadratic scaling in $\mathbb R_+$, see formula (3.22) in \cite{Gallouet2021} (see also earlier papers \cite{Kondratyev2015, Chizat2015}). 

\begin{remark}
   In \cite{Khesin2024} a simple unbalanced optimal transport was described, where one uses much smaller conic group $G=\operatorname{Diff}(M)\times \mathbb R_+$, which projects with a Riemannian submersion to the conic extension of densities $\operatorname{Prob}(M) \times \mathbb R_+$. However, to see the similarity with vector densities described above one needs the ``large" group $ G=\operatorname{Diff}(M)\ltimes C^\infty(M, \mathbb R_+)$. 
\end{remark}


\subsection{Vector unbalanced transport}\label{sec:vector-unbalan}

By replacing ${\rm SO}(k)$ with the conformal linear group 
${\rm Conf}(k):={\rm SO}(k)\times {\mathbb R}_+$ (rotations and dilations) as the main gauge group,  one can consider an {\it unbalanced vector optimal transport}. 
The standard unbalanced transport becomes a particular case of this more universal framework for $k=1$.

Namely, in the unbalanced setting we 
define the semi-direct product group ${\bf G}=  \operatorname{Diff}(M) \ltimes C^\infty(M, {\rm Conf}(k))$
It acts on the space of all (non-normalized) vector-valued  half-densities
$$
\mathrm{{\bf V}HDens} =\{ \mathbf{v} \in C^\infty(M,\mathbb{R}^k ) \otimes \operatorname{HDens}(M)~|~ 
\mathbf{v}\not=0 ~{\rm  on ~} M\}
$$ 
We still represent those densities as $\mathbf{v}={v}\otimes \sqrt\varrho\in \mathrm{{\bf V}HDens} $ with $v(x)\in S^{k-1}$ in a unique way.
The combined group action is given by the same formula as before.

\begin{definition} 
The combined group action for 
${\bf G}=  \operatorname{Diff}(M)\ltimes C^\infty(M,\operatorname{Conf}(k))$ on  half-densities
in $\mathrm{{\bf V}HDens} $ is as follows:
for $(\varphi, A)\in  \operatorname{Diff}(M)\ltimes C^\infty(M,\operatorname{Conf}(k))$
$$
(\varphi, A):\, \mathbf{v}={v}\otimes \sqrt\varrho \mapsto 
(A{v})\circ \varphi^{-1} 
\cdot\sqrt{ \varphi_*\varrho}\,.
$$
\end{definition} 

In the same way one can see that the action of the group ${\bf G}=  \operatorname{Diff}(M)\ltimes C^\infty(M,\operatorname{Conf}(k))$ half-densities in $\mathrm{{\bf V}HDens} $  is transitive. 

The corresponding combined infinitesimal action of $(u,a) \in T_{(id,e)}\bf G$ is given by the same formula
\begin{equation} 
    (u,a)\cdot \mathbf{v} = -L_u \mathbf{v} + a \mathbf{v} \,,
\end{equation}
where $a\in C^\infty(M, \frak{conf}(k))$, i.e. $a$ has the scalar diagonal part, since the Lie algebra 
$\frak{conf}(k)$ is a direct product: $\frak{conf}(k)=\frak{so}(k)\oplus I \mathbb R$, where $I$ is the $k\times k$ identity matrix.

All the above considerations, including the geodesic equations in Theorem~\ref{prop:governing_abstract},
have the same form, where the matrix term $a$, which before was skew-symmetric and hence zeros on the diagonal, may now have a scalar diagonal part:
    \begin{equation*}
        \dot{u} + \nabla_{{u}}{u} = 0 , \quad \dot a + \nabla_u a = 0, \quad \dot{\mathbf{v}} + L_{{u}} \mathbf{v} =  a \mathbf{v} .
    \end{equation*}
Here, however,  the scalar density $\varrho = \lvert\mathbf{v}\rvert^2$
is not transported by $u$ any longer, but changes as in the scalar unbalanced transport:
$$\dot \varrho +L_u \varrho= -\frac2k\operatorname{tr}( a) \varrho$$
due to the present diagonal part of $a$.



\section{Universal mass transport of matrix densities} \label{sec:matrix_densities}

The setting of matrix-valued densities
requires a different acting group.
Let $M$ be a manifold of dimension $n$, and fix a (normalized) reference density $\mu$.

It turns out that, unlike the vector case, it is more natural to start with the unbalanced transport of matrix densities.

\subsection{Unbalanced optimal transport of matrix-valued densities}\label{sec:matrix-unbalanced}

\begin{definition} \label{def:MDens}
    The space of matrix densities is the space ${\rm MDens}:=\Omega^n(M, {\rm Sym}_+(k))$ of (non-normalized) densities with values in positive-definite quadratic forms in ${\mathbb R}^k$.
\end{definition}

Now we consider the  semidirect product group 
$$
\widehat G={\rm Diff}(M)\ltimes C^\infty(M, {\rm GL}(k, \mathbb R)),
$$ 
 where $k$ and $n$ could be different. This group acts on the spaces of  matrix densities
${\rm MDens}=\Omega^n(M, {\rm Sym}_+(k))$ as follows. 
For  
$$(\varphi, A)\in {\rm Diff}(M)\ltimes C^\infty(M, {\rm GL}(k, \mathbb R))$$ the action on a matrix density 
$$\mathbf S \in \Omega^n(M, {\rm Sym}_+(k))\subset  \Omega^n(M) \otimes {\rm Sym}(k)$$ is
$$
(\varphi, A)\cdot \mathbf S =  A (\varphi_*\mathbf S) A^\top \,,
$$
where $\varphi_*$ acts on each matrix element thought of as a top-form.
The density associated with $\mathbf S$ is thus $\operatorname{tr}(\mathbf S)$, so the total mass is
$$
{\rm Mass}(\mathbf S):=\int_M \operatorname{tr}\mathbf S.
$$

\begin{remark}\label{rem:closure}
Note that Definition~\ref{def:MDens} is consistent with the definition of vector half-densities: if $\mathbf S:=\mathbf v\otimes \mathbf v$, a matrix density of rank 1 (and hence, being {\it only positive semi-definite}, it belongs to the {\it closure of} ${\rm Sym}_+(k)$), then the vector half-density $\mathbf v$ gives rise to a matrix density $\mathbf S$. Furthermore, the action of the group $\widetilde G$ on vector half-density $\mathbf v$
is consistent with the action of the group $\widehat G$ on the corresponding matrix density $\mathbf S=\mathbf v\otimes \mathbf v$. 
See Figure~\ref{fig:universal_diagram} for an illustration.
\end{remark}

\begin{lemma}
The  action of the group $\widehat G= {\rm Diff}(M) \ltimes C^\infty(M, {\rm GL}(k, \mathbb R))$  on
matrix densities ${\rm MDens}$ of arbitrary total masses is transitive. 
\end{lemma}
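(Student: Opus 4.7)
The plan is to reduce transitivity of the $\widehat G$-action on $\mathrm{MDens}$ to the pointwise linear-algebra fact that $\mathrm{GL}(k,\mathbb{R})$ acts transitively on the cone $\mathrm{Sym}_+(k)$ via the congruence action $A\cdot s = A s A^\top$ (with stabilizer $\mathrm{O}(k)$ at the identity matrix). Since the gauge factor $A(x)$ can be chosen independently at every point of $M$, the diffeomorphism component $\varphi$ is actually not needed for transitivity in this unbalanced setting: the pointwise rescaling freedom in $A$ already absorbs any mismatch in total masses between $\mathbf{S}_0$ and $\mathbf{S}_1$. So I would take $\varphi = \mathrm{id}$ and construct the connecting gauge $A$ directly.

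Concretely, given $\mathbf{S}_0,\mathbf{S}_1\in\mathrm{MDens}$, I would fix an auxiliary positive smooth density $\mu$ on $M$ and write $\mathbf{S}_i = s_i\,\mu$ with $s_i\in C^\infty(M,\mathrm{Sym}_+(k))$. Then I would define
\begin{equation*}
    A(x) := s_1(x)^{1/2}\,s_0(x)^{-1/2}
\end{equation*}
using the symmetric positive square root on $\mathrm{Sym}_+(k)$. Since $s_0(x)$ is pointwise invertible and its square root is symmetric, a one-line computation gives $A\,s_0\,A^\top = s_1$ pointwise, so $(\mathrm{id},A)\cdot \mathbf{S}_0 = (A s_0 A^\top)\,\mu = \mathbf{S}_1$, which is the required transitivity.

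The only technical point requiring attention is the smoothness of this construction in $x$. This follows from the classical fact that the squaring map $L\mapsto L^2$ is a diffeomorphism of the open cone $\mathrm{Sym}_+(k)$ onto itself (its derivative $H\mapsto LH+HL$ is invertible for $L\succ 0$), so $s\mapsto s^{1/2}$ is smooth; alternatively one may invoke smooth Cholesky factorization $s_i = L_i L_i^\top$ and take $A := L_1 L_0^{-1}$. I do not anticipate any deeper obstacle, because the argument is considerably simpler than the vector analogue (Lemma \ref{lem:trans}): the target space $\mathrm{Sym}_+(k)$ is a contractible homogeneous space of $\mathrm{GL}(k,\mathbb{R})$, so none of the topological obstructions mentioned in Remark \ref{rem:obstruction1} appear, and local transitivity globalizes automatically.
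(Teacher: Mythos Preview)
Your proof is correct and takes a genuinely different route from the paper. The paper first invokes Moser's theorem on the diffeomorphism factor to match the scalar density parts, and then appeals to pointwise transitivity of the $\mathrm{GL}(k)$ congruence action on $\mathrm{Sym}_+(k)$; it explicitly shelters behind the standing assumption that the topological obstructions of Remark~\ref{rem:obstruction1} vanish in order to glue the pointwise choices into a smooth gauge. You instead set $\varphi=\mathrm{id}$ and write down a global smooth gauge $A=s_1^{1/2}s_0^{-1/2}$ directly, using smoothness of the positive square root on $\mathrm{Sym}_+(k)$.

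Your approach buys two things. First, it is more elementary: Moser's theorem is simply not needed, since in this unbalanced setting the $\mathrm{GL}(k)$ factor already absorbs all scalar rescaling. Second, and more substantively, your explicit formula shows that no topological hypothesis is required here at all: the fibration $\mathrm{GL}(k)\to\mathrm{Sym}_+(k)$, $A\mapsto AA^\top$, admits the smooth global section $s\mapsto s^{1/2}$, so the pointwise lift globalizes automatically. (Strictly speaking, it is this section---rather than the contractibility of $\mathrm{Sym}_+(k)$ that you cite---which dispenses with the ``second'' obstruction of Remark~\ref{rem:obstruction1}; contractibility of the base alone would only handle homotopy of sections, not the lift to the group. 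But since you exhibit the section explicitly, the argument is complete.) The paper's route, by contrast, parallels the vector case more closely and keeps the diffeomorphism factor in play, at the cost of invoking an assumption that your construction shows to be superfluous.
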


\begin{proof} The proof is similar to that of Lemma \ref{lem:trans} above. 
Transitivity on  densities of fixed total mass is provided
by Moser's theorem. On the other hand,  given two matrix densities with the same scalar density part 
the statement follows from transitivity of the pointwise ${\rm GL}(k)$-action on positive-definite quadratic forms: any such form is reduced to the sum of squares in an appropriate basis.
(Here again we use the assumption on the absence of purely topological obstructions, see Remark \ref{rem:obstruction1}.) 
\end{proof}

\medskip

\begin{remark}
It is worth noticing that such a group action naturally leads to an {\it unbalanced} version of the matrix
optimal transport. 
Preservation of the total mass ${\rm Mass}(\mathbf S)=\int_M \operatorname{tr}(\mathbf S)$ 
would require the action of currents preserving the trace of matrices. 
This could be achieved by the action of ${\rm SO}(k)$-currents (i.e. the group $\widetilde G$) on matrix densities, but their action would not be transitive. 
Indeed,  the action of such a group $\widetilde G$ (instead of $\widehat G$, but on matrix densities rather than on vector ones) would preserve not only the trace, but the whole pointwise spectrum of $\mathbf S$, 
and hence would reduce  a positive-definite 
quadratic form to the sum of squares with positive coefficients, where the coefficients, 
being the quadratic form's eigenvalues, remain invariants of the density under this action.
The forms with different eigenvalues (depending on a point in $M$) prevent transitivity of the group $\widetilde G$-action. Also note that the stabilizer subgroup for a matrix-valued density $\mu\otimes \Lambda\in {\rm MDens}$ again depends on the choice of this reference density, since for a quadratic form $\Lambda\in  {\rm Sym}_+(k)$ it is the group ${\rm SO}_\Lambda(k)$, the special orthogonal group for the inner product defined by $\Lambda$.
\end{remark}

\medskip

Lift the action of the group $\widehat G$ from the space of densities $\mathrm{MDens}$ to the Hamiltonian action on its cotangent bundle $T^*\mathrm{MDens}$. We identify $T^*_{\mathbf S}\mathrm{MDens}$ with $C^\infty(M, \mathfrak{gl}(k)/\mathfrak{so}(k))$ via the point-wise Frobenius inner product. For an element $P\in C^\infty(M, \mathfrak{gl}(k))$ consider its coset  $[P]\in C^\infty(M, \mathfrak{gl}(k)/\mathfrak{so}(k))$ defined  by pointwise addition of an arbitrary skew-symmetric matrix.

\begin{lemma}\label{lem:momentum_map_matrix_case}
    Let $(\mathbf S,[P])\in T^*\mathrm{MDens}$.
    Then the momentum map $$\Phi\colon T^* \mathrm{MDens}\to \big(\mathfrak{X}(M)\ltimes C^\infty(M,\mathfrak{gl}(k))\big)^*$$ for the action of $\widehat G$ on $T^*\mathrm{MDens}$ is given by
    \begin{equation}
        \Phi(\mathbf S, [P])
        = \Big( \operatorname{tr}(\mathbf S dP), (P+P^\top)\mathbf S \Big).
    \end{equation}
\end{lemma}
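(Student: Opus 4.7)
Since the $\widehat G$-action on $\mathrm{MDens}$ lifts canonically to a Hamiltonian action on $T^*\mathrm{MDens}$, the momentum map is determined by the standard cotangent-lift formula
\[
    \langle \Phi(\mathbf{S},[P]),(u,a)\rangle = \langle [P],(u,a)\cdot\mathbf{S}\rangle, \qquad (u,a)\in \mathfrak{X}(M)\ltimes C^\infty(M,\mathfrak{gl}(k)).
\]
From the $\widehat G$-action $(\varphi,A)\cdot\mathbf{S}=A(\varphi_*\mathbf{S})A^\top$ one reads off the infinitesimal action $(u,a)\cdot\mathbf{S} = -L_u\mathbf{S}+a\mathbf{S}+\mathbf{S}a^\top$, and the pairing on the right-hand side is $\int_M\operatorname{tr}(\,\cdot\,P)$, which depends only on the symmetric part of $P$ because $\mathbf{S}$ is symmetric-matrix-valued --- this is exactly why the cotangent fibre is $C^\infty(M,\mathfrak{gl}(k)/\mathfrak{so}(k))$. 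The plan is to compute the diffeomorphism and gauge contributions separately and to identify each with a natural pairing.

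For the diffeomorphism piece I would integrate by parts. Since $\operatorname{tr}(\mathbf{S}P)$ is a scalar top-form, $\int_M L_u\operatorname{tr}(\mathbf{S}P)=0$ by Stokes' theorem, hence
\[
    \langle -L_u\mathbf{S},P\rangle = -\int_M\operatorname{tr}\bigl((L_u\mathbf{S})P\bigr) = \int_M\operatorname{tr}(\mathbf{S}\,L_u P) = \int_M\operatorname{tr}(\mathbf{S}\,\iota_u dP),
\]
using that $L_uP = \iota_u dP$ entrywise on the matrix function $P$. The integrand equals $\iota_u\operatorname{tr}(\mathbf{S}\,dP)$, where $\operatorname{tr}(\mathbf{S}\,dP)=\sum_{ij}\mathbf{S}_{ij}\otimes dP_{ji}$ is naturally interpreted as a $1$-form-valued density, so the contribution is the canonical pairing $\langle u,\operatorname{tr}(\mathbf{S}\,dP)\rangle$. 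This identifies the first component of $\Phi$ as $\operatorname{tr}(\mathbf{S}\,dP)$.

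For the gauge piece the manipulation is purely algebraic. Using cyclicity of the trace, the identity $\operatorname{tr}(X)=\operatorname{tr}(X^\top)$, and the symmetry $\mathbf{S}^\top=\mathbf{S}$, I would write
\[
    \operatorname{tr}(a\mathbf{S}P) = \operatorname{tr}(a^\top P^\top\mathbf{S}), \qquad \operatorname{tr}(\mathbf{S}a^\top P) = \operatorname{tr}(a^\top P\mathbf{S}),
\]
so that
\[
    \langle a\mathbf{S}+\mathbf{S}a^\top,P\rangle = \int_M\operatorname{tr}\!\bigl(a^\top(P+P^\top)\mathbf{S}\bigr) = \langle a,(P+P^\top)\mathbf{S}\rangle_F,
\]
which recognizes the second component of $\Phi$ as $(P+P^\top)\mathbf{S}$ under the Frobenius pairing $\langle a,Y\rangle_F=\int_M\operatorname{tr}(a^\top Y)$ of $\mathfrak{gl}(k)$ with itself.

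Finally, I would verify descent to the coset $[P]\in C^\infty(M,\mathfrak{gl}(k)/\mathfrak{so}(k))$: under $P\mapsto P+Q$ with $Q\in\mathfrak{so}(k)$, the combination $P+P^\top$ is invariant, and $\operatorname{tr}(\mathbf{S}\,dQ)=\sum_{ij}\mathbf{S}_{ij}\otimes dQ_{ji}$ vanishes because $(\mathbf{S}_{ij})$ is symmetric in $(i,j)$ while $(dQ_{ji})$ is skew. The only real obstacle is a bookkeeping one: the matrix ordering $(P+P^\top)\mathbf{S}$ --- rather than the cyclically related $\mathbf{S}(P+P^\top)$ --- depends on using the Frobenius convention $\operatorname{tr}(a^\top Y)$ rather than $\operatorname{tr}(aY)$ to identify $\mathfrak{gl}^*$ with $\mathfrak{gl}$, and one must remain consistent with this choice throughout the computation to recover exactly the formula of the lemma.
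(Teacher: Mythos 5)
Your proposal is correct and follows essentially the same route as the paper: take the cotangent-lift formula, plug in the infinitesimal action $(u,a)\cdot\mathbf S = -L_u\mathbf S + a\mathbf S + \mathbf S a^\top$, integrate the diffeomorphism piece by parts, and reshuffle the gauge piece with cyclicity of the trace and $\mathbf S^\top = \mathbf S$. The only cosmetic difference is in the integration by parts: you invoke the Leibniz rule on $\operatorname{tr}(\mathbf S P)$ together with $\int_M L_u(\text{top form})=0$, whereas the paper writes $L_u\mathbf S = d\iota_u\mathbf S$ and moves the $d$ across via Stokes; these are the same computation in slightly different packaging. Your explicit verification of descent to the coset $[P]$ is a useful sanity check the paper leaves implicit.
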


\begin{proof}
    The infinitesimal action of $(u,a)\in \mathfrak{X}(M)\ltimes C^\infty(M,\mathfrak{gl}(k))$ on $\mathbf S\in \mathrm{MDens}$ is
    \[
        (u,a)\cdot \mathbf S = - L_u \mathbf S + a\mathbf S + \mathbf Sa^\top .
    \]
    By definition, the momentum map fulfills
    \[
        \langle \Phi(\mathbf S, [P]), (u,a)\rangle = \langle [P], (u,a)\cdot \mathbf S \rangle
        = \int_M \operatorname{tr}\Big(P\big(-L_u \mathbf S + a \mathbf S + \mathbf Sa^\top\big) \Big),
    \]
    which gives 
    \begin{align*}
        &= \int_M \operatorname{tr}\Big(P\big(
            -d\iota_u \mathbf S + (a \mathbf S + \mathbf Sa^\top)
        \big)\Big)
        \\ &= \int_M \iota_u \operatorname{tr}(\mathbf S dP)
        + \int_M \operatorname{tr}\big( (P^\top+P) \mathbf S a^\top\big)
        \\ &= \Big\langle \big(\operatorname{tr}(\mathbf S d P), (P^\top+P)\mathbf S \big), (u,a)\Big\rangle .
    \end{align*}
The latter expression of the momentum map implies the result. 
\end{proof}

As above, let now $M$ be a Riemannian manifold with metric $(\cdot\,, \cdot)$.
We fix an initial matrix density $\mathbf S_0 \in \mathrm{MDens}$.

\begin{definition}\label{def:MBures}
The {\it Bures-type  $L^2$-metric} on the group $\widehat G=  \operatorname{Diff}(M)\ltimes C^\infty(M,\operatorname{GL}(k))$
is given by 
\begin{equation}\label{eq:metric_Bures_MDens}
    \langle (u,a)\cdot (\varphi,A),(u,a)\cdot (\varphi,A)\rangle_{(\varphi,A)}
    = \int_M \left( \lvert u\rvert^2 + \operatorname{tr}(aa^\top)  \right) \operatorname{tr}(A (\varphi_*\mathbf S_0 ) A^\top) \,.
\end{equation}
\end{definition}

The consideration mimicking Proposition \ref{prop:nondegenerate}
shows that this metric is nondegenerate. 

\begin{definition}
    The \emph{matrix Wasserstein distance} $\mathit{WM}(\mathbf S_0,\mathbf S_1)$ between two matrix densities $\mathbf S_0,\mathbf S_1 \in \mathrm{MDens}$ is given by
    \[
        \mathit{WM}^2(\mathbf S_0, \mathbf S_1) = \inf_{u,a,\mathbf S}\int_0^1 \int_M \left( \lvert u\rvert^2 +  \operatorname{tr}(aa^\top) \right) \operatorname{tr}(\mathbf S) \, dt\,,
    \]
    over time-dependent vector fields $u(t)\in \mathfrak{X}(M)$, matrix densities $\mathbf S(t) \in \mathrm{MDens}$, and matrix valued functions $a(t) \in C^\infty(M, \mathfrak{gl}(k))$ related by the constraints
    \[
        \dot{\mathbf S} = -L_u\mathbf S + a\mathbf S + \mathbf S a^\top ,\quad \mathbf S(0) = \mathbf S_0, \quad \mathbf S(1) = \mathbf S_1.
    \]
\end{definition}



\medskip

As before, these two metrics are defined as natural extensions of the classical Wasserstein metric: For any path between two matrix densities the first term penalizes if much mass is to be moved over $M$, while the second evaluates how much two matrix valued masses differ by measuring how far one needs to rotate the coordinates to identify them. 

For convenience, we now introduce the factorization $\mathbf S = S\otimes\varrho$ where $\operatorname{tr}S = 1$ (this corresponds to the factorization $\mathbf v = v\otimes \sqrt{\varrho}$ of the vector half-density above).
In this factorization, we have that 
\begin{equation}\label{eq:rho_evolution_MDens}
    \dot\varrho = \frac{d}{dt}\operatorname{tr}(\mathbf S) = - L_u\varrho + 2\operatorname{tr}(a S)\varrho
\end{equation}
and
\[
    \dot S = -L_u S + a S + S a^\top - 2 S \operatorname{tr}(a S).
\]

\begin{theorem}
    Consider the projection $\pi\colon \widehat G \to \mathrm{MDens}$ given by the action $\pi(\varphi,A) = A(\varphi_* \mathbf{S}_0)A^\top$.
The projection $\pi$ is a Riemannian submersion of two Riemannian manifolds: the group $\widehat  G=  \operatorname{Diff}(M)\ltimes C^\infty(M,\operatorname{GL}(k))$ equipped with the metric~\eqref{eq:metric_Bures_MDens}   descends to the space of matrix densities $\mathrm{MDens}$ equipped with the Riemannian metric corresponding to the distance function $WM$.

Namely, for $\mathbf{S} = S\otimes \varrho$ and $\dot{\mathbf S}\in T_{\mathbf S} \mathrm{MDens}$, 
the Riemannian  metric on $\mathrm{MDens}$ for the  distance function $WM$ is given by
    \begin{equation}\label{eq:metric_MDens}
        \langle \dot{\mathbf{S}}, \dot{\mathbf{S}}\rangle_{\mathbf S} = \int_M \Big(  | \underbrace{\operatorname{tr}(S\nabla P)}_{u}|^2 +  \operatorname{tr}\big( \underbrace{(P+P^\top)S}_{a} \underbrace{S(P+P^\top)}_{a^\top} \big) \Big)\varrho, 
    \end{equation}
    where $[P]\in T_{\mathbf S}^*\mathrm{MDens}$ is (implicitly) defined by
    \[
        \dot{\mathbf S} = -L_u\mathbf S + a\mathbf S + \mathbf S a^\top
    \]
    with $u$ and $a$ as given in equation~\eqref{eq:metric_MDens}.   
\end{theorem}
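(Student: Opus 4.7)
The strategy closely mirrors that of Theorem~\ref{thm:vectorRiemSub}. First I identify the vertical distribution of $\pi$, then use the momentum map of Lemma~\ref{lem:momentum_map_matrix_case} to single out the horizontal complement, and finally evaluate the Bures norm on horizontal lifts to recover the formula~\eqref{eq:metric_MDens}.

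At a point $(\varphi,A)\in\widehat G$ with $\mathbf S = A(\varphi_*\mathbf S_0)A^\top$, a tangent vector $(u,a)\cdot(\varphi,A)$ is vertical exactly when the infinitesimal action $-L_u\mathbf S + a\mathbf S + \mathbf S a^\top$ vanishes (the analogue of Lemma~\ref{lem:div_free_vertical}). Tracing this identity and using symmetry of $\mathbf S$ together with cyclicity of the trace gives $L_u\operatorname{tr}(\mathbf S) = 2\operatorname{tr}(a\mathbf S)$, so the weight $\operatorname{tr}(\mathbf S)$ in~\eqref{eq:metric_Bures_MDens} depends only on the fiber. Consequently the Bures norm at $(\varphi,A)$ is determined by $(\mathbf S, u, a)$ alone, and the metric descends to the quotient $\mathrm{MDens}$ once the horizontal lifts are shown to project isometrically.

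The horizontal distribution is the Bures-orthogonal complement of the vertical. Dualizing, it is characterized as the image of $T^*_{\mathbf S}\mathrm{MDens}$ under the composition of Lemma~\ref{lem:momentum_map_matrix_case}'s momentum map with the inverse Bures metric: any horizontal $(u,a)$ arises from some $[P]\in C^\infty(M,\mathfrak{gl}(k)/\mathfrak{so}(k))$ via the identification
\[
    (u^\flat\otimes\operatorname{tr}(\mathbf S),\, a\otimes\operatorname{tr}(\mathbf S)) \,=\, \Phi(\mathbf S,[P]) \,=\, \bigl(\operatorname{tr}(\mathbf S\, dP),\,(P+P^\top)\mathbf S\bigr).
\]
Writing $\mathbf S = S\otimes\varrho$ with $\operatorname{tr} S = 1$, dividing by the common weight $\varrho = \operatorname{tr}(\mathbf S)$, and raising an index via the Riemannian metric on $M$ yields $u = \operatorname{tr}(S\,\nabla P)$ and $a = (P+P^\top)S$, which depend only on the coset $[P]$ thanks to the symmetric combination $P+P^\top$.

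It remains to verify that $\pi_*\colon H_{(\varphi,A)} \to T_{\mathbf S}\mathrm{MDens}$ is a bijection and that the induced metric reproduces~\eqref{eq:metric_MDens}. Surjectivity follows from the infinitesimal transitivity of the $\widehat G$-action (compare Lemma~\ref{lem:trans}): every $\dot{\mathbf S}$ admits some lift $(u,a)$, whose horizontal component has the same projection. Injectivity follows from nondegeneracy of the Bures-type metric (analogue of Proposition~\ref{prop:nondegenerate}), i.e.\ $H\cap V=\{0\}$. Substituting the horizontal $(u,a)$ above into~\eqref{eq:metric_Bures_MDens}, the weight $\varrho$ pulls through and produces exactly~\eqref{eq:metric_MDens}, with the implicit constraint $\dot{\mathbf S} = -L_u\mathbf S + a\mathbf S + \mathbf S a^\top$ fixing $[P]$ uniquely. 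The main technical obstacle is the coset quotient on the cotangent side: one must verify that the assignment $[P]\mapsto \bigl(\operatorname{tr}(S\,\nabla P),(P+P^\top)S\bigr)$ is well-defined on $C^\infty(M,\mathfrak{gl}(k)/\mathfrak{so}(k))$ and injectively parameterizes the horizontal distribution. The role of $(P+P^\top)S$ here exactly mirrors that of the skew bracket $\theta\otimes v - v\otimes\theta$ in the vector half-density case of Proposition~\ref{prop:horizontal}, where $a$ was forced to lie in $\mathfrak{so}(k)$ from the outset.
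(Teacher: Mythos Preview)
Your proposal is correct and follows essentially the same route as the paper: observe that the Bures-type metric~\eqref{eq:metric_Bures_MDens} at $(\varphi,A)$ depends on the group element only through the base point $\mathbf S=\pi(\varphi,A)$, then use the momentum map of Lemma~\ref{lem:momentum_map_matrix_case} together with the inertia operator $(u,a)\mapsto(u^\flat\otimes\varrho,\,a\otimes\varrho)$ to parameterize the horizontal distribution by $[P]$, and substitute to obtain~\eqref{eq:metric_MDens}. The paper's own proof is considerably terser, compressing all of this into three sentences.

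One minor remark: your detour through the vertical trace identity $L_u\operatorname{tr}(\mathbf S)=2\operatorname{tr}(a\mathbf S)$ does not by itself yield that the weight is constant along fibers, since here $a\in\mathfrak{gl}(k)$ is not skew and the right-hand side need not vanish (unlike in Lemma~\ref{lem:div_free_vertical}). The conclusion you want is immediate for a simpler reason: the weight $\operatorname{tr}\bigl(A(\varphi_*\mathbf S_0)A^\top\bigr)=\operatorname{tr}(\mathbf S)$ is already manifestly a function of the base point, which is exactly how the paper argues it.
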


\begin{proof}
    The statement that the metric~\eqref{eq:metric_Bures_MDens} decends to the base $\mathrm{MDens}$ 
    is implied by the fact that its dependence  on the group element $(\varphi,A)$ only comes via the projection to the base point $\mathbf S$.
    The Legendre transformation for the metric~\eqref{eq:metric_Bures_MDens} is given by the  inertia operator 
    \begin{equation}\label{eq:legendre_MDens}
    \widehat{\mathfrak{g}} \ni (u,a) \mapsto (u^\flat\otimes\varrho,  a \otimes \varrho)\in \widehat{\mathfrak{g}}^*.
    \end{equation}
    The formula \eqref{eq:metric_MDens} for the metric  then follows from the momentum map in Lemma~\ref{lem:momentum_map_matrix_case} and the definition of the metric~\eqref{eq:metric_Bures_MDens}.
\end{proof}

\medskip

We are now in a position to study geodesics.
To this end, consider the quadratic Hamiltonian on $\widehat{\mathfrak{g}}^*\times \mathrm{MDens}$ obtained via the Legendre transform~\eqref{eq:legendre_MDens}:

\begin{equation}\label{eq:Hamiltonian_MDens}
    \widehat{H}(u^\flat\otimes\varrho, a\otimes\varrho, S\otimes\varrho) = \frac{1}{2}\int_M \Big( |u|^2 + \operatorname{tr}(aa^\top) \Big)\varrho    
\end{equation}

\begin{theorem}\label{thm:governing_equations_MDens}   
    The Hamiltonian equations of motion, corresponding to the reduced geodesic equations on $\widehat{G}$ for an unbalanced optimal transport of matrix densities, are
    \begin{align*}
        & \dot{u} + \nabla_{u}{u} = 2\operatorname{tr}(a S)u , \quad \quad \dot a + \nabla_u a = (|u|^2 + \operatorname{tr}(aa^\top))S+ 2\operatorname{tr}(a S)a , \\ & \dot\rho + \operatorname{div}(\rho u) = 2\operatorname{tr}(aS)\rho, \quad 
        \dot{S} + \nabla_u S = a S + Sa^\top - 2 \operatorname{tr}(aS) S.
    \end{align*}
\end{theorem}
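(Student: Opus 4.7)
The plan is to follow the same Lie--Poisson reduction template as in Theorem~\ref{prop:governing_abstract}, but on the enlarged space $\widehat{\mathfrak{g}}^* \times \mathrm{MDens}$ with the Hamiltonian \eqref{eq:Hamiltonian_MDens}. The abstract reduced Hamilton equation reads
\begin{equation*}
\frac{d}{dt}(u^\flat\otimes\varrho,\, a\otimes\varrho) = \operatorname{ad}^*_{(\delta\widehat{H}/\delta(u^\flat\otimes\varrho),\, \delta\widehat{H}/\delta(a\otimes\varrho))}(u^\flat\otimes\varrho, a\otimes\varrho) - \Phi\!\left(\mathbf{S},\,\tfrac{\delta\widehat{H}}{\delta\mathbf{S}}\right),
\end{equation*}
(cf.~Proposition~\ref{prop:poisson_reduction_abstract}), supplemented by the kinematic evolution $\dot{\mathbf{S}} = -L_u\mathbf{S} + a\mathbf{S} + \mathbf{S}a^\top$ on the configuration space $\mathrm{MDens}$.

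First I would compute the variational derivatives of $\widehat{H}$. Rewriting $\widehat{H} = \tfrac{1}{2}\langle u^\flat\otimes\varrho, u\rangle + \tfrac{1}{2}\langle a\otimes\varrho, a\rangle$ with $\varrho = \operatorname{tr}(\mathbf{S})$ yields $\delta\widehat{H}/\delta(u^\flat\otimes\varrho) = u$ and $\delta\widehat{H}/\delta(a\otimes\varrho) = a$. The trace dependence on $\mathbf{S}$ gives $\delta\widehat{H}/\delta\mathbf{S} = \tfrac{1}{2}(|u|^2 + \operatorname{tr}(aa^\top))\,\mathrm{Id}$, viewed as a class in $C^\infty(M,\mathfrak{gl}(k)/\mathfrak{so}(k))$. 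Substituting $P = \tfrac{1}{2}(|u|^2 + \operatorname{tr}(aa^\top))\,\mathrm{Id}$ into the momentum-map formula of Lemma~\ref{lem:momentum_map_matrix_case} gives
\begin{equation*}
\Phi\!\left(\mathbf{S},\,\tfrac{\delta\widehat{H}}{\delta\mathbf{S}}\right) = \left(\tfrac{1}{2} d\!\left(|u|^2 + \operatorname{tr}(aa^\top)\right) \otimes \varrho,\;\left(|u|^2 + \operatorname{tr}(aa^\top)\right)\mathbf{S}\right),
\end{equation*}
since $dP$ is a scalar one-form times the identity matrix and $(P+P^\top)\mathbf{S} = (|u|^2 + \operatorname{tr}(aa^\top))\mathbf{S}$.

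Next I would derive the $\operatorname{ad}^*$ formula for the Lie algebra $\mathfrak{X}(M) \ltimes C^\infty(M,\mathfrak{gl}(k))$, extending Lemma~\ref{lem:ad_star}. The bracket $[(u_1,a_1),(u_2,a_2)] = (L_{u_1}u_2,\, L_{u_1}a_2 - L_{u_2}a_1 + [a_1,a_2])$ contains an additional pointwise commutator term that contributes a piece of the form $[a,\beta]\otimes\varrho$ in the gauge slot of $\operatorname{ad}^*$. Plugging everything into the reduced Hamilton equation, the $\tfrac{1}{2}d\operatorname{tr}(aa^\top)$ pieces coming from $\operatorname{ad}^*$ and from $-\Phi$ partially cancel, as in the proof of Theorem~\ref{prop:governing_abstract}, leaving a Burgers-type structure for $u^\flat\otimes\varrho$ and a transport-type equation for $a\otimes\varrho$, both with source terms proportional to $(|u|^2+\operatorname{tr}(aa^\top))\mathbf{S}$.

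Finally I would separate the scalar density $\varrho$ from the gauge data. The key departure from Theorem~\ref{prop:governing_abstract} is that $\varrho$ is \emph{not} transported by $u$: from $\mathbf{S} = S\otimes\varrho$, $\dot{\mathbf{S}}=-L_u\mathbf{S}+a\mathbf{S}+\mathbf{S}a^\top$, and $\operatorname{tr}S = 1$, equation \eqref{eq:rho_evolution_MDens} yields $\dot\varrho + \operatorname{div}(\varrho u) = 2\operatorname{tr}(aS)\varrho$, which already supplies the third equation of the theorem. The $S$ equation follows by differentiating the factorization and using the $\varrho$-equation: $\dot S + \nabla_u S = aS + Sa^\top - 2\operatorname{tr}(aS)S$. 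Applying the Leibniz rule to $\tfrac{d}{dt}(u^\flat\otimes\varrho)$ and $\tfrac{d}{dt}(a\otimes\varrho)$ and inserting the non-conservation term $2\operatorname{tr}(aS)\varrho$ then produces exactly the source terms $2\operatorname{tr}(aS)u$ and $(|u|^2+\operatorname{tr}(aa^\top))S + 2\operatorname{tr}(aS)a$ in the evolution of $u$ and $a$. The main obstacle I anticipate is the careful bookkeeping of these source terms: one must verify that the $2\operatorname{tr}(aS)$ contributions coming from the non-conservation of $\varrho$, from the matrix piece of the momentum map, and from the extended $\operatorname{ad}^*$ assemble into precisely the coefficients stated, rather than partially canceling or doubling.
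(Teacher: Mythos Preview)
Your approach is essentially identical to the paper's proof: compute the variational derivatives of $\widehat H$, evaluate the momentum map via Lemma~\ref{lem:momentum_map_matrix_case}, evaluate $\operatorname{ad}^*$ by a calculation parallel to Lemma~\ref{lem:ad_star}, insert everything into the reduced Hamilton equation of Proposition~\ref{prop:poisson_reduction_abstract}, and then separate $\varrho$ from $S$ using \eqref{eq:rho_evolution_MDens} and the Leibniz rule.

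Two minor corrections. First, the variation $\delta\widehat{H}/\delta\mathbf{S}$ carries a \emph{minus} sign: with the momenta $m=u^\flat\otimes\varrho$ and $a\otimes\varrho$ held fixed one has $\widehat{H}\sim \lvert m\rvert^2/\operatorname{tr}(\mathbf{S})$, so $\delta\widehat{H}/\delta\mathbf{S} = -\tfrac{1}{2}(|u|^2+\operatorname{tr}(aa^\top))[I]$; with your sign the cancellation between the $\tfrac{1}{2}d\operatorname{tr}(aa^\top)$ pieces from $\operatorname{ad}^*$ and from $-\Phi$ would go the wrong way. Second, the pointwise commutator $[a_1,a_2]$ you add to the $\mathfrak{gl}(k)$ bracket would contribute an extra $\operatorname{ad}^*$ term in the gauge slot, but since in the application one pairs with $\beta=a$ this term is $[a,a]=0$ and plays no role; the paper's computation simply omits it.
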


Note that if $a$ is skew-symmetric then all the terms with $\operatorname{tr}(aS)$ vanish (as well as the term $(|u|^2 +  \operatorname{tr}(aa^\top))S$), and thus the equations simplify noticeably.
We will also observe similar simplifications for the balanced version of matrix transport in the next section. 
Again, the equations are on $\hat{\mathfrak{g}}^*\times \mathrm{MDens}$, which is a reduced form of $T^*\hat G=\hat{\mathfrak{g}}^*\times (\operatorname{Diff}(M)\ltimes C^\infty(M,\mathrm{Sym}_+(k)))$ taking into account the reduction of base isometries.

\begin{proof}
    The variational derivatives of the Hamiltonian are
    \begin{equation*}
        \frac{\delta \widehat H}{\delta (u^\flat\otimes\varrho)} = u, \quad 
        \frac{\delta \widehat H}{\delta (a\otimes\varrho)} =  a, \quad 
        \frac{\delta \widehat H}{\delta \mathbf{S}} = -\frac{1}{2}\big( \lvert u \rvert^2 +  \operatorname{tr}(aa^\top)\big) [I] \,,
    \end{equation*}
    where $[I] \in T^*_{\mathbf{S}}\mathrm{MDens}$ is the co-set of the identity matrix $I$. 

    Next, the general form of the corresponding Hamiltonian equations is obtained in Proposition~\ref{prop:poisson_reduction_abstract} in Appendix, namely
    \begin{equation}\label{eq:ham_eq_abstract_MDens}
        \frac{d}{dt}(u^\flat\otimes\varrho, a\otimes\varrho) = \operatorname{ad}^*_{(\frac{\delta H}{\delta u^\flat\otimes\varrho},\frac{\delta H}{\delta a\otimes\varrho})}(u^\flat\otimes\varrho, a\otimes\varrho) - \Phi(\mathbf{S}, \frac{\delta H}{\delta \mathbf{S}}).
    \end{equation}
    Then from Lemma~\ref{lem:momentum_map_matrix_case} we have
    \begin{align*}
        \Phi(\mathbf{S}, \frac{\delta H}{\delta \mathbf{S}})& =
        \Big( -\frac{1}{2}\operatorname{tr}\Big(\mathbf S d \big( \lvert u \rvert^2 + \operatorname{tr}(aa^\top)\big) \Big), -\big( \lvert u \rvert^2 +  \operatorname{tr}(aa^\top)\big)\mathbf S \Big) \\ 
        &= \left(-\frac{1}{2} d \left(|u|^2 + \operatorname{tr}(aa^\top)\right)\otimes\varrho , -\left(|u|^2 + \operatorname{tr}(aa^\top)\right)\mathbf{S}\right),
    \end{align*}
    and a calculation as in Lemma~\ref{lem:ad_star} shows that 
    \begin{equation*}
        \operatorname{ad}^*_{(u,a)}(m\otimes\varrho, \beta\otimes\varrho) = \Big(
            -L_u (m\otimes\varrho) - \operatorname{tr}(\beta d a^\top)\otimes\varrho,
            -L_u (\beta\otimes\varrho)
         \Big).
    \end{equation*}
    Thus,
    \begin{align*}
        \operatorname{ad}^*_{(\frac{\delta H}{\delta u^\flat\otimes \varrho}, \frac{\delta H}{\delta a\otimes\varrho})}(u^\flat\otimes\varrho,a\otimes\varrho) &=  \big( -L_u (u^\flat\otimes\varrho) - \operatorname{tr}(a \, d a^\top), -L_u (a\otimes\varrho) \big) \\
        &  =\big( -L_u (u^\flat\otimes\varrho) -\frac{1}{2} d \operatorname{tr}(aa^\top), -L_u (a\otimes\varrho) \big).
    \end{align*}
    From equation \eqref{eq:ham_eq_abstract_MDens} we then get
    \begin{align*}
        \frac{d}{dt}(u^\flat\otimes\varrho) &= -L_u(u^\flat\otimes\varrho) + \frac{1}{2}d|u|^2 \otimes\varrho  \\
        &= -(\nabla_u u)^\flat \otimes\varrho - u^\flat\otimes L_u\varrho  .
    \end{align*}
    Combined with equation \eqref{eq:rho_evolution_MDens} this yields
    \begin{align*}
        \dot u + \nabla_u u =  2\operatorname{tr}(a S)u.
    \end{align*}
    The term $\operatorname{tr}(a S)$ can be regarded as a Lagrange multiplier for the constraint $\operatorname{tr}(S)=1$ related to our choice of normalization of $\mathbf{S}$.
    
    Likewise, for the $a\otimes\varrho$ component we get
    \begin{align*}
        \frac{d}{dt}(a\otimes\varrho) = -L_u(a\otimes\varrho) + (|u|^2 +  \operatorname{tr}(aa^\top))S\otimes\varrho,
    \end{align*}
    which yields
    \begin{align*}
        \dot a + \nabla_u a = 2\operatorname{tr}(a S)a + (|u|^2 +  \operatorname{tr}(aa^\top))S.
    \end{align*}
    Finally, the evolutions for $S$ and $\varrho$ are obtained from the infinitesimal action formula as derived earlier. 
\end{proof}


\subsection{Balanced matrix transport}\label{sec:matrix-balanced}

While the matrix version intrinsically favors the unbalanced version, one can develop the balanced matrix transport by normalizing the matrix-valued densities and changing the gauge group from $\mathrm{GL}(k)$-currents to $\mathrm{PGL}(k)$-currents.
\medskip

\begin{definition} The space of normalized matrix densities is 
$$\mathrm{MProb}:=\{\mathbf S \in \Omega^n(M, {\rm Sym}_+(k))~|~ \, \int_M \operatorname{tr}\mathbf S=1.
\}$$ 
In other words, the space of densities with values in positive-definite quadratic forms on ${\mathbb R}^k$ of unit trace.
\end{definition}

The semidirect product group acting on this space is
$$
\widehat{PG}={\rm Diff}(M)\ltimes C^\infty(M, {\rm PGL}(k, \mathbb R)).
$$ 
The group action is the composition of the previously defined (unbalanced) action and a pointwise rescaling
as follows: 
For  an element
$$(\varphi, A)\in {\rm Diff}(M)\ltimes C^\infty(M, {\rm PGL}(k, \mathbb R))$$ the action on a matrix density $\mathbf S$ is
$$
(\varphi, A)\cdot \mathbf S = \pi\circ A (\varphi_*\mathbf S) A^\top \,,
$$
where $\varphi_*$ acts on each matrix element thought of as a top-form, $A$ acts by pointwise change of basis in the quadratic form, and the pointwise rescaling $\pi$ takes positive-definite quadratic forms to those of unit trace.
As before, the  action of the group $\widehat{PG}= {\rm Diff}(M) \ltimes C^\infty(M, {\rm PGL}(k, \mathbb R))$  on
matrix densities $\mathrm{MProb}$ of unit total masses is transitive. 
\smallskip

The unbalanced results in Section~\ref{sec:matrix-unbalanced} are readily adapted to the balanced case. 
In particular, the geodesic equations, given by
Theorem~\ref{thm:governing_equations_MDens} in the unbalanced case, now look as follows:

\begin{theorem}\label{thm:governing_equations_MDens_norm}   
    The Hamiltonian equations of motion, corresponding to the reduced geodesic equations on $\widehat{PG}$ for a balanced optimal transport of matrix densities, are
    \begin{align*}
        & \dot{u} + \nabla_{u}{u} = 0, \quad \quad \dot a + \nabla_u a = \lambda I, \quad \operatorname{tr}(aS) = 0, \\ 
        & \dot\rho + \operatorname{div}(\rho u) = 0, \quad \dot{S} + \nabla_u S = a S + Sa^\top \,,
    \end{align*}
    where $\lambda \in \mathbb{R}$ is a Lagrange multiplier for the constraint $\operatorname{tr}(aS)=0$.
\end{theorem}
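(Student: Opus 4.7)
The plan is to adapt the proof of Theorem \ref{thm:governing_equations_MDens} to the balanced case, tracking two structural changes: the gauge Lie algebra descends to $C^\infty(M,\mathfrak{pgl}(k))=C^\infty(M,\mathfrak{gl}(k))/C^\infty(M,\mathbb{R}I)$, and the base is restricted to $\mathrm{MProb}$. Using the factorization $\mathbf{S}=S\otimes\varrho$ with $\operatorname{tr}(S)\equiv 1$ and $\int_M\varrho=1$, the pointwise condition $\operatorname{tr}(aS)=0$ emerges both as the canonical slice in the coset $[a]\in\mathfrak{pgl}(k)$ (since $\operatorname{tr}((a+\lambda I)S)=\operatorname{tr}(aS)+\lambda$) and as the necessary and sufficient condition for the infinitesimal gauge action $\dot S=aS+Sa^\top+\ldots$ to preserve the pointwise normalization $\operatorname{tr}(S)\equiv 1$.

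With this representative in place, three of the four equations fall out by direct substitution from Theorem \ref{thm:governing_equations_MDens}: all the $2\operatorname{tr}(aS)$-forcings vanish, yielding the Burgers equation $\dot u+\nabla_u u=0$, the standard continuity equation $\dot\varrho+\operatorname{div}(\varrho u)=0$, and the chiral-type $\dot S+\nabla_u S=aS+Sa^\top$, while the secondary $2\operatorname{tr}(aS)a$ contribution in the unbalanced $a$-equation also drops.

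The principal task is then to identify the right-hand side of the $a$-equation. I would do this by rerunning the Lie--Poisson reduction that led to Theorem \ref{thm:governing_equations_MDens}, but now on the Lie algebra of $\widehat{PG}$ paired with $\mathrm{MProb}$, using the momentum map of Lemma~\ref{lem:momentum_map_matrix_case} read on the quotient cotangent fiber $\mathfrak{gl}(k)/\mathbb{R}I$. The unbalanced forcing $(|u|^2+\operatorname{tr}(aa^\top))S$, which originated from $\delta H/\delta\mathbf{S}=-\tfrac{1}{2}(|u|^2+\operatorname{tr}(aa^\top))[I]$, now lives in a cotangent space that is itself quotiented pointwise by $\mathbb{R}I$ because of the normalization $\operatorname{tr}(S)\equiv 1$, and so collapses to zero as a class; the resulting PGL-level equation is $[\dot a]+\nabla_u[a]=0$. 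Lifting back to a representative produces the scalar function $\lambda(x)$ such that $\dot a+\nabla_u a=\lambda I$, and $\lambda$ is pinned down by enforcing $\tfrac{d}{dt}\operatorname{tr}(aS)=0$ using the already-derived $\dot S$-equation.

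The main obstacle is this last step: one must verify that the combination of the PGL-gauge quotient and the pointwise $\operatorname{tr}(S)\equiv 1$ normalization truly collapses the entire $(|u|^2+\operatorname{tr}(aa^\top))S$ contribution into a pure $\lambda(x)I$, rather than leaving a trace-free residual. This reduces to showing that the momentum-map image, read modulo the pointwise $\mathbb{R}I$-normal to the constraint, is trivial in the gauge slot, which I expect to follow from a direct pairing computation on the constrained cotangent space, after which determining $\lambda$ is an algebraic consequence of differentiating the constraint in time.
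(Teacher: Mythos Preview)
Your approach is essentially the same as the paper's. The paper's own proof is extremely terse: it fixes the representative $a\in[a]$ by the condition $\operatorname{tr}(a\mathbf S)=0$ (equivalently $\operatorname{tr}(aS)=0$), observes that this choice is unique in each coset, and then simply says ``by the same calculations as in Theorem~\ref{thm:governing_equations_MDens}, but taking the constraint $\operatorname{tr}(aS)=0$ into account, we get the results.'' You follow precisely this route---the same slice, the same substitution into the unbalanced equations---and you go further than the paper by explicitly flagging the one nontrivial step, namely why the unbalanced forcing $(|u|^2+\operatorname{tr}(aa^\top))S$ in the $a$-equation is replaced by a pure $\lambda I$ after passing to the $\mathfrak{pgl}$-quotient, and by noting that $\lambda$ is then fixed by differentiating $\operatorname{tr}(aS)=0$ in time.
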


\begin{proof}
    The Lie algebra element $[a]\in C^\infty(M,\mathfrak{pgl}(k))$ is the coset $[a] = a + \mathbb{R}I$ for $a\in C^\infty(M,\mathfrak{gl}(k))$.
    The action on $\mathbf S$ of a coset $[a]$ with representative $a'\in [a]$ is given by $(a'+cI) \mathbf S + \mathbf S(a'+cI)^\top$, where $c$ is determined by the condition $\operatorname{tr}((a'+cI) \mathbf S + \mathbf S(a'+cI)^\top) = 0$.
    Thus, we can uniquely choose $a := a'+cI \in [a]$ such that $\operatorname{tr}(a\mathbf S) = 0$.
    By the same calculations as in Theorem~\ref{thm:governing_equations_MDens}, but taking the constraint $\operatorname{tr}(aS)=0$ into account, we get the results.
\end{proof}



Note that for the balanced matrix transport we have the same alternative as with the scalar one, namely one can use only one parameter, the total mass, to normalize the density, as in \emph{simple unbalanced transport} \cite{Khesin2024}. 
Above, for $\mathrm{PGL}(k)$-currents, we use the pointwise normalization.

\begin{remark}
 One can also consider a different action of diffeomorphisms on matrix-valued densities for the case $k=n$.
 Namely, if matrix valued densities are defined on the tangent space to $M$ then one can regard elements
 of  ${\rm MDens} := \Omega^n(M, Sym_+(n))$ as metric-valued densities. The action of diffeomorphisms in that case will be not only by multiplication by the Jacobian, but via the action of the change of coordinates on the metric on $M$. The corresponding metric  will involve the Ebin metric on densities and differ from the one described above. This setting is carefully studied in \cite{BauerInPreparation}.
\end{remark}

\medskip



\section{Further ramifications of  transport}\label{sect:questions}


\begin{figure}[ht!]
    \centering
    \textbf{Balanced} \\[2ex]
    \begin{tikzcd}[column sep=normal, row sep=large]
        \mathrm{Diff}(M)\ltimes C^\infty(M,\mathrm{SO}(k)) \arrow[rr, hook]\arrow[d, "{(\varphi, A)\mapsto A(\varphi_*{\mathbf v})}"] 
        & 
        & \mathrm{Diff}(M)\ltimes C^\infty(M, \mathrm{PGL}(k))\arrow[d, "{(\varphi,A)\mapsto A(\varphi_*\mathbf S)A^\top}"] 
        \\
        \mathrm{VHProb} \arrow[rr, "\mathbf v \mapsto \mathbf v \mathbf v^\top"]\arrow[dr, "{\mathbf v \mapsto \lVert \mathbf v \rVert^2}"'] 
        & 
        & \mathrm{MProb} \arrow[dl, "{\mathbf S \mapsto \operatorname{tr}(\mathbf S)}"]
        \\
        & \mathrm{Prob}(M) & 
    \end{tikzcd}
    \\[4ex]
    \textbf{Unbalanced} \\[2ex]
    \begin{tikzcd}[column sep=normal, row sep=large]
        \mathrm{Diff}(M)\ltimes C^\infty(M,\mathrm{Conf}(k)) \arrow[rr, hook]\arrow[d, "{(\varphi, A)\mapsto A(\varphi_*{\mathbf v})}"] 
        & 
        & \mathrm{Diff}(M)\ltimes C^\infty(M, \mathrm{GL}(k))\arrow[d, "{(\varphi,A)\mapsto A(\varphi_*\mathbf S)A^\top}"] 
        \\
        \mathrm{VHDens} \arrow[rr, "\mathbf v \mapsto \mathbf v \mathbf v^\top"]\arrow[dr, "{\mathbf v \mapsto \lVert \mathbf v \rVert^2}"'] 
        & 
        & \mathrm{MDens} \arrow[dl, "{\mathbf S \mapsto \operatorname{tr}(\mathbf S)}"]
        \\
        & \mathrm{Dens}(M) & 
    \end{tikzcd}
    \caption{Commutative diagram for the relation between spaces for scalar, vector, and matrix valued optimal transport, in both balanced (upper) and unbalanced (lower) settings.
    (More precisely, the map from vector densities to matrix ones takes the former to  {\it positive semi-definite} 
    matrices ${\rm Sym}_+(k)$, and hence to the closure of $\rm MDens$, as discussed in Remark \ref{rem:closure}.)}
    \label{fig:universal_diagram}
\end{figure}
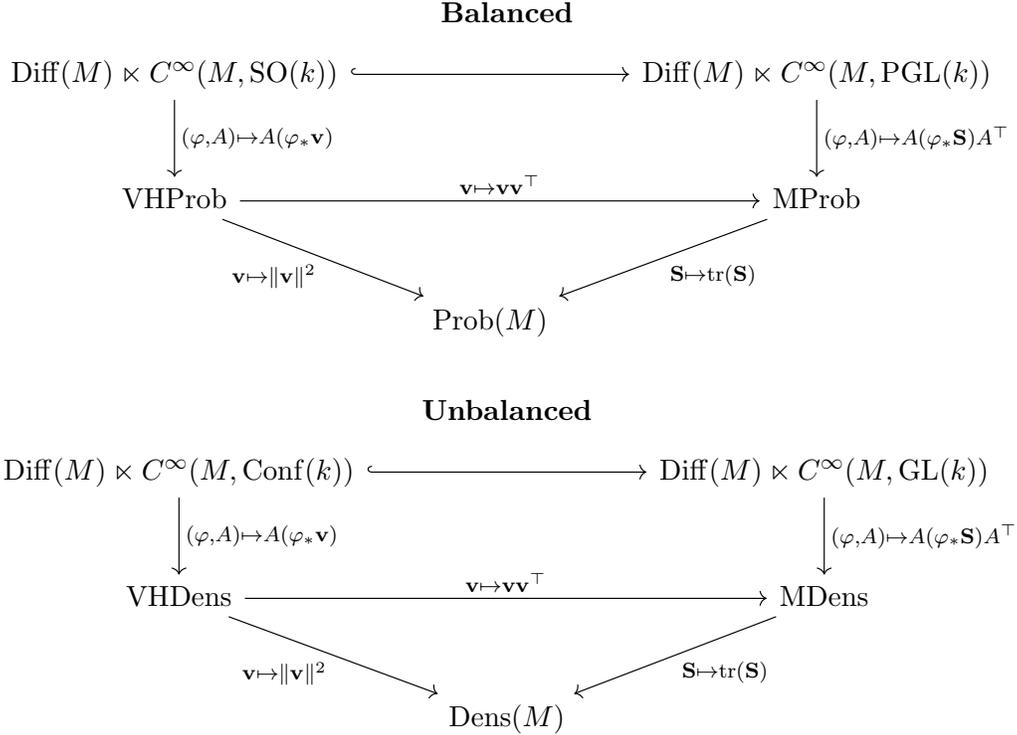


The relations between different parts of this paper are summarized in Figure~\ref{fig:universal_diagram}.
We continue here with several further directions which, in our opinion, would be interesting to explore.

\phantom{hj}
\begin{enumerate}

\item Do there exist interesting finite-dimensional models of the vector optimal transport, extending the action of linear transformations on Gaussians, cf.~\cite{Klas2017}? 

\item  By changing the discrete index (the density coordinates) to a continuous parameter, the velocity $v\in C^\infty(M, \mathbb R^k)$ changes to $v\in C^\infty(M\times \mathbb R, \mathbb R)$, and this covers a new variety of the equations, with only cosmetic changes.
It would be interesting to develop this setting and its applications, as it might be related to generalized flows as suggested by Brenier~\cite{Br1999}.

\item Adding a potential $U$ to the Lagrangian we extend the domain of applicability of transports of vector densities.
Namely, they are now governed not by geodesics, but by the Newton equation. 
The potential can encode various constraints and obstructions for the transport.

\item The groups considered above were related to densities and gauge transformations in a trivial bundle over  an orientable manifold. A similar optimal transport theory should exist for a topologically non-trivial 
bundle, as well as for densities over a non-orientable manifold.

\item Is there a relation to the vector-, 1-form-, or matrix- densities considered in \cite{Brenier2020, Ciosmak2021}?

\item It would be intersting to see if the relativistic Burgers described by Brenier~\cite{Br2022} could be recovered as a particular case of the 
 matrix Burgers equation (or, rather, of a compressible fluid equation) described above.  

\item Consider the Kähler setting, with vector half-densities over complex scalars, which gives a generalized Madelung transform via Fusca's map \cite{Fusca2017}.
It would be interesting to unify those two approaches in matrix transport and K\"ahler vector transport. 
They present essentially the same group action in two similar ways, in the real and complex directions. 
A natural question arises whether there is an analogue of Fusca's momentum map for the unbalanced mass transport. And if so, what are its Riemannian submersion and K\"ahler properties?
\end{enumerate}
\medskip

{\bf More on motivation} 
In the introduction we mentioned  applications of optimal transport of vector and matrix densities to colored medical imaging and quantum spin liquids in physics. 
One more motivation  comes from 
the spectral analysis of multi-variable time series (e.g., \cite{Chen2018}). 
The related spectral content is assessed based on
certain statistics related to moments of the corresponding power spectral density. 
It may drift across frequencies over time as well as across principle directions (e.g., echo locating a moving target using antenna arrays), as captured by vector-valued time series. 
Therefore, in the transport between matrix-valued densities, we consider both 
the cost of shifting power across frequencies and the cost of rotating the corresponding principle axes. 

Another application is related to problems of optimal distribution in economics. Consider for instance the task of optimal distribution of vacation packages by wholesalers: assuming that one can change the length of stays at several resorts between a few standard ones, at a cost (i.e., the personnel's salary) as compared to the set of various packages originally bought by the wholesaler in bulk. Minimization of the corresponding cost becomes a problem of optimal transport of vector densities, where the coordinates are packages of a given length and at a given place. The metrics proposed in this paper are straightforward analogs of the Wasserstein metric, being based on the differential geometry of the problem.





\begin{appendices}

\section{Semi-invariant Hamiltonians for Lie groups}\label{sec:momentum_map_appendix}


The purpose of this appendix is to demonstrate, in an abstract setting, how the momentum map in symplectic geometry naturally appear in the reduced governing equations for Hamiltonian systems on the cotangent bundle of a Lie group.
We start with the definition of the momentum map.

\begin{definition}\label{def:momentum_map}
    Let $(M,\omega)$ be a symplectic manifold and let $G$ be a Lie group acting on $M$ by symplectic diffeomorphisms.     The action is called \emph{Hamiltonian} if there exists a \emph{momentum map}
    $\Phi\colon M\to \mathfrak{g}^*$ defined as follows. Assume that 
    the infinitesimal action for the Lie algebra element $\xi\in\mathfrak{g}$ is a Hamiltonian vector field, i.e. $x\mapsto \xi\cdot x \in T_xM$ is given by a Hamiltonian vector field $X_{H_\xi}$ for some Hamiltonian $H_\xi$ which depends linearly on $\xi$.
    Then the existence of the momentum map  $\Phi$ means a consistent choice of Hamiltonians so that
    \begin{equation*}
        \langle \Phi(x), \xi \rangle := H_\xi(x)
    \end{equation*}
    and $\Phi\colon M\to \mathfrak{g}^*$ is equivariant with respect to the coadjoint action on $\mathfrak{g}^*$.
\end{definition}

The result below, which we use in specific settings throughout the paper, is concerned with the special case of cotangent bundles $M=T^*Q$ with a cotangent lifted action of $G$ originating from an action of $G$ on $Q$.
We recall that cotangent lifted actions are always Hamiltonian, and that the momentum map $\Phi\colon T^*Q \to \mathfrak{g}^*$ in this case is defined by
\begin{equation*}
    \langle \Phi(q, p), \xi \rangle = \langle p,\xi\cdot q\rangle . 
\end{equation*}

\begin{proposition}\label{prop:poisson_reduction_abstract}
    Consider a Lie group $G$ acting on a space $S$ from the left.
    Take a Hamiltonian on $T^*G$ of the form $\bar H(g,m_g) = H(m, g\cdot s_0)$, where $m_g = m \cdot g$, $s_0\in S$, and $s = g\cdot s_0$.
    Then the Poisson reduced motion is given by
    \[
        \dot m + \operatorname{ad}^*_{\xi}m = -\Phi(s,\frac{\delta H}{\delta s}),\qquad \dot s + \xi\cdot s = 0,\qquad \xi = \frac{\delta H}{\delta m}
    \]
    where $\Phi\colon T^*S \to \mathfrak{g}^*$ is the momentum map for the cotangent lifted action of $G$ on $T^*S$.
\end{proposition}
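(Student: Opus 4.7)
The plan is to start from canonical Hamilton's equations on $T^*G$, apply the trivialization $T^*G \cong G \times \mathfrak{g}^*$ specified by $m_g = m\cdot g$, and exploit the special form of $\bar H$ to collapse the remaining $g$-dependence into the advected variable $s$. In this trivialization Hamilton's equations read, schematically,
\begin{equation*}
    \dot g = \Bigl(\tfrac{\delta \bar H}{\delta m}\Bigr)\!\cdot g,
    \qquad
    \dot m + \operatorname{ad}^{*}_{\delta \bar H/\delta m}\, m \;=\; -\bigl(\partial_g \bar H\bigr)_{m\text{ fixed}},
\end{equation*}
where the $\operatorname{ad}^{*}$ term is the usual price for non-invariance of $\bar H$ under the right trivialization, and it reduces to the standard Lie--Poisson bracket whenever $\bar H$ depends on $g$ only through $m$. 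Setting $\xi := \delta H/\delta m$ already gives the third displayed identity of the proposition and the coadjoint part of the $m$-equation.

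Next I would evaluate the leftover partial $(\partial_g\bar H)_m$. Because $\bar H(g,m_g)=H(m,g\cdot s_0)$, the only channel through which $g$ enters at fixed $m$ is via $s = g\cdot s_0$, so for an infinitesimal variation $\delta g = \eta\cdot g$ with $\eta\in\mathfrak g$ the chain rule gives
\begin{equation*}
    \bigl(\partial_g \bar H\bigr)_m(\eta \cdot g) \;=\; \Bigl\langle \tfrac{\delta H}{\delta s},\; \eta \cdot s\Bigr\rangle.
\end{equation*}
By the defining property of the momentum map for the cotangent-lifted $G$-action on $T^*S$, namely $\langle \Phi(s,p),\eta\rangle = \langle p,\eta\cdot s\rangle$, the right-hand side equals $\langle \Phi(s,\delta H/\delta s),\eta\rangle$. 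Therefore the group-derivative contribution packages itself into a single element of $\mathfrak g^*$, and the $m$-equation becomes $\dot m + \operatorname{ad}^{*}_\xi m = -\Phi(s,\delta H/\delta s)$, as claimed.

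The advection equation for $s$ is then automatic from $s(t)=g(t)\cdot s_0$: differentiating and inserting $\dot g = \xi\cdot g$ produces the infinitesimal action of $\xi$ on $s$, which, with the sign convention used in the statement, is exactly $\dot s + \xi\cdot s = 0$. I expect the main obstacle to be entirely notational rather than structural: one has to pin down consistently the left-versus-right trivialization of $T^*G$, the sign convention for the infinitesimal $\mathfrak g$-action on $S$, and the sign of $\operatorname{ad}^{*}$. Once those are fixed, all three equations emerge from a single chain-rule computation, and the geometric content is precisely the fact that the cotangent lift being Hamiltonian is what allows the $\delta H/\delta s$ coupling to be collected, via $\Phi$, into the $\mathfrak g^{*}$-valued forcing on the Lie--Poisson side.
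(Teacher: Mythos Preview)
Your proposal is correct and follows essentially the same route as the paper's proof: start from canonical Hamilton's equations on $T^*G$, trivialize via $m_g = m\cdot g$ so that the $m$-equation picks up the $\operatorname{ad}^*$ term, then compute the residual $g$-derivative of $\bar H$ by the chain rule through $s = g\cdot s_0$ and recognize the result as $\Phi(s,\delta H/\delta s)$ via the defining property of the cotangent momentum map. The paper's proof is terser (it cites the trivialized equations as ``standard calculations'' and does not write out the $\dot s$ step), but the logical skeleton is identical; your explicit acknowledgment that the only delicate point is the left/right and sign bookkeeping is apt.
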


\begin{proof}
    The canonical form of the equations are
    \begin{align*}
        \dot g &= \frac{\delta \bar H}{\delta m_g}, \\
        \dot m_g &= -\frac{\delta \bar H}{\delta g}.
    \end{align*}
    By standard calculations we have 
    \begin{equation*}
        \dot m_g = (\dot m + \operatorname{ad}^*_\xi m)\cdot g \qquad\text{and}\qquad 
        \dot g = \frac{\delta H}{\delta m} g .
    \end{equation*}
    Let $g_\epsilon = \operatorname{exp}(\epsilon\xi)g$ be a variation. 
    Then 
    \begin{equation*}
        \frac{d}{d\epsilon} H(m,g_\epsilon\cdot s_0) = \langle \frac{\delta H}{\delta s}, \xi\cdot s \rangle = \langle \Phi(s,\frac{\delta H}{\delta s}),\xi\rangle
        = \langle \Phi(s,\frac{\delta H}{\delta s})\cdot g,\xi\cdot g\rangle
    \end{equation*}
    Thus, 
    \begin{equation*}
        \frac{\delta \bar H}{\delta g} = \Phi(s,\frac{\delta H}{\delta s})\cdot g \,,
    \end{equation*}
    which proves the result.
\end{proof}


\section{An alternative metric on matrix densities}\label{sect:alternative}

In addition to \emph{extending} optimal transport from scalar to matrix valued densities, there is another approach to matrices by \emph{restricting} optimal transport on $\mathbb{R}^n$ to multivariate Gaussian densities (represented by positive definite matrices) and linear transport maps (cf.~\cite{Klas2017} and references therein).
As was suggested by F.-X. Vialard \cite{Via}, this approach provides the following alternative to the Bures-type  $L^2$-metric \eqref{eq:metric_Bures_MDens} on the group $\widehat G=  \operatorname{Diff}(M)\ltimes C^\infty(M,\operatorname{GL}(k))\ni (\varphi,A)$, which is consistent with its finite-dimensional analog:
\begin{equation}\label{eq:metric_gaussian}
    \begin{aligned}
        & \langle (\dot\varphi,\dot A), (\dot\varphi,\dot A)\rangle_{(\varphi,A)}
        = \int_M \left(|\dot\varphi\circ\varphi^{-1}|^2 \operatorname{tr}(\underbrace{A\varphi_*\mathbf{S}_0A^\top}_{\mathbf S})
        + \operatorname{tr}(\dot A \varphi_*\mathbf{S}_0 \dot A^\top)
        \right) \\
        & = \int_M \left(|u|^2 \operatorname{tr}(\mathbf{S})
        + \operatorname{tr}(\dot A A^{-1}\underbrace{A\varphi_*\mathbf{S}_0A^\top}_{\mathbf S} (\dot A A^{-1})^\top)
        \right) \\
        & = \int_M \left(|u|^2 \operatorname{tr}(\mathbf{S})
        + \operatorname{tr}(a \mathbf{S} a^\top)
        \right)\,,
    \end{aligned}
\end{equation}
where $(u,a)\in \mathfrak{X}(M)\ltimes C^\infty(M,\mathfrak{gl}(k))$ are the reduced coordinates, $u = \dot\varphi\circ\varphi^{-1}$ and $a = \dot A A^{-1}$, while $\mathbf S_0 \in \mathrm{MDens}$ is a  fixed initial matrix density.



For this metric, the Legendre transform under the $L^2$-Frobenius pairing is given by
\begin{equation}
    (u,a) \mapsto \left( u^\flat\otimes \operatorname{tr}(\mathbf S), [a\mathbf S] \right).
\end{equation}
In turn, this gives the Hamiltonian on $\widehat{\mathfrak{g}}^*\times \mathrm{MDens}$ as

\begin{equation}\label{eq:Hamiltonian_gaussian}
    \widehat{H}(u^\flat\otimes \operatorname{tr}(\mathbf S), [a\mathbf S], \mathbf S) = \frac{1}{2}\int_M \left(\frac{|u^\flat\otimes\operatorname{tr}(\mathbf S)|^2 }{\operatorname{tr}(\mathbf{S})}
        + \operatorname{tr}( (a \mathbf{S}) \mathbf{S}^{-1}(a \mathbf{S})^\top)
        \right).
\end{equation}

\begin{proposition}\label{thm:governing_equations_gaussian}   
    The Hamiltonian equations of motion, corresponding to the reduced geodesic equations on $\widehat{G}$, are
    \begin{align*}
        &
        \dot u + \nabla_u u = 2\operatorname{tr}(aS) - \operatorname{tr}\big([a,S]\nabla a^\top\big),
        \\ & \dot{\mathbf S} + L_u \mathbf S = a\mathbf S + \mathbf S a^\top,
    \end{align*}
    where, as before, we use the decomposition $\mathbf S = S\otimes\varrho = S \otimes \operatorname{tr}\mathbf S$.
\end{proposition}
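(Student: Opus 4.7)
My plan is to specialize Proposition~\ref{prop:poisson_reduction_abstract} to the reduced Hamiltonian \eqref{eq:Hamiltonian_gaussian}, following the same strategy as the proof of Theorem~\ref{thm:governing_equations_MDens}, but now with the $\mathbf S$-dependent Legendre transform $(u,a)\mapsto (u^\flat\otimes\operatorname{tr}\mathbf S,[a\mathbf S])$ induced by the metric~\eqref{eq:metric_gaussian}. The dynamics on $\widehat{\mathfrak g}^*\times \mathrm{MDens}$ will come from the general formula
\[
    \tfrac{d}{dt}(u^\flat\otimes\operatorname{tr}\mathbf S, [a\mathbf S])
    = \operatorname{ad}^*_{(u,a)}(u^\flat\otimes\operatorname{tr}\mathbf S, [a\mathbf S]) - \Phi\bigl(\mathbf S,\tfrac{\delta \widehat H}{\delta \mathbf S}\bigr),\qquad
    \dot{\mathbf S} = -L_u\mathbf S + a\mathbf S + \mathbf S a^\top.
\]

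\emph{Step 1 (variational derivatives).} Writing $\widehat H = \tfrac12\langle u^\flat\otimes\operatorname{tr}\mathbf S,u\rangle + \tfrac12\langle [a\mathbf S],a\rangle$ and regarding $u,a$ as functions of the momenta and $\mathbf S$, I would immediately read off $\delta\widehat H/\delta(u^\flat\otimes\operatorname{tr}\mathbf S)=u$ and $\delta\widehat H/\delta[a\mathbf S]=a$. The co-state $\delta\widehat H/\delta\mathbf S$ is more delicate: holding the momenta fixed, the first term contributes $-\tfrac12|u|^2 I$ (from differentiating $1/\operatorname{tr}\mathbf S$) and the second contributes a term proportional to $-a^\top a$ (from the inverse $\mathbf S^{-1}$ inside $\operatorname{tr}((a\mathbf S)\mathbf S^{-1}(a\mathbf S)^\top)$). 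The result is a symmetric matrix-valued expression of the form $-\tfrac12(|u|^2 I + a^\top a)$, interpreted modulo the $\mathfrak{gl}(k)/\mathfrak{so}(k)$ identification of $T^*_\mathbf{S}\mathrm{MDens}$.

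\emph{Step 2 (momentum map and $\operatorname{ad}^*$).} By Lemma~\ref{lem:momentum_map_matrix_case}, $\Phi(\mathbf S,\delta\widehat H/\delta\mathbf S)$ splits into a $\mathfrak g^*$-component of the form $\operatorname{tr}(\mathbf S\,dP)$ and an $\mathfrak n^*$-component $(P+P^\top)\mathbf S$ with $P$ the co-state from Step~1. For the coadjoint action I would apply Lemma~\ref{lem:ad_star} (extended to $\mathfrak{gl}(k)$-currents) to compute $\operatorname{ad}^*_{(u,a)}$ on the pair $(u^\flat\otimes\operatorname{tr}\mathbf S,[a\mathbf S])$; this produces the usual advective terms $-L_u(u^\flat\otimes\operatorname{tr}\mathbf S)$ and $-L_u[a\mathbf S]$, together with a pairing term of the form $-\operatorname{tr}(a\,d[a\mathbf S]^\top)$ coming from the semidirect-product bracket.

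\emph{Step 3 (assembly).} Feeding the previous computations into the Hamilton equations yields coupled PDEs for the momenta $u^\flat\otimes\operatorname{tr}\mathbf S$ and $[a\mathbf S]$. Decomposing $\mathbf S = S\otimes\varrho$ with $\operatorname{tr} S=1$, the trace of the $\mathbf S$-equation gives $\dot\varrho + L_u\varrho = 2\operatorname{tr}(aS)\varrho$, as in equation~\eqref{eq:rho_evolution_MDens}. Using this together with the identity $(\nabla_u u)^\flat = L_u u^\flat - \tfrac12 d|u|^2$ and the Leibniz rule separates the $\varrho$-evolution from $u$, yielding the advective equation for $u$. The equation $\dot{\mathbf S} + L_u\mathbf S = a\mathbf S + \mathbf S a^\top$ falls out directly from the $\dot s +\xi\cdot s=0$ part of Proposition~\ref{prop:poisson_reduction_abstract}.

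\emph{Main obstacle.} The difficulty concentrates in two places. First, differentiating $\widehat H$ with respect to $\mathbf S$ at fixed momenta requires regarding $a$ as a function of $[a\mathbf S]$ and $\mathbf S$; an error here would misidentify the co-state and propagate through $\Phi$. Second, the commutator $\operatorname{tr}([a,S]\nabla a^\top)$ appearing in the $u$-equation is not manifest in the raw output of $\operatorname{ad}^*$ and $\Phi$: it arises only after combining the term $-\operatorname{tr}(a\,da^\top)$ from $\operatorname{ad}^*$ with the gradient of $\operatorname{tr}(a^\top a\,\mathbf S)$ from $\Phi$, dividing through by $\varrho$, and using the cyclic property of the trace to rewrite the antisymmetric part of the resulting $\nabla a$--$\nabla S$ combination as the stated commutator. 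This reorganization is the step most likely to hide a sign or factor error and deserves the most care.
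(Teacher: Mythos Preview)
Your plan matches the paper's proof: compute $\delta\widehat H/\delta\mathbf S = -\tfrac12(|u|^2 I + a^\top a)$, feed it together with the $\mathfrak{gl}(k)$ analogue of Lemma~\ref{lem:ad_star} for $\operatorname{ad}^*$ and Lemma~\ref{lem:momentum_map_matrix_case} for $\Phi$ into Proposition~\ref{prop:poisson_reduction_abstract}, and then strip off the $\varrho$-evolution via~\eqref{eq:rho_evolution_MDens} to isolate the $u$-equation. One caution on your Step~2 and the obstacle paragraph: the semidirect-product pairing term from $\operatorname{ad}^*$ is $-\operatorname{tr}(a\mathbf S\,da^\top)$ (the momentum $\beta=a\mathbf S$ sits in the first slot, not $a$), and the commutator $[a,S]$ falls out directly from the difference of this with the $\Phi$-contribution $\tfrac12\operatorname{tr}\bigl((a\mathbf S+\mathbf S a)\,da^\top\bigr)$---no $\nabla S$ terms ever appear.
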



Note that if $a$ is skew-symmetric then all the terms in the right-hand side of the equation for $\dot u$ vanish. This reflects that the alterative metric~\eqref{eq:metric_gaussian} on $\mathrm{MDens}$ descends to the vector half-density metric~\eqref{eq:Bures_metric_vectors}.


\begin{proof}
    The variational derivatives of the Hamiltonian are
    \begin{multline*}
        \frac{\delta \widehat H}{\delta (u^\flat\otimes\operatorname{tr}(\mathbf S))} = u, \quad 
        \frac{\delta \widehat H}{\delta (a\mathbf S)} = a, \\ 
        \frac{\delta \widehat H}{\delta \mathbf{S}} = -\frac{1}{2}\big( \lvert u \rvert^2 I + \operatorname{tr}(\mathbf S^{-1} (a\mathbf S)^\top (a\mathbf{S})\mathbf S^{-1}\delta\mathbf S )\big) =  -\frac{1}{2}\big( \lvert u \rvert^2 I + a^\top a \big)\,,
    \end{multline*}
    where $I \in T^*_{\mathbf{S}}\mathrm{MDens}$ is the identity matrix. 
    Next, the general form of the corresponding Hamiltonian equations is obtained in Proposition~\ref{prop:poisson_reduction_abstract} in Appendix, namely
    \begin{equation}\label{eq:ham_eq_abstract_gaussian}
        \frac{d}{dt}(u^\flat\otimes\operatorname{tr}(\mathbf S), a\mathbf S) = \operatorname{ad}^*_{(\frac{\delta H}{\delta u^\flat\otimes\operatorname{tr}(\mathbf S)},\frac{\delta H}{\delta a \mathbf S})}(u^\flat\otimes\operatorname{tr}(\mathbf S), a\mathbf S) - \Phi(\mathbf{S}, \frac{\delta H}{\delta \mathbf{S}}).
    \end{equation}
    Then from Lemma~\ref{lem:momentum_map_matrix_case} we have
    \begin{align*}
        &\Phi(\mathbf{S}, \frac{\delta H}{\delta \mathbf{S}}) =
        \Big( -\frac{1}{2}\operatorname{tr}\Big(\mathbf S d \big( \lvert u \rvert^2 + a^\top a \big) \Big), -\big( \lvert u \rvert^2 +  a^\top a\big)\mathbf S \Big) 
        \\ 
        &= \Big( - \frac{1}{2}d \lvert u \rvert^2\otimes \operatorname{tr}(\mathbf S) -\frac{1}{2}\operatorname{tr}\big((a\mathbf S + \mathbf S a) d a^\top  \big), -\big( \lvert u \rvert^2 +  a^\top a\big)\mathbf S \Big) ,
    \end{align*}
    and a calculation as in Lemma~\ref{lem:ad_star} shows that 
    \begin{equation*}
        \operatorname{ad}^*_{(u,a)}(\boldsymbol{m}, \boldsymbol{\beta}) = \Big(
            -L_u (\boldsymbol{m}) - \operatorname{tr}(\boldsymbol{\beta} d a^\top),
            -L_u (\boldsymbol{\beta})
         \Big).
    \end{equation*}
    Thus,
    \begin{align*}
        &\operatorname{ad}^*_{(\frac{\delta H}{\delta u^\flat\otimes \operatorname{tr}(\mathbf S)}, \frac{\delta H}{\delta a\otimes\operatorname{tr}(\mathbf S)})}(u^\flat\otimes\operatorname{tr}(\mathbf S),a\mathbf S) =  \big( -L_u (u^\flat\otimes\operatorname{tr}(\mathbf S)) - \operatorname{tr}(a\mathbf S \, d a^\top), -L_u (a\mathbf S) \big). 
    \end{align*}
    From equation \eqref{eq:ham_eq_abstract_gaussian} we then get
    \begin{align*}
        \frac{d}{dt}(u^\flat\otimes\operatorname{tr}(\mathbf S)) &= -L_u(u^\flat\otimes\operatorname{tr}(\mathbf S)) + \frac{1}{2}d|u|^2 \otimes\operatorname{tr}(\mathbf S)
        -\frac{1}{2}\operatorname{tr}\big((a\mathbf S - \mathbf S a) d a^\top  \big)
        .        
    \end{align*}
    Likewise, for the $a\mathbf S$ component we obtain
    \begin{align*}
        \frac{d}{dt}(a\mathbf S) = -L_u(a\mathbf S) + (|u|^2 + a^\top a)\mathbf S.
    \end{align*}
    Furthermore, the transported variables are
    \begin{equation*}
        \dot{\mathbf S} = -L_u \mathbf S + a\mathbf S + \mathbf S a^\top.
    \end{equation*}
    Thus, the trace density evolves as
    \begin{equation*}
        \frac{d}{dt}\operatorname{tr}(\mathbf S) = -L_u \operatorname{tr}(\mathbf S) + 2\operatorname{tr}(a\mathbf S).
    \end{equation*}
    Moreover, the evolution of the vector field $u$ is given by
    \begin{equation*}
        \dot u + \nabla_u u = 2 \frac{\operatorname{tr}(a \mathbf S) u}{\operatorname{tr}(\mathbf S)} - \frac{1}{\operatorname{tr}(\mathbf S)} \operatorname{tr}\big((a\mathbf S - \mathbf S a) \nabla a^\top  \big).
    \end{equation*}
    In the coordinates for matrix density $\mathbf S = S\otimes\varrho$ as before, this gives
    \begin{equation*}
        \dot u + \nabla_u u = 2\operatorname{tr}(aS) - \operatorname{tr}\big([a,S]\nabla a^\top\big) .
    \end{equation*}
    Notice that the right-hand side term vanishes when $a$ is skew-symmetric, since $\operatorname{tr}(aSb) = \operatorname{tr}(Sab)$ whenever $S$ is symmetric and $a,b$ are skew-symmetric.
\end{proof}



\end{appendices}

\bibliographystyle{abbrv}
\bibliography{references}

\end{document}